\newcommand{\R}{\mathbb{R}}
\newcommand{\N}{\mathbb{N}}
\newcommand{\Sing}{\mathrm{Sing}}
\newcommand{\mesh}{\mathrm{mesh}}
\newtheorem{thm}{Theorem}[section]
\newtheorem*{thm*}{Theorem.}
\newtheorem*{thma}{Theorem A}
\newtheorem*{thmb}{Theorem B}
\newtheorem*{thmc}{Theorem C}
\newtheorem{proposition}{Proposition}[subsection]
\newtheorem{cor}{Corollary}[subsection]
\newtheorem{lemma}{Lemma}[subsection]
\newtheorem{definition}{Definition}[subsection]
\newtheorem{remark}{Remark}[subsection]
\newcommand{\kei}[1]{{k(I)}}
\newcommand{\I}{\mathcal{I}}
\newcommand{\norm}[1]{\Vert #1 \Vert}
\newcommand{\D}{\mathrm{D}}
\newcommand{\be}{\begin{equation}}
\newcommand{\ee}{\end{equation}}
\newcommand{\bes}{\begin{equation*}}
\newcommand{\ees}{\end{equation*}}
\title[$\mathcal{C}^{1+\alpha}$-regularity of conjugacies of linearizable GIETs]{Regularity of conjugacies of linearizable generalized interval exchange transformations}
\author{Selim Ghazouani}
\author{Corinna Ulcigrai}
\begin{document}

\begin{abstract}
 We consider generalized interval exchange transformations (GIETs) of $d\geq 2$ intervals which are \emph{linearizable}, i.e.~differentiably conjugated to standard interval exchange maps (IETs) via a diffeomorphism $h$ of $[0,1]$ and study the regularity of the conjugacy $h$. Using a renormalisation operator obtained accelerating Rauzy-Veech induction,  we show that, under a full measure condition on the IET obtained by linearization, if the orbit of the GIET under renormalisation converges exponentially fast in a $\mathcal{C}^2$ distance to the subspace of IETs, there exists an exponent $0<\alpha<1$ such that $h$ is $\mathcal{C}^{1+\alpha}$. Combined with the results proved by the authors in \cite{SU}, this implies  in particular the following improvement of the rigidity result in genus two proved in \cite{SU} (from  $\mathcal{C}^1$  to  $\mathcal{C}^{1+\alpha}$ rigidity): 
for almost every irreducible IET $T_0 $ with $d=4$ or $d=5$, for any GIET which is topologically conjugate to $T_0$ via a homeomorphism $h$ and has vanishing boundary, the topological conjugacy $h$ is actually a $\mathcal{C}^{1+\alpha}$ diffeomorphism, i.e.~a diffeomorphism $h$ with derivative $Dh$ which is $\alpha$-H{\"older} continuous.
\end{abstract}


\maketitle 

\tableofcontents

\section{Introduction and main results}

\subsection{Linearization  of GIETs  and rigidity}
We pursue in this article the investigation of the regularity of conjugating maps between smooth generalised interval exchange transformations (GIETs). Generalised interval exchange transformations appear naturally as first-return maps of flows on surfaces, and are thus seen as natural generalisations of circle diffeomorphisms to higher genus. The study of circle diffeomorphisms is a classical topic in dynamical systems, initiated by Poincar{\'e}'s invention of the rotation number followed by Denjoy's important distortion estimates and Arnol'd's introduction of KAM methods to the topic. The theory culminated with Herman's spectacular treaty \cite{Herman} establishing (amongst other things) the regularity of the map conjugating most minimal circle diffeomorphisms to their linear model. 

 Efforts to extend these results to the higher genus case (and thus GIETs) have been ongoing since the early Eighties, 
see in particular the seminal works by Forni \cite{Fo:ann} and Marmi, Moussa and Yoccoz \cite{MMY, MMY2, MMY3};
 we refer the reader to the article \cite{SU} for more references and a detailed discussion about rigidity questions for GIETs. 

 In \cite{SU}, it is proven that under a generic arithmetic condition, genus $2$ minimal GIETs with vanishing boundary are $\mathcal{C}^1$-conjugate to their linear model. The proof follows a general theme in one-dimensional dynamics: we show that  the orbits of such GIETs under a suitable renormalisation operator converge (at an exponential rate) to their linear model and derive the regularity of the conjugacy from this fact. In many other places in one-dimensional dynamics it is shown that \textit{exponential convergence} of renormalisation actually imply that the conjugacy is of class $\mathcal{C}^{1+ \alpha}$. In the present article we extend this implication to the case of GIETs (see Theorem \ref{thmb} for a precise statement), thus improving upon the main result of \cite{SU}.  
\vspace{2mm}


\subsection{Regularity of conjugacies and foliations in genus two}\label{sec:GIETrigidityintro}
 Let us denote denote by $\mathcal{I}_d$, for a fixed $d\geq 2$, the space of standard irreducible interval exchange transformations with $d$ branches (see \S~\ref{giet} for the definition of irreducible). The space $\mathcal{I}_d$  carries a natural Lebesgue measure (see \S~\ref{giet}). 
We prove the following rigidity result, which improves on the rigidity result previously proved in \cite{SU}. 
\begin{thma}[$\mathcal{C}^{1+\alpha}$-rigidity of GIETs with $d=4$ or $d=5$]\label{thma}
Let $d=4$ or $5$. For Lebesgue almost every\footnote{{Here \label{Lebfootnote} the measure is the Lebesgue measure on the parameter \emph{standard} IETs $\mathcal{I}_d$, i.e.~a result holds for a full measure set of IETs in $\mathcal{I}_d$, if it holds for all \emph{irreducible} combinatorial data and Lebesgue-almost every choice of \emph{lengths} of the continuity intervals. See \S~\ref{giet} for details.}}  interval exchange transformation $T_0$ in $\mathcal{I}_4 \cup \mathcal{I}_5$ the following holds. Given   $T$ any  $\mathcal{C}^3$-generalized interval exchange map $T$ whose boundary $\mathcal{B}(T)$ vanishes, if $T$ is topologically conjugate to $T_0 $, then there exists  $0<\alpha = \alpha(T_0)<1$ such that  the conjugacy between $T$ and $T_0$ is actually a diffeomorphism of $[0,1]$ of class $\mathcal{C}^{1+\alpha}$. 
\end{thma}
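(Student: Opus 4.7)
The plan is to deduce Theorem A as a corollary by combining the $\mathcal{C}^1$-rigidity theorem of \cite{SU} with Theorem B of the present paper. I would take the exceptional set of $T_0$ to be the intersection of the two full-measure subsets of $\mathcal{I}_4\cup\mathcal{I}_5$ required respectively by \cite{SU} and by Theorem B; this intersection is itself of full Lebesgue measure in $\mathcal{I}_4\cup\mathcal{I}_5$. Fix such a $T_0$, and fix a $\mathcal{C}^3$ GIET $T$ with vanishing boundary which is topologically conjugate to $T_0$ via a homeomorphism $h$.

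First I would invoke the main technical output of \cite{SU}: for this class of $T_0$ and $T$, it is shown there that the orbit of $T$ under the accelerated Rauzy--Veech renormalisation operator converges at an exponential rate to the subspace $\mathcal{I}_d$ of standard IETs. Indeed, the $\mathcal{C}^1$-rigidity statement in \cite{SU} is proved precisely by first establishing this exponential convergence and then integrating it to obtain $\mathcal{C}^1$-regularity of $h$. The key point to verify is that this exponential convergence holds not merely in a $\mathcal{C}^1$ sense but in the $\mathcal{C}^2$ distance required by the hypothesis of Theorem B.

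To upgrade to $\mathcal{C}^2$-convergence I would combine two ingredients: (i) the exponential decay of affine distortion and of log-slopes established in \cite{SU} under the vanishing boundary assumption, and (ii) uniform distortion bounds for the second derivative of compositions of inverse branches, which follow from the $\mathcal{C}^3$ regularity hypothesis on $T$ via standard Schwarzian-type estimates. Together these give exponential decay of the $\mathcal{C}^2$ distance between the renormalised orbit of $T$ and the IET subspace. Once this is in place, Theorem B applies directly to $T$ and yields the existence of $0<\alpha=\alpha(T_0)<1$ for which $h$ is a $\mathcal{C}^{1+\alpha}$ diffeomorphism of $[0,1]$, which is the content of Theorem A.

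I expect the main obstacle to lie in step (ii), namely checking that the exponential convergence produced by \cite{SU} can be promoted from the natural $\mathcal{C}^1$ (or log-derivative) setting in which it is formulated there to genuine $\mathcal{C}^2$-exponential convergence, with constants uniform enough to be fed into Theorem B. This is essentially the only place where the $\mathcal{C}^3$ regularity assumption on $T$ is used in a crucial way, since it is what guarantees the requisite bounded-distortion control on second derivatives through the renormalisation tower.
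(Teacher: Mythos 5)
Your proposal follows the same route as the paper: Theorem A is obtained by combining the result of \cite{SU} (recorded here as Theorem C) with Theorem B, applied to the intersection of the two full measure sets of IETs in $\mathcal{I}_4\cup\mathcal{I}_5$. The only divergence is your step (ii): the upgrade from $\mathcal{C}^1$ to $\mathcal{C}^2$ exponential convergence that you flag as the main obstacle is not needed, since the statement proved in \cite{SU} (see \eqref{exp:conv} in Theorem C) already gives exponential convergence of $(\mathcal{R}^m(T))_m$ to $\mathcal{I}_d$ in the $\mathcal{C}^2$ distance, together with the $\mathcal{C}^1$ differentiability of the conjugacy; so your sketched Schwarzian-type argument, which as written is only a heuristic, can simply be dropped and replaced by a direct citation.
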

The existence of a  diffeomorphism $h $ of $[0,1]$ of class $\mathcal{C}^{1}$ which conjugates  $T $ and $T_0$ under the same assumptions of the theorem was one of the main results proved by the authors in \cite{SU}. The novel part of this result is that $h$ is actually $\mathcal{C}^{1+\alpha}$.


\smallskip 
\noindent{\it Optimal regularity.} 
Contrary to the theory of circle diffeomorphisms (where a $\mathcal{C}^{\infty}$ circle diffeo which is topologically conjugate to a rotation, is actually smoothly conjugate by a $\mathcal{C}^{\infty}$ conjugacy)  
here the conjugacy is expected to typically fail\footnote{This corresponds to Forni's and Marmi-Moussa-Yoccoz  non-trivial obstructions to solving the cohomological equation: Marmi-Moussa-Yoccoz have indeed shown that asking for more regular conjugacy forces GIETs to live in positive codimension submanifolds of the $\mathcal{C}^1$-conjugacy class; the codimension is an exact reflection of the aforementioned obstruction. } to be $\mathcal{C}^2$, so this result is expected to be optimal (for a full measure set of IETs).  
This result indicates therefore that GIETs are closer (as far as the regularity of the conjugating map is concerned)  to essentially non-linear rigid dynamical systems (such as unimodal maps and circle map with breaks or critical points) for which the conjugacy is typically not $\mathcal{C}^2$ but $\mathcal{C}^{1+\alpha}$ for some  $0<\alpha<1$.

\smallskip 
\noindent{\it The exponent $\alpha$.} It turns out that the exponent $\alpha(T_0) > 0$ can be shown to be independent of $T_0$ for $T_0$ in a set of full measure. It is somewhat obvious from the proof: as many results of this kind, the exponent $\alpha$ depends only upon the exponential speed of convergence of renormalisation. The optimal value of $\alpha$ for GIETs remains completely open.

\smallskip 
\noindent{\it The boundary invariant.}
The \emph{boundary} $\mathcal{B}(T)$ of $T$ in the statement is a $\mathcal{C}^1$-conjugacy invariant associated to a GIET $T$ (for the definition of  $\mathcal{B}$, which is based on Marmi-Moussa-Yoccoz \emph{boundary} operator from \cite{MMY3, MarmiYoccoz}, see \cite{SU}). Requiring that $\mathcal{B}(T)$ vanish is a necessary condition:
 two GIET that are topologically conjugate but have different boundaries cannot be differentiably conjugate, simply because the boundary is $\mathcal{C}^1$-conjugacy invariant. We note, for the reader who is familiar with the one-dimensional dynamics literature, that the assumption that $\mathcal{B}(T)$ be equal to zero, in the special case where $T$ is a circle maps with breaks, reduces to the assumption that the \emph{non-linearity} $\eta_T$ (see \S~\ref{sec:nl}) has integral zero and that the special pair $(T_1,T_2)$, where $T_1,T_2$ are the two branches of $T$, corresponds to a diffeomorphism without break points (see \cite{SU} for details). 

Geometrically, when $T$ is the Poincar{\'e} map of a minimal foliation on a surface, the boundary $\mathcal{B}(T)$ encodes the $\mathcal{C}^1$-holonomy around the saddles of the foliation (see \cite{SU}). The assumption that $\mathcal{B}(T)$ be zero is equivalent to {asking that the corresponding foliation have trivial $\mathcal{C}^1$-holomony around the singularities (see \cite{SU} for details). 
Using this observation, 
 one can deduce from Theorem~\ref{thma} (as in \cite{SU}, see in particular \S~6.4.3) the following consequence for foliations on surfaces of genus two.} 

\begin{cor}[Foliations $\mathcal{C}^{1+\alpha}$-rigidity in genus two]\label{cor:foliationsrigidity}
Let $S$ be a closed orientable surface of genus $2$. There exists  a full measure set $\mathcal{D}$ of orientable measured foliations on $S$ such that, if 
 $\mathcal{F}$ a  is a {minimal} {orientable} foliation on $S$ of class $\mathcal{C}^3$ such that:
\begin{itemize}
\item[(i)] $\mathcal{F}$ is topologically conjugate to a measured foliation $\mathcal{F}_0$ beloning to the full measure set $\mathcal{D}$;
\item[(ii)] the $\mathcal{C}^1$-holonomies of $\mathcal{F}$ at all singularities  vanish;
\end{itemize}
\noindent then there exists $0<\alpha<1$ such that $\mathcal{F}$ is actually $\mathcal{C}^{1+\alpha}$-conjugate to $\mathcal{F}_0$.
\end{cor}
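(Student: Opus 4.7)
The plan is to reduce the statement for foliations on $S$ to the one-dimensional result already established in Theorem A, by passing to Poincar\'e return maps on a transverse arc, and then to extend the resulting conjugacy back to the surface along the leaves.

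First, I would fix a smoothly embedded arc $\Sigma_0 \subset S$ transverse to $\mathcal{F}_0$ and meeting every leaf, and identify $\Sigma_0$ with $[0,1]$ so that the first return map becomes a standard IET $T_0 \in \mathcal{I}_4$ (this is the standard genus-two picture, as recalled in \cite{SU}). The full measure set $\mathcal{D} \subset$ (orientable measured foliations on $S$) is defined as the set of $\mathcal{F}_0$ for which $T_0$ lies in the full measure set of IETs provided by Theorem A; that this is a full measure set follows from the standard identification between strata of abelian differentials in genus two and the Rauzy classes of $\mathcal{I}_4$, together with the fact that Masur--Veech measure disintegrates into Lebesgue measure on the length parameters. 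Hypothesis (i) then gives, via the topological conjugacy $\Phi \colon S \to S$ between $\mathcal{F}$ and $\mathcal{F}_0$, a transverse arc $\Sigma := \Phi^{-1}(\Sigma_0)$ for $\mathcal{F}$; the Poincar\'e return map of $\mathcal{F}$ to $\Sigma$ (reparametrized by arclength along $\Sigma$) is a $\mathcal{C}^3$-GIET $T$ topologically conjugate to $T_0$.

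Next I would invoke the dictionary between the $\mathcal{C}^1$-holonomies of $\mathcal{F}$ around its singularities and the boundary invariant $\mathcal{B}(T)$ of the Poincar\'e return map $T$, which is the content of \cite{SU} (compare the discussion following the statement of Theorem A): assumption (ii) translates exactly into $\mathcal{B}(T) = 0$. Theorem A then yields $0 < \alpha < 1$ and a diffeomorphism $h\colon \Sigma \to \Sigma_0$ of class $\mathcal{C}^{1+\alpha}$ conjugating $T$ to $T_0$.

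It then remains to upgrade $h$ to a $\mathcal{C}^{1+\alpha}$ conjugacy $H\colon S \to S$ between the foliations themselves. The natural definition is to transport $h$ along leaves: for $p \in S \setminus \mathrm{Sing}(\mathcal{F})$, one flows backwards along the leaf of $\mathcal{F}$ through $p$ until one hits $\Sigma$ at a point $q$, applies $h$, and then flows forwards along the corresponding leaf of $\mathcal{F}_0$ for the matching time; since $\mathcal{F}$ and $\mathcal{F}_0$ are $\mathcal{C}^3$ and the flow maps off the singular set are smooth, the map $H$ is $\mathcal{C}^{1+\alpha}$ on the complement of the separatrices (with the same exponent as $h$, composition with smooth maps preserving the H\"older regularity of the derivative). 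The step that I expect to be the main obstacle is extending $H$ across the singular leaves and singular points with the $\mathcal{C}^{1+\alpha}$ regularity intact: one must use that the $\mathcal{C}^1$-holonomy of both $\mathcal{F}$ and $\mathcal{F}_0$ around each saddle vanishes (for $\mathcal{F}$ by hypothesis (ii), for $\mathcal{F}_0$ because it is smoothly conjugate to the translation foliation of an abelian differential) to linearize each singularity in $\mathcal{C}^{1+\alpha}$-coordinates, and then check that $h$ intertwines the linearizing charts of matched singularities. This is carried out for $\mathcal{C}^1$-regularity in \cite[\S 6.4.3]{SU}; the same scheme applies verbatim here provided one replaces the $\mathcal{C}^1$-Denjoy-type bounds with their $\mathcal{C}^{1+\alpha}$ counterparts, which are available because $Dh$ is $\alpha$-H\"older by Theorem A.
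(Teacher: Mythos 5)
Your overall route is the same as the paper's: pass to Poincar\'e return maps on a transversal, translate assumption (ii) into $\mathcal{B}(T)=0$ via the holonomy/boundary dictionary of \cite{SU}, apply Theorem~A, and deduce full measure for foliations from full measure for IETs. Two points need attention, though. First, your transversal for $\mathcal{F}$ is $\Sigma=\Phi^{-1}(\Sigma_0)$ with $\Phi$ only a homeomorphism: this is merely a topological arc, it need not be rectifiable or $\mathcal{C}^1$, so ``reparametrizing by arclength'' and the claim that the return map to $\Sigma$ is a $\mathcal{C}^3$ GIET are not justified. The paper goes the other way: one chooses a smooth \emph{normal} transversal $I$ for $\mathcal{F}$ itself, so that its return map is a $\mathcal{C}^3$ GIET by the very definition of a $\mathcal{C}^3$ foliation (condition (r2)), and then uses the topological conjugacy of the foliations (together with leafwise holonomy) to see that this GIET is topologically conjugate to $T_0$. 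Also, in genus two you need both $d=4$ (one $6$-pronged saddle) and $d=5$ (two simple saddles), not only $\mathcal{I}_4$, and the measure class in the corollary is given by the Katok fundamental class on $\mathrm{H}^1(S,\Sing_{\mathcal{F}},\mathbb{R})$; this coincides with the Lebesgue class on length data that you invoke, so your full measure argument is essentially the paper's, just phrased via Masur--Veech.

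Second, your last paragraph tries to promote $h$ to a surface conjugacy $H$ by transporting along leaves, and you yourself identify the extension across singular leaves as the main obstacle, settling it only by asserting that the scheme of \cite{SU}, \S~6.4.3 applies ``verbatim''. In the paper this step is not needed: the regularity of a conjugacy between minimal foliations is taken (as recalled from \cite{SU}) to be equivalent to the regularity of the conjugacy between their Poincar\'e maps, so the corollary follows at once from the $\mathcal{C}^{1+\alpha}$-conjugacy of $T$ and $T_0$. If one does insist on producing a $\mathcal{C}^{1+\alpha}$ diffeomorphism of $S$, your sketch is genuinely incomplete as written: leaves of a foliation carry no canonical time parametrization (so ``flowing for the matching time'' must be replaced by holonomy transport), and the regularity of the transported map along separatrices and at the saddles is exactly the delicate point, which cannot be dispatched by a ``verbatim'' reference.
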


The notion of full measure 
on orientable measured foliations (with fixed type of singularities) used in Corollary~\ref{cor:foliationsrigidity} is given by the so-called \emph{Katok fundamental class},  see \S~\ref{sec:final}.


\subsection{Regularity  from exponential convergence of renormalisation}\label{sec:renormalisationintro}
 The proof of the $\mathcal{C}^{1+\alpha}$-regularity of the conjugacy in the Main Theorem (as well as the existence of a $\mathcal{C}^{1}$-conjugacy, proved in \cite{SU}) is based upon renormalisation techniques.  In terms of renormalisation, we prove a more general result, which holds for any $d\geq 2$ and guarantees the $\mathcal{C}^{1+\alpha}$-regularity of the conjugacy of a GIET $T$ to its linear model as long as the orbit of $T$ under renormalisation converges exponentially fast to the subspace of IETs (see Theorem~\ref{thmb}), as we now explain.
 
 \smallskip
 Let $\mathcal{X}^r_d$ denote the space of all GIETs of class $\mathcal{C}^r$ on $d$ intervals  with an irreducible combinatorics (see  \S~\ref{giet} for definitions). 
is  obtained associating to a  
 given a GIET $T:I\to I$ on $I=[0,1]$ in the domain of $\mathcal{R}$ 
 another GIET, which we will call $\mathcal{R}(T)$, 
which is obtained by suitably choosing an  subinterval $I'\subset I$ (so that the induced map is well defined and is again a GIET  $T'$ of the same number of intervals) and considering the \emph{induced map} of $T$ and \emph{normalizing}, i.~.e.~conjugating by the affine transformation which maps $I'$ to $I$, so that the image is again a GIET on $I$.  The renormalisation operator that we study is 
 an acceleration of  Rauzy-Veech induction, a classical algorithm first introduced  by Rauzy \cite{Ra:ech}  and  by Veech \cite{Ve:gau, Ve:inI} to \emph{renormalize} standard IET and study their fine ergodic properties. This renormalisation can be defined also on GIETs with no connections and plays a crucial role also in the study of GIETs, especially those who are conjugate to a standard IET (see e.g.~in \cite{MMY} and \cite{SU}). 
 
 \smallskip

   The general statement about regularity of the conjugacy is the following result, which is valid  for any $d\geq 2$ but  conditional to the assumption of exponential convergence of the renormalisation dynamics.

\begin{thmb}[[exponential convergence gives a.s.~$\mathcal{C}^{1+\alpha}$-conjugacy]
\label{thmb}
For any $d\geq 2$, for a.e.~IET $T_0$ in $\mathcal{X}^r_d$ the following holds. Assume that $T$ is a GIET in  $\mathcal{X}^r_d$, $r\geq 3$, which is  conjugated to $T_0$ by a diffeomorphism of $[0,1]$ of class $\mathcal{C}^{1}$. Then if the orbit  $(\mathcal{R}^m(T))_{m\in\mathbb{N}}$ of $T$ under renormalisation converges exponentially fast, in the $\mathcal{C}^2$ distance, to the subspace $\mathcal{I}_d$ of (standard) IETs, i.e.
$$d_{\mathcal{C}^2} (\mathcal{R}^m(T),  \mathcal{I}_d)\leq C \lambda^m, \qquad \, \forall m \in \N,
$$
for some $C>0$ and $0<\lambda<1$ (where the distance  $d_{\mathcal{C}^2} $  is defined in \S~\ref{giet}), then there exists $0<\alpha<1$  (depending only on $\lambda$) such  that the conjugacy between $T$ and $T_0$ is actually a diffeomorphism of $[0,1]$ of class $\mathcal{C}^{1+\alpha}$.
\end{thmb}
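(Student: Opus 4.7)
The plan is to build on the $\mathcal{C}^1$-conjugacy result already obtained in \cite{SU} under the same hypotheses, and to upgrade its regularity from $\mathcal{C}^1$ to $\mathcal{C}^{1+\alpha}$ by means of a distortion argument along the dynamical partitions produced by the renormalisation operator $\mathcal{R}$. Recall that each iterate $\mathcal{R}^m T$ is obtained by inducing $T$ on a nested sequence of subintervals $I_m \subset [0,1]$ and renormalising by the affine map sending $I_m$ back to $[0,1]$; this naturally produces, for each $m$, a Rauzy--Veech partition $\mathcal{P}_m$ of $[0,1]$ into Rohlin towers, whose atoms $J \in \mathcal{P}_m$ are images of subintervals of $I_m$ under finite iterates $T^{-j}$. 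On each such atom the commutation relation $h\circ T = T_0 \circ h$ allows one to factor $h|_J$ as a composition of branches of $T$, the affine rescaling, a branch of $\mathcal{R}^m T$, the analogous branch of $\mathcal{R}^m T_0$, and branches of $T_0^{-1}$. Differentiating and taking logarithms expresses $\log Dh|_J$ as a finite combination of Birkhoff sums of the non-linearity $\eta_T := D\log DT$ along orbit pieces of length comparable to the height of the tower containing $J$.

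The exponential hypothesis $d_{\mathcal{C}^2}(\mathcal{R}^m T, \mathcal{I}_d)\leq C\lambda^m$ enters crucially at this point: standard IETs have identically vanishing non-linearity on the interior of each continuity interval, hence the non-linearity of $\mathcal{R}^m T$ is uniformly bounded on each of its branches by $C'\lambda^m$. Pulling this bound back through the affine renormalisation and the iteration (and noting that $\eta_{T_0}\equiv 0$ contributes nothing from the $T_0$ side) yields a uniform distortion estimate of the form
\[
\bigl|\log Dh(x) - \log Dh(y)\bigr| \,\leq\, C''\, \lambda^m \qquad \text{whenever } x,y \in J,\ J\in\mathcal{P}_m,\ m\in\mathbb{N}.
\]

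To deduce H\"older continuity of $Dh$, given $x\neq y$ in $[0,1]$ one picks $m=m(x,y)$ to be the largest integer for which $x$ and $y$ still belong to a common atom of $\mathcal{P}_m$. For a.e.~$T_0$, Oseledets/Zorich cocycle estimates for the accelerated Rauzy--Veech renormalisation (built into the full-measure hypothesis on $T_0$) guarantee a geometric decay of the partition mesh, $\mathrm{mesh}(\mathcal{P}_m)\leq C'''\mu^m$, together with a matching lower bound ensuring that $|x-y|\leq C''''\mu^{m(x,y)}$. Combining the two estimates gives
\[
\bigl|\log Dh(x)-\log Dh(y)\bigr| \,\leq\, C''\lambda^{m(x,y)} \,\leq\, \widetilde{C}\,|x-y|^{\alpha}, \qquad \alpha \,:=\, \frac{\log (1/\lambda)}{\log (1/\mu)},
\]
and since $h$ is already known to be a $\mathcal{C}^1$ diffeomorphism, this promotes the conjugacy to class $\mathcal{C}^{1+\alpha}$ with an exponent depending only on $\lambda$ (through $\mu$, which depends only on $T_0$).

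The main technical obstacle lies in the geometric control of the partitions $\mathcal{P}_m$: contrary to the circle-diffeomorphism case, where Herman--Yoccoz real bounds yield unconditional bounded geometry, for GIETs the atom sizes at level $m$ can fluctuate substantially across a single partition and Denjoy--Koksma is not directly available. The acceleration of Rauzy--Veech has to be chosen carefully so that, on a full measure set of $T_0$, one simultaneously obtains (i) exponential decay of $\mathrm{mesh}(\mathcal{P}_m)$ with matching lower bounds on adjacent atoms, and (ii) uniformly bounded distortion for Birkhoff sums of $\eta_T$ along towers. Balancing these Oseledets-type estimates against the deterministic exponential decay rate $\lambda$ supplied by the hypothesis is exactly what produces the H\"older exponent $\alpha$ appearing in the conclusion.
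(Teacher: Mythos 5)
There is a genuine gap at the heart of your argument, namely in the claimed distortion estimate $|\log Dh(x)-\log Dh(y)|\leq C''\lambda^m$ for $x,y$ in a common atom $J$ of $\mathcal{P}_m$. Writing $J=T^j(I_m^i)$ and $u=T^{-j}x$, $v=T^{-j}y$, the commutation relation (together with the fact that $T_0$ is affine on each floor) gives
$$\log Dh(x)-\log Dh(y)\;=\;\bigl(\log Dh(u)-\log Dh(v)\bigr)\;-\;\bigl(S_j\log DT(u)-S_j\log DT(v)\bigr).$$
The second term is indeed of the type you describe: a sum of integrals of $\eta_T$ over disjoint intervals inside a tower, and making it exponentially small is a real (not automatic) estimate, but it is true and is exactly what the paper proves via Lemma~\ref{nLdecay} and Proposition~\ref{prop:reminderestimate}, using the positivity of the matrices to compare $|I_k|/|I_j|$. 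The problem is the first term: your factorization of $h|_J$ through ``a branch of $\mathcal{R}^mT$ and the analogous branch of $\mathcal{R}^mT_0$'' necessarily involves the \emph{renormalized conjugacy} $h_m$ (the affine rescaling of $h|_{I_m}$), and the oscillation of $\log Dh$ over the base $I_m^i$ --- equivalently the non-affinity of $h_m$ --- is precisely the quantity one is trying to bound. The hypothesis $d_{\mathcal{C}^2}(\mathcal{R}^mT,\mathcal{I}_d)\leq C\lambda^m$ controls the renormalized \emph{maps}, not the renormalized \emph{conjugacies}; a priori the $\mathcal{C}^1$ assumption only gives $\mathrm{osc}_{I_m}(\log Dh)=o(1)$ with no rate, so the step ``pulling the non-linearity bound back through the affine renormalisation'' is circular. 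This is exactly the difficulty the paper is designed to circumvent: it works with $\varphi=\log Dh$ via the cohomological equation $\varphi\circ T-\varphi=f$, $f=\log DT$, evaluated along the \emph{single orbit} $\mathcal{O}_T(0)$, where differences of $\varphi$ are pure Birkhoff sums of $f$ and the conjugacy term cancels identically (Proposition~\ref{prop:specialvariation}, via special and \emph{broken} Birkhoff sums, Lemma~\ref{lemma:basicestimate} and Proposition~\ref{prop:reminderestimate}); general points are then reached by the single-orbit approximation construction of Proposition~\ref{prop:appr}, whose visit-count bounds $2\Vert A_k\Vert\,\Vert A_{k+1}\Vert$ together with condition (S) of Definition~\ref{def:DC} keep the telescoped series geometric. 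None of this machinery is replaceable by the direct distortion argument you propose.

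A secondary, more fixable issue is your choice of scale: taking $m(x,y)$ to be the \emph{largest} $m$ for which $x,y$ share an atom of $\mathcal{P}_m$ breaks down for points straddling a partition endpoint --- such points never share an atom at deeper levels, so $m(x,y)$ stays bounded while $|x-y|\to 0$, and no lower bound of the form $|x-y|\geq c\mu^{m(x,y)}$ (which is what the H\"older estimate actually requires, not the upper bound you wrote) is available. The paper instead defines the scale $k(x,y)$ as the \emph{smallest} $k$ such that $[x,y]$ contains a full floor of $\mathcal{P}_k$ (Definition~\ref{def:scale}); this yields both the length lower bound of Lemma~\ref{lemma:denom} (using (P), (S), (E) of the TPG condition and the $\mathcal{C}^1$ conjugacy to transfer to $T_0$) and the fact that $[x,y]$ meets at most two adjacent floors of $\mathcal{P}_{k-1}$ (Lemma~\ref{rk:order}), the straddling case being handled by splitting at the common endpoint in the proof of Proposition~\ref{prop:numerator}. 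So while your overall scheme (exponential oscillation bound at a well-chosen scale versus exponential lower bound on $|x-y|$, giving $\alpha=\log\lambda_2/\log\lambda_1$) matches the paper's outline in \S~\ref{sec:strategy}, the two key inputs --- the oscillation bound itself and the correct notion of scale --- are not established by your argument.
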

\noindent 

The proof of Theorem \ref{thmb} (which is given in \S~\ref{sec:proof})  constitutes the heart of this paper. We comment in \S~\ref{sec:diff} on both similarities and difficulties in deducing $\mathcal{C}^{1+\alpha}$ from exponential convergence of renormalization in this setting compared to related results in the literature.

The full measure set of IETs in the statement of Theorem~\ref{thmb} is explicitely characterized by a simple \emph{Diophantine-like} condition (see the survey \cite{Ul:ICM} on the notion of Diophantine-like conditions for IETs), namely a condition expressed in terms of growth conditions of the matrices of (an acceleration of) the Rauzy-Veech incidence matrices (as defined in \S~\ref{sec:renormalisation}). We refer the reader to Definition~\ref{def:DC} for the condition. 

\subsection{Convergence of renormalisation in genus two}\label{sec:renormalisationg2}
A crucial difference between circle diffeomorphisms and GIETs, though, is that even \emph{convergence} of renormalisation (namely that $ d_{\mathcal{C}^1} (\mathcal{R}^m(T),  \mathcal{I}_d)\to 0$ as $m\to \infty$) is \emph{rare}. In particular, the orbit  $(\mathcal{R}^m(T))_{m\in\mathbb{N}}$ of  a GIET $T$ under renormalisation often diverges (even though ina  controlled way, see \cite{SU} for a dynamical dichotomy which characterizes the way in which divergence happens): the set of GIETs for which there is (exponential) convergence of renormalisation are expected to form a lower dimensional subvariety (see the conjectures by Marmi, Moussa and Yoccoz in \cite{MMY3} as well as the result \cite{Selim:loc} by the first author in a special case).

Nevertheless, we showed in \cite{SU} that in genus two (i.e.~for $d=4$ or $d=5$) the existence of a topological conjugacy between $T$ and a standard IET $T_0$ in a suitable full measure subset of $\mathcal{I}_d$  is sufficient to guarantee exponential convergence of renormalisation (under the necessary assumption that the boundary of $T$ vanishes). The following result was indeed proved in \cite{SU}:

\begin{thmc}[from \cite{SU}, rigidity and exponential convergence of renormalisation in genus two]\label{thmc}
Assume that $r\geq 3$ and let $d=4$ or $5$. There exists $0<\rho<1$ such that, for Lebesgue almost every  interval exchange transformation $T$ in $\mathcal{I}_4 \cup \mathcal{I}_5$, given   $T$ any  $\mathcal{C}^3$-generalized interval exchange map $T$ whose boundary $\mathcal{B}(T)$ vanishes, if $T$ is \emph{topologically conjugate} to $T_0 $, then  orbit  $(\mathcal{R}^m(T))_{m\in\mathbb{N}}$ of $T$ under renormalisation converges exponentially fast, in the $\mathcal{C}^2$ distance, to the subspace $\mathcal{I}_d$ of (standard) IETs, i.e.
\begin{equation}
\label{exp:conv}
d_{\mathcal{C}^2} (\mathcal{R}^m(T),  \mathcal{I}_d)\leq C \rho^m, \qquad \, \forall m \in \N,
\end{equation}
Furthermore, in this case $T$ and $T_0$ are \emph{differentiably} conjugate, i.e.~the conjugacy is a $\mathcal{C}^{1}$ diffeomorphism of $[0,1]$.
\end{thmc}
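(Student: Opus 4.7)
The plan naturally splits into two parts: first establishing the exponential convergence \eqref{exp:conv} of the renormalization orbit, and then deducing $\mathcal{C}^1$-conjugacy from it.

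For the exponential convergence, my strategy is to study the cocycle governing the evolution of the non-linearity $\eta_T$ under renormalization. At each step of $\mathcal{R}$, the underlying standard IET undergoes Rauzy--Veech induction on $\mathcal{I}_d$ while the non-linearity profile is transported by a transfer-type operator over the Rauzy--Veech base dynamics. This transfer cocycle is essentially conjugate to the Kontsevich--Zorich (KZ) cocycle, and the Marmi--Moussa--Yoccoz boundary operator $\mathcal{B}$ captures exactly the contribution of the top KZ direction. The hypothesis $\mathcal{B}(T) = 0$ projects away this top contribution, so the growth rate of $\eta_{\mathcal{R}^m(T)}$ is controlled by the \emph{second} KZ exponent $\lambda_2$. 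For $d = 4$ or $5$ (genus two) one has $0 < \lambda_2 < \lambda_1 = 1$ for almost every $T_0$, by Forni's theorem on non-vanishing of the KZ Lyapunov exponents. Comparing this growth rate with the geometric contraction coming from renormalization rescaling then yields exponential decay of the rescaled nonlinearity, hence exponential $\mathcal{C}^2$-closeness of $\mathcal{R}^m(T)$ to $\mathcal{I}_d$.

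The Kontsevich--Zorich argument above presupposes, however, that the orbit $(\mathcal{R}^m(T))_{m \geq 0}$ does not escape to infinity in $\mathcal{C}^2$. I therefore expect a dichotomy to be needed in between: either $d_{\mathcal{C}^2}(\mathcal{R}^m(T), \mathcal{I}_d)$ stays bounded along a subsequence (the \emph{convergent regime}, where the KZ argument applies), or it blows up (the \emph{divergent regime}, in which wandering intervals typically develop or the renormalized combinatorics degenerates). The hard part, which I expect to be the central obstacle of the proof, is ruling out the divergent regime using \emph{only} that $T$ is topologically conjugate to $T_0$: one must convert a purely topological hypothesis into quantitative control on the renormalized dynamics. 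The natural strategy is to argue that divergence would force either wandering intervals in $T$ or orbits accumulating on a nontrivial Cantor-like minimal set, both incompatible with topological conjugacy to the minimal uniquely ergodic IET $T_0$. The restriction to genus two is essential here: the Rauzy diagrams for $d = 4, 5$ have a sufficiently rigid structure that all possible divergence mechanisms can be enumerated and excluded by hand.

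Once exponential convergence \eqref{exp:conv} is secured, the $\mathcal{C}^1$-regularity of the conjugacy $h$ follows by a now classical Herman-type telescoping argument. Iterating $\mathcal{R}$ produces dynamical partitions of $[0,1]$ into intervals of exponentially small size, and on each such interval $h$ must coincide, after affine rescaling, with a dynamical comparison of $\mathcal{R}^m(T)$ against $\mathcal{R}^m(T_0)$; by \eqref{exp:conv} these comparisons are exponentially close to being affine. A Cauchy-type summation across renormalization levels, together with Denjoy--Koksma distortion bounds, then shows that the difference quotients of $h$ converge uniformly to a continuous positive function $Dh$, yielding the $\mathcal{C}^1$-conjugacy. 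This last step is essentially routine compared to the previous two and requires only standard care with distortion estimates.
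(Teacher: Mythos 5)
Theorem~C is not actually proved in this paper: it is imported verbatim from \cite{SU}, and the text here only sketches the route taken there, namely (i) a priori bounds and the dynamical dichotomy of \cite{SU} (convergence of the renormalisation orbit versus divergence along an \emph{affine shadow}), (ii) exclusion of the divergent regime using the Marmi--Moussa--Yoccoz theorem \cite{MMY2} on wandering intervals for affine IETs --- and this is precisely where the restriction $d=4,5$ enters, since \cite{MMY2} requires a technical hypothesis which is automatic in genus two, (iii) exponential speed of convergence under a full-measure Oseledets/Diophantine-type condition on $T_0$, and (iv) a Herman-style argument upgrading topological to $\mathcal{C}^1$ conjugacy. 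Your second and third paragraphs are broadly consistent with this skeleton: the dichotomy, the conversion of divergence into wandering-interval behaviour incompatible with conjugacy to a minimal IET, and the telescoping Herman-type argument for $\mathcal{C}^1$ regularity are indeed the main steps of \cite{SU}.

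Your first paragraph, however, proposes a mechanism for the exponential convergence that does not match \cite{SU} and, as stated, would fail. The boundary operator does not capture the top Kontsevich--Zorich direction: the kernel of $\partial$ is exactly the $2g$-dimensional subspace on which the KZ cocycle acts with exponents $\pm\lambda_i$, while the condition $\mathcal{B}(T)=0$ removes the complementary \emph{central} (zero-exponent) directions associated with $\mathbb{R}^\kappa$. Moreover $\lambda_2>0$ is an \emph{expanding} exponent, so ``growth controlled by $\lambda_2<1$'' does not produce decay; decay requires showing that the relevant data (the affine shadow, i.e.\ the log-slope asymptotics) has no component in the unstable directions at all, and that is precisely what the wandering-interval argument of step (ii) establishes --- it cannot be extracted from $\mathcal{B}(T)=0$ alone, nor from Forni's non-vanishing theorem. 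The exponential rate then comes from the Oseledets-type full measure condition on $T_0$, not from $\lambda_2<1$. Finally, your explanation of the genus-two restriction (that the Rauzy diagrams for $d=4,5$ allow all divergence mechanisms to be enumerated by hand) is not the actual reason: as the paper states explicitly, the restriction comes solely from the technical hypothesis in \cite{MMY2}, which happens to be automatic in genus two, and Theorem~C is expected to hold for all $d\geq 2$ once that result is generalized.
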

To show this result, in \cite{SU} it is shown first that the existence of a topological conjugacy in genus two  prevents renormalisation to diverge (this part exploits a result proved  by Marmi, Moussa and Yoccoz in \cite{MMY2} on existence of wandering intervals in affine GIETs). In light of the dynamical dichotomy proved in   \cite{SU}), it  then follows that there is convergence of renormalisation and, under a full measure condition on the IET (which plays the role of rotation number), that this convergence happens at exponential speed. The differentiability of the topological conjugacy can then be proved from exponential convergence of renormalisation generalizing methods which go back to the seminal work of Michel Herman on linearization of circle diffeomorphisms.

The combination of Theorem~C and Theorem~B yields immediately Theorem~A, see  \S~\ref{sec:final}.  
We stress that Theorem~B requires no assumption on $d\geq 2$. Theorem~C is also expected to hold for any $d\geq 2$. A great part of the results in \cite{SU} are already proved for any $d\geq 2$; the restriction $d=4,5$ in Theorem~C comes from the use of Marmi, Moussa and Yoccoz  work \cite{MMY2} (which requires a technical assumption which is automatic in genus two). Provided that a generalization of this result will be proved, Theorem~A will automatically hold true for any $d\geq 2$.


\subsection{On the proof strategy and tools and related results}  \label{sec:diff}
In  one-dimensional, infinitely renormalisable dynamical systems, it is expected that two maps whose renormalisations are getting asymptotically close at an exponential rate must be $\mathcal{C}^{1+ \alpha}$-conjugate for some $\alpha > 0$ (provided some mild arithmetic condition is satisfied). The proof of this important technical step (moving from hyperbolicity of renormalisation to $\mathcal{C}^{1+ \alpha}$-rigidity) can prove difficult, as it requires a careful comparison of the dynamical partitions induced by the infinite renormalisation. Different methods were exploited in different setting and can be found at play for example in the following 
references (see also the references therein):
\begin{itemize}
\item \cite{McMullen2}[Chapter 9] for quadratic unimodal maps; 
\item \cite{deFariadeMelo} for (bounded type) critical circle maps;
\item \cite{KhaninKhmelev} for (bounded type) circle maps with break points;
\item \cite{MartensPalmisano} for some critical Lorenz maps.
\end{itemize}
\noindent A way around having to look directly into the geometry of the associated dynamical partitions exists when \textit{one of the two conjugate maps is linear} (for instance in the case of circle diffeomorphisms or of GIETs with vanishing boundary). In this particular case,  the existence of a $\mathcal{C}^1$-conjugacy (respectively $\mathcal{C}^{1+\alpha}$-conjugacy) of a map $T$ to a linear model $T_0$ is equivalent to the observable $\log \mathrm{D}T$  being a $\mathcal{C}^0$ (respectively $\mathcal{C}^{\alpha}$) co-boundary,  namely to the existence of a  $\mathcal{C}^0$ (respectively $\mathcal{C}^{\alpha}$) solution $\varphi$ to the \textit{cohomological equation} equation
\begin{equation}\label{eq:coheq}
\varphi\circ T  - \varphi = \log DT.
\end{equation}     
\noindent It is then possible (at least theoretically) to use results and methods about solving the \eqref{eq:coheq}  to establish $\mathcal{C}^{1+ \alpha}$-rigidity from exponential convergence of renormalisation, sparing one a delicate analysis of the geometry of the respective dynamical partitions. 

This approach is pursued by Khanin and Templisky in \cite{KT:Her} (see also the previous work \cite{SK:Her} by Khanin and Sinai) for the case of circle diffeomorphisms (to reprove Herman's theory in low regularity). The methods of \cite{KT:Her} do not generalise straightforwardly to the case of GIETs,  essentially because of some of the technical difficulties introduced by  the presence of discontinuities.  
 For example, while for
that for circle diffeomorphism  any point can be chosen as  a base point for the renormalization scheme,  GIETs do not enjoy this form of homogeneity; furthemore, dynamical partitions for GIETs have a much less rigid and less clearly understood structure than the corresponding partitions for circle diffeomorphisms.  

The  article \cite{MarmiYoccoz} by Marmi and Yoccoz studies the  regularity of the solutions to the cohomological equation  $\varphi \circ T_0 - \varphi(x) = f$ where $T_0$ is  a \emph{standard} IET  (whose existence under suitable conditions was established in \cite{MMY}) and shows that  under a full measure Diophantine-type condition, when a continuous solution exist for a given $\mathcal{C}^{1+\alpha}$-observable, it is actually  H{\"older} continuous. They approach to study regularity exploits as technical tool what they call \emph{spatial decompositions}. 
Their methods fall short from being applicable to our setting:   since we are assuming the existence of a conjugacy $h$ between $T$ and its linear model $T_0$, one can conjugate the equation \eqref{eq:coheq} to reduce from our setting to the study of the regularity of solutions to the cohomological equation for $T_0$ for $f:= \log \mathrm{D}T\circ h$, but since $h$ is \textit{a priori} is only $\mathcal{C}^1$, the regularity of the observable $f$  is a priori only $\mathcal{C}^1$, so the results in \cite{MarmiYoccoz} cannot be applied.

\smallskip
The key technical contributions of this article are two-fold: one one hand \emph{analytic} (controlling the effect of non-linear terms to the geometry of partition elements), on the other \emph{combinatorial} (introducing an alternative approach to control the combinatorial structure of dynamical partitions, which is somehow \emph{hybrid} between the spatial decompositions introduced by Marmi and Yoccoz in \cite{MarmiYoccoz} and the classical approach for circle diffeomorphisms pursued in \cite{KT:Her}). 

\smallskip
\noindent \emph{New analytic tools.}  The analytic crucial step in the proof is to show that from the assumption of exponential convergence of renormalisation, one can gain extra analytical information on the observable $\log \mathrm{D}T$, which in turn can be used to control of the distorsion of floors of towers in the dynamical partition (this control can be deduced from Proposition~\ref{prop:reminderestimate}, which provides an estimate of what we call \emph{broken Birkhoff sums}, see Definition~\ref{def:brokenBS}).  
We remark that Proposition~\ref{prop:reminderestimate} provides a non-linear counterpart for Lemma 3.20 in the linear setting of \cite{MarmiYoccoz} and should in principle suffice to apply the approach of \cite{MarmiYoccoz} to our setting, namely to the observable $\log \mathrm{D}T$.  We provide instead also an alternative approach to the combinatorial part of \cite{MarmiYoccoz} (which is quite involved). 

\smallskip
\noindent \emph{New combinatorial tools.}   In order to investigate the regularity of the conjugacy,  a combinatorial understanding of the structure of dynamical partitions and orbits is needed. Exponential convergence of renormalization controls rather directly the convergence of Birkhoff sums of the function $\log \mathrm{D}T$ at special times given by renormalization (namely so-called \emph{special Birkhoff sums}, see~\ref{eq:sBS} and Lemma~\ref{SBSviaR}). Marmi and Yoccoz in \cite{MarmiYoccoz} analyse the spatial  variation  of a solution $\varphi$ of the cohomological equation \eqref{eq:coheq} for the function $f:= \log \mathrm{D}T$ using a \emph{spatial decomposition} (into blocks of the form  $\Delta \varphi (J) := \varphi(b)-\varphi(a)$ where $J=(a,b)$ is a floor of a dynamical partition); one then has to related the control of the quantities $\Delta \varphi (J)$ to special Birkohff sums.  Rather  than spatial decompositions as in \cite{MarmiYoccoz}, we  use \emph{time-decomposition} of Birkhoff sums of the function $f=\log DT$ along the orbit $\mathcal{O}_T(0)$ of the point $x_0=0$ (as in the classical case of circle diffeomorphisms, see e.g.~\cite{KT:Her}). Due to the lack of \emph{homogeneity}, though, we cannot choose $x_0$ so we approximate both $x$ and $y$  through points in $\mathcal{O}_T(0)$,  by building what which we call \emph{single orbit approximations}, see \S~\ref{sec:appr} 
(in particular Propostion~\ref{prop:appr}) for details. We believe that this new approach is of independent interest and may find further applications.

\section{Background material}\label{background}
In this preliminary section we recall some basic definitions and background on generalized interval exchange maps (in \S~\ref{giet}) and their renormalisation (see \S~\ref{sec:renormalisation}), as well as a few non-linear tools (see \S~\ref{sec:nl}). We assume that the reader has familiarity of basic definitions and properties of Rauzy-Veech induction, on which many excellent lecture notes are available (see e.g.~\cite{Vi:IET} or \cite{Yoc:Clay}). An understanding of structure of dynamical partitions and  Rohlin towers induced by these type of induction is especially crucial (see \S~\ref{sec:renormalisation} below, or also \cite{Ul:abs}, \S~2.2). The reader interested in  cohomological methods to solve the conjugacy problem can find a general introduction in \cite{KH:mod} (Chapter 12) or \cite{Si:Top} (Lecture $10$) and  examples in different settings in \cite{SK:Her, MMY, SU}.

\subsection{Generalized interval exchange transformations}\label{giet} 
Let us start by recalling the  definition of generalized interval exchange transformations, or, for short, GIETs. 
Let $d\geq 2$ be an integer and $r$ a positive real number. A $\mathcal{C}^r$-generalized interval exchange transformation (GIET) of $d$ intervals, or for short a $d$-GIET of class $r$, is a map $T$ from the interval $[0,1]$ to itself such that:
\begin{itemize}
\item[(i)] there are two partitions (up to finitely many points) of $[0,1] = \bigcup_{i=1}^d{I^t_i} =  \bigcup_{i=1}^d{I^b_i} $  of $[0,1]$ into  $d$ open disjoint subintervals, called the \emph{top} and \emph{bottom} partition; the subintervals are denoted respectively $I_i^t$, for $1\leq i \leq d$, and $I_i^b$, for $1\leq i \leq d$;
\smallskip

\item[(ii)]  for each $1\leq i\leq d$, $T$ restricted to $I_i^t$ is an orientation preserving diffeomorphism onto $I_{i}^b$ of class $\mathcal{C}^r$;
\smallskip

\item[(iii)] $T$ extends to the closure of $I_i^t$ to a $\mathcal{C}^r$-diffeomorphism onto the closure of $I_{i}^b= T(I_i^t)$.

\end{itemize}
See Figure \ref{IET} (left) for an example of a graph of a GIET with $d=4$. 
We will call the restriction $T_i:=T|I^t_i$ of $T$ onto $I^t_i$, for $1\leq i\leq d$, a \emph{branch} of $T$. 

	\begin{figure}
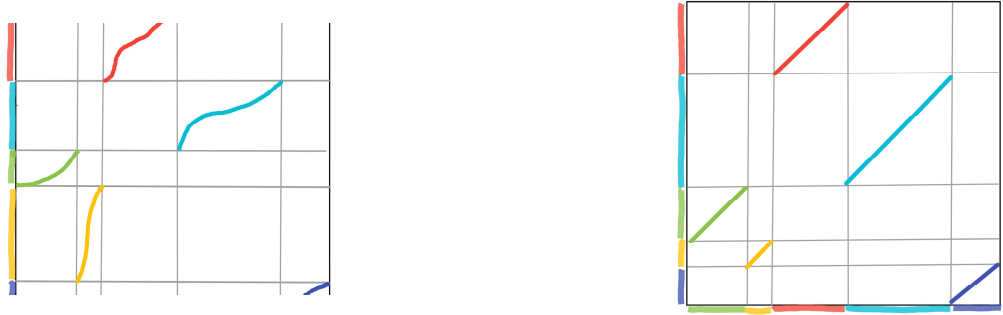

\centering
\begin{minipage}{.5\textwidth}
  \centering
			\def\svgwidth{ 0.8\columnwidth}
 \includegraphics[width=.55\textwidth]{Gietgraph.pdf}
\end{minipage}%
\begin{minipage}{.5\textwidth}
  \centering
			\def\svgwidth{ 0.8\columnwidth}
			\includegraphics[width=.69\textwidth]{IETplot.pdf}

\end{minipage}
\caption{A generalized  IET (GIET)  and a (standard) IET  with $d=5$.\label{IET}}
\end{figure}

\smallskip
\noindent \textit{Standard interval exchange transformations} (IETs) can be seen a a special cases of generalized interval exchange transformations: 
a GIET $T$ is an (\emph{standard}) \emph{interval exchange transformation} or a $IET$ if $|I^t_i|=|I^b_t|$ for every $1\leq i\leq d$ and the branches $T_i$ of the map $T$, for every $1\leq i\leq d$, are assumed to be \emph{translations}, i.e.~of the form $x\to x+\delta_i$ for some $\delta_i\in\mathbb{R}$. 
See Figure \ref{IET} (right) for an example of a graph of an IET with $d=4$. 

\smallskip
\noindent {\it Conjugacies.} We say that a GIET $T$ is \emph{linearizable} if it is \emph{topologically conjugated} to a standard IET $T_0$, i.e.~there exists a homeomorphism $h:[0,1]\to [0,1]$, called \emph{conjugacy}, such that $h\circ T_1= T_0\circ h$. We say that $T$ is \emph{differentiably linearizable} if $T$ and $T_0$ are \emph{differentiably} conjugate, i.e.~$h$ is a $\mathcal{C}^1$ diffeomorphism of $[0,1]$.

\smallskip
\noindent {\it Relation with foliations.}
We recall that generalized interval exchange transformations appear naturally as Poincar{\'e} first return maps of  orientable foliations on a surface on transversal segments. The discontinuities arise indeed from points on the interval which hit a singularity of the foliation (or an endpoint of the transversal interval) and therefore do not return to the transversal, while the intervals $I_j^t$ are continuity intervals of the Poincar{\'e} map. The smoothness $r$ of the branches depends on the regularity of the foliation. When the foliation is a \emph{measured foliation}, one can choose coordinates so that the Poincar{\'e} map is a standard IET (see e.g.~\cite{Yoc:Clay} or \cite{Vi:IET}). 


\smallskip
\noindent {\it Combinatorial data and irreducibility.}  
The \emph{order} of the intervals (from left to right) at the top and bottom partition of a GIET can be encoded using \emph{two permutations}
 $\pi_t$ and $\pi_b$ of $\{1,\dots, d\})$: $\pi_t$ (resp.~$\pi_b$) describes the order of the intervals in the top (resp.~bottom) partition. 
We call the pair $\pi=(\pi^t,\pi^b) $ the \emph{combinatorial datum} of $T$. We will always assume that the combinatorial datum is \emph{irreducible}, i.e.~for every $1\leq k<d$ we have
$
{ \pi_t \{ 1,\dots, k\} \neq  \pi_b \{ 1,\dots, k\}}.
$
We will denote by $\mathfrak{S}_d^0$ the set of irreducible combinatorial data $\pi=(\pi_t,\pi_b)$ with $d$ symbols. 

\smallskip
\noindent {\it Keane condition.}
We denote by $u_i^t$, for $0\leq i\leq d$ the endpoints of the top partition intervals and, respectively by  $u_i^b$, $0\leq i\leq d$, the endpoints of the bottom partition, in their natural order. 
%
A \emph{connection} is  a triple $(u^t_j, u^b_i, m)$ where $m$ is a positive integer such that $T^m (u^b_j)=v^t_i$. When $T$ is the Poincar{\'e} map of a transveral a flow along the leaves of a foliation on $S$, connections correspond to \emph{saddle connections} on $S$, i.e.~trajectories of the flow which connect two singularities.  
We say that $T$ satisfies the Keane condition (or, simply, that $T$ is \emph{Keane}) if it has \emph{no connections}, i.e.~if no such triple exists. Let us recall that almost every IET in $\mathcal{I}_d$ is Keane and that, as shown by Keane in \cite{Ke:int} (see also \cite{Vi:IET} or \cite{Yoc:Clay}) if a (standard) IET is Keane, then it is minimal. 

\smallskip
\noindent {\it Parameter spaces.} 
For a fixed differentiability class $r \in \R_+$ and number of intervals $d\geq 2$, we define the space $\mathcal{X}^r $ 
 of generalized interval exchange transformations of class $\mathcal{C}^r$ with $d$ intervals, namely $\mathcal{X}^r := \bigcup_{\pi \in \mathfrak{S}_d^0}{\mathcal{X}_{\pi}^r }$ where $\mathcal{X}_{\pi}^r$ is the set of {d-GIET of class} $\mathcal{C}^r $ with associated permation $\pi$. 
The subspace of \emph{(standard)} interval exchange transformations 
 with combinatorics $\pi$ will be denoted by $\mathcal{I}_\pi$. 
For any $d\geq 2$, let us set
$ \mathcal{I}_d := \bigcup_{\pi \in \mathfrak{S}_d}{\mathcal{I}_{\pi} }\subset \mathcal{X}_{d}^r$ (for any $r>0$). 
 
\smallskip
\noindent {\it Profile and coordinates.} 
Given a GIET with combinatorics $\pi$ and continuity intervals $(I_i^t)_{1 \leq i\leq d}$, let us define the \emph{profile} of $T$ to be the vector $\varphi_T:=(\varphi_T^1, \cdots, \varphi_T^d)\in (\mathrm{Diff}^r([0,1]))^d $ whose entries $\varphi_T^j\in \mathrm{Diff}^r([0,1])$ are \emph{renormalized} copies of each branch $T_j$, namely $\varphi_T^i:= a_i \circ  T_i \circ b_i $ where $b_i$ is the unique orientation preserving affine map mapping $I_i^t$ onto $[0,1]$ and  $a_i$ is the unique orientation preserving affine map mapping $[0,1]$ onto $I_{\pi(i)}^b$. If we furthermore define the vector $\rho_T:= (\rho_T^1, \dots , \rho_T^d)\in \mathbb{R}^d$ so that each entry  $\rho^i_T $ is given by 
$\rho^i_T:={ |I_i^b|}/{|I_i^t|} = {|T(I_i^t)|}/{|I_i^t|}$, 
the GIET $T$ is uniquely defined by the data $(\pi,\lambda_T, \rho_T ,\varphi_T )$ (there are essentially the \emph{shape-profile} coordinates\footnote{The \emph{shape} of a GIET is the unique \emph{affine} IET with combinatorics $\pi$ whose derivative $DT$ restricted to  
$I^t_i$ is a constant $DT_i$ such that $\log DT_i= {\rho_T^j}$. The shape is determined by the combinatorial datum $\pi$, the lenght vector $|\lambda_T$  
  and the vector $\rho_T$, known as \emph{log-slope} vector, see \cite{SU}.} used in \cite{Selim:loc, SU}). 

\smallskip
\noindent {\it The $\mathcal{C}^2$ distance.} 
To define the $\mathcal{C}^2$ distance on  $\mathcal{X}_d^r$ with $r\geq 2$, we will use this identification and the   $\mathcal{C}^2$ on each profile coordinate. 
Given  $f : [0,1] \longrightarrow \mathbb{R}$ of class $\mathcal{C}^2$, let  
$ ||f||_{\mathcal{C}^2} = \max_{0\leq i \leq 2}{||f^{(i)}||_\infty} $, where $f^{(i)}$ is the $i$-th derivative of $f$ and $|| \cdot ||_{\infty}$ denotes the sup norm. We extend this norm to $(\mathcal{C}^r([0,1], \mathbb{R}))^d$ simply by taking the sum of the norms on each coordinate, so $||(f_1,\dots, f_d)||_{\mathcal{C}^2}:= \max_{1\leq j\leq d}||f||_{\mathcal{C}^2} $

Given $T_1, T_2\in \mathcal{X}_d^r$ we set $d_{\mathcal{C}^2}(T_1,T_2) = +\infty$ if $T_1,T_2$ have different combinatorial data, i.e.~$T_i\in \mathcal{X}_{\pi_i}^r$ with $\pi_1\neq \pi_2$. If $T_1,T_2\in  \mathcal{X}_{\pi}^r$ and their coordinates are  $(\pi,\lambda_{T_1}, \rho_{T_1} ,\varphi_{T_1} )$ and  $(\pi,\lambda_{T_2}, \rho_{T_2} ,\varphi_{T_2} )$ respectively, we set
$$
d_{\mathcal{C}^2}(T_1, T_2):= \max_{1\leq j\leq d} |\lambda_{T_1}^j-\lambda_{T_2}^j|  + \max_{1\leq j\leq d} |\rho_{T_1}^j-\rho_{T_2}^j| + 
||\varphi_{T_1}-\varphi_{T_2}||_{\mathcal{C}^2}.
$$  
Notice that given a sequence $(T_n)_{n\in \mathbb{N}} $ where  $T_n$ has coordinates $(\pi_n,\lambda_{T_n}, \rho_{T_n} ,\varphi_{T_n} )$, if   $d_{\mathcal{C}^2}(T_n, T)\to 0$ as $n$ grows where $T$ has coordinates $(\pi,\lambda_{T}, \rho_{T} ,\varphi_{T} )$ when eventually $\pi_n=\pi$ and  $\rho_{T_n}\to \rho_{T}$ in $\mathbb{R}^d$, so it follows that $\lambda_{T_n}\to \lambda_{T}$ in $[0,1]^d$ and, for each coordinate of the profile,  $\varphi^i_{T_n} \to \varphi^i_T$ in $Dif\! f^2([0,1])^d$ for each $1\leq i\leq d$.

\smallskip
\noindent {\it Boundary of a GIET}. We conclude this background subsection recalling briefly a geometric definition of \emph{boundary} $B(T)$ of a GIET $T$ (first defined combinatorially and used in the work of Marmi, Moussa and Yoccoz \cite{MMY}). While we chose to include this definition for completeness since $B(T)$ appears in the statements of the main theorems, the boundary will not be used in the rest of the paper so the reader who desires to do so  can skip the rest of this subsection and move to \S~\ref{sec:renormalisation}.

Let $T  \in \mathcal{X}^r_d$ be a GIET. Let $\mathcal{U}:= \{ u_i^t, 0\leq 1\leq d\}$ be the top singularities of $T$. These can be \emph{partitioned} into $\kappa$ subsets defined as level sets of a map $s: \mathcal{U}\to \{1,\dots, \kappa\}$, where, if $T$ is the Poincar{\'e} section of a foliation $\mathcal{F}$ on a surface of genus $g$, $\kappa$ is the number of \emph{singularities} of the foliation\footnote{A generalized interval exchange map $T$ can be \emph{suspended} \cite{Ma:int, Yoc:Clay}  to an orientable (singular) \textit{foliation} $\mathcal{F} = \mathcal{F}(T)$ on a closed oriented surface $S = S(T)$ such that the singular points of $\mathcal{F}$ are {(possibly degenerate) saddles (with an even number of prongs)}; $T$ can then be recovered from $\mathcal{F}$ by considering a first-return map on a suitably chosen transverse arc $J$. Genus and number of saddles and prongs are fully determined by the combinatorial datum $\pi$ (see e.g.~\cite{Yoc:Clay} or \cite{Vi:IET}). 
Choosing  $J$ with endpoints at singularities, or on singular leaves, guarantees that the number of exchanged intervals of $T $ is as small as possible. In this case if $g$ is the genus of $S$ and $\kappa$ the cardinality of singularities, we have that  $ d = 2g + \kappa - 1.$} Geometrically, 
if we label the singularities of $\mathcal{F}$  by $\{1,\dots, \kappa\}$, the value $s(u_i)$ is the label of the singularity which corresponds to $u_i$, namely the singularity which is hit\footnote{Let $\mathcal{F} $ be a foliation on $S$ which suspends $T$, chosen so that both endpoints
  suspension  of $I$ are singular points of $\mathcal{F}$. In this case, one can show that all the singularities of $T$ are obtained by pulling-back a singular leaf of $\mathcal{F}$, i.e.~they are obtained as first return of a \emph{backward} leaf emanating from a singularity to the transveral.} 
	by the leaf of $\mathcal{F}$ emanating from $u_i$. 
A fully combinatorial definition of this map is given in \cite{MarmiYoccoz} (see also \cite{Yoc:Clay}). 

 Given $T\in \mathcal{X}_d^r$ with $r\geq 1$,  to define its boundary we consider its derivative $DT$ and set $f:= \log \D T$. By definition of a GIET, both $DT$ and $f$ are  piecewise continuous functions, continuous on each $I^t_i$ and (since $T$ extends to a differentiable diffeo on each $I^t_i$) the right and left limits of each of their branches exist. We denote    
 $f^r(u_i)$ and $f^l(u_i)$ respectively the right and left limits of $f$ at the discontinuity point $u_i$ for $1\leq i\leq d-1$.  We also set by convention $f^l(u_0):=0$ and $f^r(u_d):=0$. 


We define now the \emph{boundary} $B(T) $ of $T$ {(see also \cite{MMY2}), which is a vector  $B(T)=(B_s(T))_{1\leq s\leq \kappa}\in \mathcal{R}^\kappa$ given by 
$$ B_s(f) := \sum_{0\leq i\leq  d \ \text{s.t}\ s(u_i) = s}\left( {f^r(u_i) }-
{f^l(u_i)}\right).$$
In the notation by Marmi-Moussa-Yoccoz (see e.g.~\cite{MMY3}), $B(T):=\partial f= \partial \log \D T $
 where $\partial : \mathcal{C}_0\big(\sqcup_i{I_i^t(T)} \big)\to \mathbb{R}^\kappa$ is the \emph{boundary operator} introduced by Marmi-Moussa-Yoccoz in \cite{MMY3} (see also \cite{Yoc:Clay}).

\subsection{Renormalisation of GIETs}\label{sec:renormalisation} 
We will consider a renormalisation operator $\mathcal{R}$ on the space  $\mathcal{X}^r_d$ of GIET  defined  on the subspace of $d$-GIETs, $d\geq 2$, with no connections. Given a Keane GIET $T$, for every $n\in\mathbb{N}$, $\mathcal{R}^n T$ is another Keane GIET in $\mathcal{X}^r_d$  obtained  by rescaling the first return map of $T$ on an interval $I_n$ of the form $[0,\lambda_n]$. The operator $\mathcal{R}$ is therefore defined if we assign an algorithm which given a Keane $T$ allows to construct a sequence of nested intervals  $\{ I_n, \ n\in\mathbb{N}\} $  of the form $I_n= [0,\lambda_n]$ (so they all share zero as a common left endpoint) so that, if we denote $T_n$ the first return map of $T$ on $I_n$, $T$ is a  $d-$GIET. Let us denote by $(I^j_n)_{1\leq j\leq d}$ the subintervals exchanged by $T_n$. Then $\mathcal{R}^n T$ is the GIET in  $\mathcal{X}^r $  obtained rescaling linearly $T_n$, 
explicitely given by 
\begin{equation}\label{eq:renormalizedmap}
 \mathcal{R}^n T (x):= 
{T_n(\lambda_n x)}/{\lambda_n},
  \qquad \mathrm{for\ all} \ x\in 
\end{equation}

\smallskip
\noindent {\it Rauzy-Veech induction and its accelerations.}  The \emph{slowest}\footnote{Rauzy-Veech induction produces, for every Keane $T$, the sequence $(\lambda_n)_{n\in\mathbb{N}}\subset [0,1]$ which is \emph{maximal} with respect to inclusion, i.e.~it produces \emph{all} $\lambda\in [0,1]$ (in decreasing order) such that the induced map of $T$ on the interval $[0,\lambda]$ is again a $d$-IET (one can show indeed that these are countable). This key property of Rauzy-Veech induction was shown by Rauzy in his seminal work \cite{Ra:ech}.} algorithm with these properties is known as \emph{Rauzy-Veech induction} and will be denoted by $\mathcal{V}$. Since  the explicit definition of Rauzy-Veech induction will not play any role in what follows, we do not recall it here (and refer the interested reader for example to the lecture notes \cite{Yoc:Clay} or \cite{Vi:IET}). When a Keane $T$ is linearizable, one can show that $\lim_{n\to \infty}\lambda_n = 0$ (notice though that this is not true in general for GIETs\footnote{One can show more generally that $\lim_{n\to \infty}\lambda_n = 0$ exacly when the \emph{rotation number} of $T$ in the sense of GIETs, namely the path on the Rauzy diagram which has as vertices the combinatorial data $(\pi_n)_n$ of $(\mathcal{R}^n T)_n$ is \emph{infinitely-complete}, namely all labels $\{ 1, \dots, d\}$ of the subintervals $(I^t_j)_j$ and  $(I^b_j)_j$ appear as \emph{winners} of an elementary step of Rauzy-Veech induction infinitely many times (see the lecture notes \cite{Yoc:Clay} by Yoccoz for definitions and details). If $T$ is Keane but not semi-conjugated to a standard IET, it can be infinitely renormalizable, but its rotation number may fail to be infinitely complete and in this case it is possible that $\lim_{n\to \infty}\lambda_n = \lambda_\infty>0$.}). 
More renormalisation operators 
are obtained \emph{accelerating} $\mathcal{V}$ i.e.~their iterates have the form $\mathcal{R}^k(T):=\mathcal{V}^{n_k}(T)$ where $(n_k)_k$ is a suitably chosen sequence of iterates of $\mathcal{V}$ (which depends on $T$). 
 Classical accelerations of $\mathcal{V}$ are the \emph{Zorich acceleration} (which we will denote by $\mathcal{Z}$, which is important since $\mathcal{Z}: \mathcal{I}_d\to \mathcal{I}_d$ admits an absolutely continuous \emph{finite} invariant measure) and the \emph{positive} acceleration defined in \cite{MMY3} (the slowest which gives strictly \emph{positive} incidence matrices, see \S~\ref{sec:DC}). It is the latter which we will use as renormalisation operator in this paper.  

\smallskip \noindent {\it Dynamical partitions and Rohlin towers.} 
Let $T$ be a GIET such that the orbit $(\mathcal{R}^n(T))_{n\in\mathbb{N}}$ is well defined. 
The chosen renormalisation algorithm operator allows to produce a sequence of  \textit{dynamical partitions} and Rohlin towers presentations, defined as follows. 
Let $(I_n)_n$ be the nested sequence of inducing intervals and let  $I_n^j$, for $j=1,\dots, d$  be the subintervals exchanged by the first return map $T_n$ of $T$ on $I_n=[0,\lambda_n]$. For each 
$1\leq j\leq d$, $T_n$ restricted to $ I_n^j$ is equal to $T^{q_n^j}$  where $q_n^j$ is the first return time of $I_n^j$ to $I_n$ under $T$, i.e.~the minimum $q\geq 1$ such that $T^{q}(x)\in I_n$ for some (hence all) $x\in I_n^j$.
 Let us define  the {dynamical partition} $\mathcal{P}_n$ of \emph{of level} $n$ by
$$ \mathcal{P}_n := \bigcup_{j=1}^d{\mathcal{P}^j_n}, \qquad \text{where}\quad \mathcal{P}^j_n := \{ I_n^j, T(I_n^j),  T^2(I_n^j), \cdots, T^{q_{n}^j-1}(I_n^j)  \}. $$
 One can verify that $\mathcal{P}_n$ is a partition of $[0,1]$ into subintervals  and that, for each $1\leq j\leq d$, the collection  $\mathcal{P}^j_n$ is a \emph{Rohlin tower} by intervals, i.e.~a collection of disjoint intervals which are mapped one into the next by the action of $T$.  
 We say that the number $q_n^j$ of intervals in a tower is the \emph{height} of the (Rohlin) tower $\mathcal{P}^j_n$.  
Thus, $\mathcal{P}_n$ also gives a representation of $[0,1]$ as a \emph{skyscraper}, i.e.~a collection of Rohlin towers, for $T$.
Notice that if $n>m$, then the partition  $\mathcal{P}_n$  is a refinement of $\mathcal{P}_m$. 

\smallskip
Let us denote by $\mesh ({P}_n) $ the \emph{mesh} of the partition $\mathcal{P}_n$, namely
$\mesh({\mathcal{P}_n}): =\sup \{ |I|,$ where $ I$  is a floor of $\mathcal{P}_n\}$.
We record for future use the following observation.
\begin{remark}\label{meshtozero}
If a $T$  is topologically conjugate to a minimal Keane IET $T_0$, then $\mesh ({{P}_n}) $ goes to zero as $n$ grows. This follows simply because minimality is preserved by topological conjugacies and if \emph{mesh} $\mesh{{P}_n} $ fails to go to zero, the  dynamical towers for $T$ (which are well defined for a Keane GIET) would yield a \emph{wandering interval}, i.e.~one could find $J$ with $|J|>0$ such that the iteratates $T^n(J)$ with $n\geq n_0$ are all disjoint, contradicting minimality.
\end{remark}

\smallskip \noindent {\it Incidence matrices.} 
Given a renormalisation operator, such as $\mathcal{V}$, $\mathcal{Z}$ or any $\mathcal{R}$ defined as one of its acceleartions, we can define a sequence of \emph{incidence matrices} $(A_n)_{n\in\mathbb{Z}}$, where each $A_n \in SL(d,\mathbb{Z})$ is a $d\times d$ matrix with integer, non-negative entries, as follows.
If  $(T_n)_n$ is the sequence of IETs obtained inducing on the sequence $(I_n)_{n}$ of intervals given by the induction, 
then  the entry $(A_n)_{ij}$ of the $n^{th}$ incidence matrix $A_n$ gives the number of visits of  the orbit of $x\in I_n^j$ under $T_n$ to $I_n^i$ up to its first return to $I_{n+1}$. 

The incidence matrix entries have also an interpretation in terms of  of Rohlin towers: the Rohlin towers at step $n+1$ can be obtained by a \emph{cutting and stacking}\footnote{We do not give here a precise definition of \emph{cutting and stacking}, which is a standard construction in the study of ergodic theory and in particular of \emph{rank one} and, more in general, \emph{finite rank} dynamical systems.}, which construction from the Rohlin towers at step $n$: more precisely, for any $n\in\mathbb{N}$ and $1\leq i,j\leq d$, the Rohlin tower over $I^j_n$ is obtained stacking \emph{subtowers} of the Rohlin towers over $I^j_n$ (namely sets of the form $\{ T^k J, 0\leq k<q^{(n)}_j\}$ for some subinterval $J\subset I^j_n$). 
 Then  $ (A_n)_{ij}$ is the number of subtowers of the Rohlin tower over $I^i_n$ inside the Rohlin tower over $I^j_{n+1}$. It follows that the Rohlin tower over  $I_{n+1}^j$ is made by stacking exactly  {\color{black}$\sum_{i=1}^d (A_n)_{ij}$} subtowers of Rohlin towers of step $n$. Notice that {\color{black}$\sum_{i=1}^d (A_n)_{ij}$} is the sum of the entries
of the $j^{th}$ \emph{column} of the matrix $A_n$. 

In view of this remark, one can see that the vector column vectors $(q_n)_{n\in \mathbb{N}}$ which has as entries the heights $q_n^j$ (i.e.~return times), for $1\leq j\leq d$,  of Rohlin towers for $T$ with base $I_n$ satisfy the relations
\begin{equation} \label{heightsrelation}
{q}_n =  A(m,n) \,  q_m, \qquad \mathrm{for}\ 0\leq m\leq n, \ \  \textrm{where} \ \ A(m,n):= A_{n-1} A_{n-1} \cdots A_m
\end{equation}
and $q_0$ is by convention the vector with all entries $q^j_0=1$.  
On the other hand, the length (column) vectors $(\lambda_{n})_{n\in\mathbb{N}}$ that give the lengths $\lambda_{n}^j=|I_n^j|$ of the exchanged intervals of the induced map $T_n$ on the sequence of inducing intervals $\{ I_n, n\in\mathbb{N}\}$ given by the Zorich acceleration 
are governed by the \emph{transpose} matrices $(A_n^\dag)_n$  (where $A^\dag$ denotes the transpose of $A$) of the incidence matrices $(A_n)_n$, namely we have
\begin{equation}\label{lengthsrelation}
\lambda_m= A(m,n)^\dag \lambda_n, \qquad \text{where}\quad A(m,n)^\dag:= A_{m}^\dag A_{m-1}^\dag \cdots A_{n-1}^\dag , \qquad \text{for\ every}\  0\leq m<n.
\end{equation}

\smallskip
 Notice that using of the norm $\Vert v \Vert= \sum_{j}|v_j|$ on a column  vector $v$ with entries $v_j$, $1\leq j\leq d$, and the norm $\norm{A} = \sum_{i,j}|A_{ij}| $ on matrices, since return times as well as lenghts are positive numbers and $\Vert A^\dag\Vert = \Vert A\Vert$, we get the two following relations 
\begin{equation}\label{normproduct}
\max_j \, q_{n}^j \leq \Vert q_{n} \Vert  \leq \Vert A{(m,n)} \Vert q_{m} \Vert, \qquad \text{for\ any}\ 0\leq  m<n, 
\end{equation}
\begin{equation}\label{lenghts_uppbound}
\max_j \, \lambda^{m}_j \leq \Vert \lambda_{m} \Vert  \leq \Vert A{(m,n)^\dag } \Vert \, \Vert \lambda_{n} \Vert = \Vert A{(m,n)} \Vert \, \Vert \lambda_{n} \Vert, \qquad \text{for\ any}\ 0 \leq m<n. 
\end{equation}
Moreover, if the matrix $A_{n+1}>0$ is \emph{positive}, i.e.~all its entries $(A_{n+1})_{ij}$ are strictly positive, one can also show (see e.g.~\cite{MMY} or \cite{Yoc:Clay}) that $\min_j \, \lambda^{n+1}_j \geq {\max_i \, \lambda^{n}_i }/{\Vert A_{n+1}\Vert }$ and therefore, since ${\max_i \, \lambda^{n}_i }\geq \Vert \lambda^{n}\Vert/d$, from \eqref{lenghts_uppbound} we get also
\be\label{lenghts_lowbound}
\min_j \, \lambda^{n+1}_j \geq  \frac{\Vert \lambda^{n}\Vert}{d \Vert A_n\Vert}
\geq 
  \frac{\Vert \lambda^{m}\Vert}{d \Vert A_n\Vert   \Vert A{(m,n)} \Vert }, 
 \qquad \text{for\ any}\ 0\leq m<n. 
\ee

\smallskip
When the renormalisation operator is Rauzy-Veech induction $\mathcal{Z}$, we will use the notation $(Z_n)_n$ for the incidence matrices of Rauzy-Veech induction and $Q(n,m):= Z_{n-1}\cdots Z_m$ for their products. Notice that following \emph{cocycle relation} then holds for any triple of integers $n,m,p$:
\begin{equation}\label{cocyclerel}
Q(n,p)=Q(m,p)\, Q(n,m), \qquad \text{for\ all}\ n<m<p.
\end{equation}    
\noindent  The map  $Z$: $\mathcal{I}_d \rightarrow SL(d,\mathbb{Z})$ is  indeed a  \emph{cocycle} over $\mathcal{Z}$, that we call the \emph{Zorich cocycle}\footnote{Notice that there is also another cocycle, also sometimes called Zorich cocycle, which transforms \emph{lengths} and is actually the transpose inverse of the cocycle here defined.} (also sometimes referred to as \emph{Kontsevich-Zorich} cocycle).

\smallskip
\noindent 
{\it Birkhoff sums and special Birkhoff sums.}\label{sec:sums}
Let $T$ be a $d$-GIET with no connections and let $f:[0,1]\to \mathbb{R}$ be an observable. We will assume that $f$ is piecewise-$\mathcal{C}^{1}$, more precisely that its restriction $f_i$ to each  continuity interval $I^t_i$ for $T$, for each $1\leq i\leq d$, is continuous   and extends to a differentiable function on the  closure $\overline{I^t_i}$. In this paper we will be interested in studying the Birkhoff sums of the function $f:=\log DT$; notice that  when $T\in \mathcal{X}^r_d $ with $r\geq 1$,  $f=\log DT$ satisfies the above properties.

\smallskip
\noindent For each $n\in \mathbb{Z}$, we define the $n^{th} $ \emph{Birkhoff sum} of $f$ over $T$ as the functions
\begin{equation}\label{def:BS}
S_n f:= 
\begin{cases} \sum_{j=0}^{n-1}f \circ T^j , & \text{if}\  n>0;\\
0 , & \text{if}\  n=0;\\
\sum_{j=1}^{n}f \circ T^{-j} , & \text{if}\ n<0;\\
\end{cases}
\end{equation}
The definition of the Birkohff sums $S_nf$ for $n\leq 0$ is given so that $(S_nf)_{n\in\mathbb{Z}}$ are a $\mathbb{Z}$-additive cocycle, i.e.~satisfy 
$$S_{n+m}\, f = S_n\,  f +  S_m f\circ T^n,\qquad \text{ for\ all \ } n,m \in \mathbb{Z}.$$
Notice also that, for any  $m \in \mathbb{N}$ 
\begin{equation}\label{backwardsBS}
S_m f(x ) = S_{-m} f (T^{m} u), \qquad \text{for \ all}\ x \in [0,1].
\end{equation}

\smallskip
The Birkhoff sums $S_n f$, $n\geq 0$, can be studied via renormalisation exploiting the notion of \emph{special Birkhoff sums} that we now recall. 
If $\{I_n, \ \ n\in\mathbb{N}\} $ 
 is the sequence of inducing intervals given by the renormalisation algorithm, 
the (sequence of) \emph{special Birkhoff sums} $f_n$, $n\in\mathbb{N}$, is  the sequence of functions 
$f_n: I_n \to \mathbb{R}$ 
obtained \emph{inducing} $f$ over the first return map $T_n$, namely given by
\begin{equation} \label{eq:sBS}
f_n(x):= S_{q^j_n}(x) =\sum_{\ell=0}^{q_n^j-1} f \left(T_n^\ell( x)\right), \qquad \text{if}\ x\in I^j_{n},\quad \text{for\ any}\ 1\leq j\leq d, \ n\in\mathbb{N},
\end{equation}
where $q_n^j$ is as above the first return time of any $x\in I_n^j$ to $I_{n+1}$. Thus one can think of $f_n(x)$ as the Birkhoff sum of $f$ \emph{along the Rohlin tower} of height $q_n^j$ over $I_n^j$. 

Given a the $n^{th}$-special Birkhoff sum $f_n$,  we can build $f_{n+1}$ from $f_n$ and $T_n$ by writing 
\begin{equation}\label{SBSdecomp}
f_{n+1} (x) = \sum_{k=0}^{(A_n)_{ij}-1} f_{n} \left(({T}_n)^k(x)\right), \qquad  \text{for\ any}\ x\in I_{n+1}^j,
\end{equation}
where $(A_n)_{ij}$  is the $(i,j)$ entry of the $n^{th}$ incidence  matrix $A_n$. 
  This relation, which can be proved simply recalling the definition of special Birkhoff sums and Birkhoff sums, can be understood  in terms of \emph{cutting and stacking} of Rohlin towers: the relation indeed mimics at the level of special Birkhoff sums the fact (recalled previously) that the Rohlin tower over  $I^j_{n+1}$ is obtained by stacking $(Z_n)_{ij}$ subtowers of the Rohlin tower over  $I^{n}$ and hence, correspondingly, the special Birkhoff sum $f_{n+1} (x) $ is obtained as sum of $(Z_n)_{ij}$ values of the special Birkhoff sum $f^{n}$ at points of $I^i_{n}$. 

\smallskip 
\noindent {\it Decomposition of Birkhoff sums.} 
Special Birkhoff sums can be used as follows as fudamental \emph{building blocks} to study Birkhoff sums, see e.g.~\cite{Zo:dev, MMY, Ul:mix, Ul:abs, MarmiYoccoz, MUY}.  In  the special case in which $x_0\in I_{n_0+1}$ 
 for some $n_0\in\mathbb{N}$, 
 if follows (exploiting recursively \eqref{SBSdecomp}) that, if $1\leq j\leq d$ is such that $x_0\in I^{(n_0+1)}_j$, the Birkhoff Birkhoff sum $S_r(x_0)$  for any $0\leq r\leq q^{(n_0+1)}_j$ can be decomposed into special Birkhoff sum 
as 
\begin{equation}\label{geometric}
S_r f(x_0)=\sum_{n=0}^{n_0} \sum_{\ell=0}^{a_{n}-1}  f_n \left(x_n^\ell \right) , \qquad \text{where} \ 0\leq a_n\leq \Vert A_n\Vert , \quad x_n^\ell \in I_n,\ \text{for}\ 0\leq \ell \leq a_n -1.
\end{equation}
We will refer to \eqref{geometric} as \emph{geometric decomposition} of $S_r f(x_0)$. 
For the general case of a  Birkhoff sum $S_r f(x)$ for any $x\in [0,1]$ and $r\in\mathbb{N}$, we can define   
$n_0=n_0(x,r)$ to be the maximum $n_0\geq 1$ such that $I^{(n)}$ contains at least \emph{two} points of the orbit $\{T^i x, 0\leq i<r\}$. (This guarantees that $r$ is larger than the smallest height of a tower over $I^{(n_0)}$, but at the same time that it is smaller than a over $I^{(n_0+1)}$. Then, if $x_0 =T^{i_0}(x)$ is one of the points in $I^{(n_0)}$ we can split the Birkhoff sum $S_r f(x)$ into two sums of the previous form, one for $T$ and the other for $T^{-1}$. From this geometric decomposition, we then get the following estimate: 
\begin{equation}\label{geometricestimate}
|S_r f(x)|\leq 2 \sum_{n=0}^{n_0}  \Vert A_n\Vert \, \Vert f_n\Vert, \qquad \text{for any}\ x\in [0,1]\, \backslash \, I_{n_0+1}.
\end{equation}  
}

\subsection{Non-linear tools}\label{sec:nl}
For any $\mathcal{C}^2$ map $f: I \longrightarrow J$ where $I$ and $J$ are open intervals such that $\mathrm{D}f$ does not vanish, one can define the \emph{non-linearity}\footnote{The function $\eta_f$ is called non-linearity since it measures how far $f$ is from being affine and has the property that $\eta_f \equiv 0 $ if and only if $f$ is an affine map.} function $\eta_f$ to be the function $\eta_f:I\to \mathbb{R}$ given by 
\begin{equation}\label{def:nonlinearity}
\eta_f(x):= (\mathrm{D} \log \mathrm{D}f)(x) = \frac{\mathrm{D}^2f(x)}{\mathrm{D}f(x)}.
\end{equation} 

The non-linearity satisfy the following \emph{distribution property}: if $f: I \longrightarrow J$ and $g : J \longrightarrow K$ are diffeomorphisms of class $\mathcal{C}^2$, then
\begin{equation}\label{NLdistribution}
 \int_{I}{\eta_{g \circ f}} = \int_{I}{\eta_f}  +  \int_{J}{\eta_g}.
\end{equation}

 Given a $\mathcal{C}^2$ interval exchange map  $T: [0,1]\to [0,1]$, defined on the continuity intervals $I_j\subset [0,1]$  we  define the non-linearity $\eta_T$ to be the (bounded) piecewise continuous map from $[0,1]$ to $\mathbb{R}$ given by 
$$\eta_T(x):=\eta_{T_j}(x), \qquad \mathrm{if} \ x\in I_j, \quad 1\leq j\leq d,$$ 
where $T_j: I_j \to [0,1]$ are the  branches of $T$ obtained restricting $T$ to its continuity intervals. 

We subsequently define the \emph{total non-linearity} $|N|(T)$ of $T$ to be
$$ 
|N|(T) := \int_0^1{|\eta_T(x)|dx}.$$ 
\noindent For any $r\geq 2$, $\pi \in \mathfrak{S}_r$, 
the  total non-linearity 
is invariant under rescaling by (restrictions) of affine maps, so that in particular if $a,b$ are  (restrictions of) linear maps, 
$\overline{N}(a\circ T\circ b) = \overline{N}(T)$.


\section{Regularity of the conjugacy }\label{sec:proof}
In this section we prove $\mathcal{C}^{1+\alpha }$-regularity of the conjugacy to a full measure set of IETs under the assumption that there is exponential convergence of renormalisation, i.e.~we prove Theorem~\ref{thmb}. We begin the section explain in \S~\ref{sec:strategy} the outline of the proof. 

\subsection{Outline of the proof} \label{sec:strategy}
Let us  consider a Keane standard IET $T_0$ and  assuming that the GIET $T$ and $T_0$ are differentiably conjugate. Let $h$ be the diffeomorphism of $[0,1]$ such that $h\circ T=T_0\circ h$. 
To show that, for some $0<\alpha<1$, $h$ 
is $ \mathcal{C}^{1+\alpha}$, we then have to show that it derivative $Dh$ is $\alpha-$H{\"older} continuous.  
We will first consider the function $\varphi:=\log Dh $ and show that $\varphi$ is $\alpha$-H{\"older} for some $0<\alpha<1$, namely
that there exists $C>0$ such that it satisfies 
$$|\log Dh(x)- \log Dh(y)|=|  \varphi(x)- \varphi(y)|\leq C |x-y|^\alpha
$$
for every $x,y\in I_0$.  Thus, since the RHS as well as $h$ are bounded, the same estimate holds (up to changing the constant) also for $h$ and shows that also $h$  is $\alpha-$H{\"older} continuous.

In order to  estimate $|\varphi(x)-\varphi(y)|$  and 
compare it with $|x-y|$, we exploit renormalisation and more precisely a suitable acceleration of Rauzy-Veech induction (described below).  As discussed in \S~\ref{sec:diff}, instead than 
using  \emph{spatial decompositions} of the interval $[x,y]$ as in Marmi-Yoccoz work \cite{MarmiYoccoz}, 
we use \emph{time-decomposition} of Birkhoff sums of the function $f=\log DT$  (as in the classical case of circle diffeomorphisms, see e.g.~\cite{KT:Her}). Since we do not have the freedom of choosing $x_0$ (due to the lack of homogeneity of GIETs), we build \emph{approximations} to both $x$ and $y$ through points in $\mathcal{O}_T(0)$ (which we call \emph{single orbit approximations}, see \S~\ref{sec:appr} and in particular Propostion~\ref{prop:appr} for details). Even though in the (G)IET setting  the structure of dynamical partitions and the relation between space decompositions and first return times is not well understood as in the case of rotations, 
 through the use of the (two steps of) the positive acceleration, and the Rohlin-tower structure of IETs when we assume a Roth-type growth (which guarantees a good control on \emph{ratios} of towers lenghts and heights), we can construct suitable approximations by points $x_n\to x$ and $y_n\to y$ in $\mathcal{O}_T(0)$ for which we can replicate a control of Birkhoff sums as good as that available for circle diffeos (see in particular Propostion~\ref{prop:specialvariation}).

The acceleration of Rauzy-Veech induction that we exploit as renormalisation operator is the \emph{positive} acceleration: 
 when $T$ is Keane and linearizable,  there exists indeed a sequence $(n_k)_k$ of Zorich induction times such that the incidence matrices $A_{k}:=Q(n_{k-1}, n_{k})$ can be written as product of two strictly positive matrices. The Diophantine-like condition that we assume is that this sequence has subexponential growth, namely for any $\epsilon>0$ there exists $C_\epsilon>0$ such that
 $\Vert A_{k}\Vert \leq C_\epsilon e^{k\epsilon}$ for all $k\in \mathbb{N}$ and that their products have exponentialy growth (see Definition~\ref{def:DC} in \S~\ref{sec:DC} for details).  This type of growth is known to be typical, i.e.~this condition is of full measure (see Lemma~\ref{lemma:fullDC} in \ref{sec:DC}). 

Given any two $x,y\in  I_0:=[0,1]$, we will first choose a \emph{scale} of renormalisation, namely one of the accelerating times $n_{k_0}$ so that $|x-y|$ is comparable with the size of $I_{n_{k_0}}$ of the $n_{k_0}^{th}$ inducing subinterval (see \S~\ref{sec:spacescale}).  Exploiting the Diophantine-type assumption, one can show that the lenghts $|I_{n_{k}}|$ decay exponentially in $k$;  this will provide a lower bound of the form
\begin{equation}\label{expboundspace}
|x-y|\geq c \lambda_1^{k_0}, \qquad \text{where}\ 0<\lambda_1<1 ,
\end{equation}
see Lemma~\ref{lemma:denom} in \S~\ref{sec:spacescale}. 
We will then show (see Proposition~\ref{prop:numerator}) that 
\begin{equation}\label{expboundphi}
|\varphi(x)-\varphi(y)|\leq C \lambda_2^{k_0},\qquad  \text{for} \ 
 \varphi:= \log Dh,
\end{equation}
\noindent for some $0<\lambda_2<1$. 
Thus, if we define $\alpha:= \log \lambda_2/\log \lambda_1>0$, we have that
 $\lambda_1=\lambda_2^\alpha$. 
Then, combining \eqref{expboundphi} and \eqref{expboundspace}, we have that
$$
{|\varphi(x)-\varphi(y)|}\leq C {\lambda_2^{k_0}} =  C (\lambda_1^{k_0})^\alpha \leq C' |x-y|^\alpha
$$
for some $C'>0$. If $\alpha>1$, this forces $\varphi=\log DT$ to be constant, which is a contradiction, so we deduce now that $\alpha\leq 1$. Therefore we conclude that $\varphi:=\log Dh$ (and hence, as explained earlier, also $Dh$) is $\alpha$-H{\"older} with $0<\alpha \leq 1$. The heart of the proof is now to show that \eqref{expboundphi} holds.  

To estimate the difference $|\varphi(x)-\varphi(y)|$ we consider first differences of the form  $|\varphi(x^i)-\varphi(x^j)|$ where $x^i, x^j$ belong to the orbit of $0$ under $T$, namely $x^i=T^i(0)$ and  $x^j=T^j(0)$. Notice that
the function $\varphi:=\log Dh$ satisfies the cohomological equation 
\begin{equation}\label{eq:cohm}
\varphi(Tx) - \varphi(x) = f(x) \qquad \text{where} \ f(x):= -\log DT (x)
\end{equation}
(which follows from the conjugacy equation, differentiating with the chain rule and then taking log). Therefore,  in this special case 
 we can use that, assuming without loss of generality that $j>i$, 
$$
|\varphi(x^j)-\varphi (x^i)|=|\varphi (T^j(0))-\varphi (T^i(0))|=|S_{j-i} f (x^j)|.
$$
 To estimate the Birkhoff sum $S_{j-i} f (x^j)$  of the function $f:= -\log DT $, we will exploit a geometric decompositions into special Birkhoff sums (in the spirit of the decomposition recalled in the background section \S~\ref{sec:renormalisation}), which will show that there exists  some $k_0=k_0(j-i)$ such that
$$
|S_{j-i} f (x^j)| \leq  2 \sum_{k\geq k_0(j-i)} || A_{k+1}|| \, || A_{k+2}|| \Vert f_k\Vert_{\infty} .
$$
From this expression, we see that the estimates of $S_{j-i} f (x^j)$ depend (exponentially) on the \emph{smallest} $k_0=k_0(j-i)$ such that the special Birkhoff sum $f_{n_k}$ appears in the decomposition: indeed, one can show that exponential convergence of renormalisation implies that special Birkhoff sums $f_k$ decay exponentially (see Lemma~\ref{SBSviaR}) and the Diophantine-like assumption that  $\{ \Vert A_k\Vert, \ \ k\in \mathbb{N}\}$ grow subexponentially, ensures that also the above series is geometric (and hence has order of the largest term, which corresponds to the smallest $k$ involved, namely $k_0=k_0(i-j)$). A hidden technical difficulty, in this part, is that the Birkhoff sum $S_{j-i} f (x^j)$ cannot be decomposed into full Birkhoff sums with $k\geq k_0$ when $x^j$ is not a point of one of the inducing intervals. In the decomposition we use, we also use \emph{broken special Birkhoff sums} (see \S~\ref{sec:sameorbit}, in particular Definition~\ref{def:brokenBS}). To control these \emph{broken sums}, we then have to use non-linear estimates (namely estimating integrals of non-linearity, see \S~\ref{sec:nl}) and exploit the full force of the exponential convergence of renormalisation assumption in the $\mathcal{C}^2$-norm (see the proof of Proposition~\ref{prop:reminderestimate} for details).

The final and key part of the proof is then to show that we can approximate any $x,y\in [0,1]$ such that the \emph{scale} of $|x-y| $ is $n_{k_0}$ with sequences of points $x_n\to x$ and $y_n\to y$ which all belong to the orbit $\mathcal{O}_T(0)$ of zero and such that, for all $n$, the $|\varphi(x_n)-\varphi (y_n)|$ are Birkhoff sums of $f$ which can all be decomposed into special Birkhoff $f_{k}$ with $k\geq k_0$. Thus, the estimates for general $x,y$ can be deduced by estimates for points in the orbit $\mathcal{O}_T(0)$ of zero. We refer the reader to the beginning of \S~\ref{sec:Holder} for a more detailed outline of the steps in this construction and the proof of the desired H{\"o}lder estimates. 

This concludes the overview of the proof. 
We conclude the section with an overview of how the proof is organized into subsections.

\medskip
\noindent {\it Organization  of the proof.} The proof of Theorem~\ref{thmb} is organized as follows. 
We first describe, in \S~\ref{sec:DC},  the required Diophantine-like condition in terms of growth of the matrices of the positive acceleration of Rauzy-Veech induction (see Definition~\ref{def:DC}) and show that it has full measure (see Lemma~\ref{lemma:fullDC}). 
 In \S~\ref{sec:consequences} we prove some preliminary consequences of   the assumption of exponential convergence of renormalisation: we show that $\mathcal{C}^1$-exponential convergence  implies exponential convergence of the special Birkhoff sums of $f:=\log DT$, while $\mathcal{C}^1$-exponential convergence allows to control non-linearity 
 more specifically, integrals of non-linearity within a tower, see Proposition~\ref{prop:reminderestimate}). 
 The H{\"older} estimates for $\varphi:= \log Dh$ are proved in \S~\ref{sec:Holder}, first in the special case when $x,y$ belong to $\mathcal{O}_T(0)$, then, through a crucial construction of approximations $x_n\to x$ and $y_n\to y$, for general points $x,y$. A local overview of the strategy of this part of the proof is given at the beginning of  \S~\ref{sec:Holder}.

\subsection{The Diophantine-like condition}\label{sec:DC}
Let us now define the full measure condition on the linearization $T_0$ of $T$ which we require to prove Theorem~\ref{thmb}.  This condition simply guarantees that the \emph{growth rate} of the incidence matrices of the renormalisation operator $\mathcal{R}$ given by the \emph{positive acceleration} of Rauzy-Veech induction is \emph{typical} (see the proof of Lemma~\ref{lemma:fullDC} for details). We use in this definition the terminology and the notations introduced in the background section  \S~\ref{sec:renormalisation}.

\begin{definition}[Typical Positive Growth]\label{def:DC}
Let us say that a Keane GIET $T$  has \emph{typical positive growth} (or for short, TPG) if there exists  an increasing sequence of accelerating times  $(n_k)_{k\in \mathbb{N}}$ of Rauzy-Veech induction such that, the incidence matrices $(A_k)_{k\in \mathbb{N}}$ of the associated accelerated renormalisation algorithm 
 have the following properties:
\begin{itemize}
\item[(P)] {\emph{Positivity}}: for every $k\in \mathbb{N}$, $A_k>0$ is matrix with strictly positive entries;
\item[(S)] {\emph{Subexponential growth of the matrices}}: for every $\epsilon>0$, there exists $C_\epsilon>0$ such that
$$
\Vert A_k\Vert \leq C_\epsilon \, e^{\epsilon k}, \qquad \text{for\ all \ } k\in \mathbb{N};
$$
\item[(E)]{\emph{Exponential growth of their products}}:  there exists $\rho>1$ and $K>0$ such that
$$
\Vert A(0, k)\Vert  =\Vert A_{k-1} A_{k-2} \cdots A_{0} \Vert = \Vert Q(0, n_k)\Vert 
\leq K \rho^ k, \qquad \text{for\ all \ } k\in \mathbb{N}.
$$
\end{itemize}
\end{definition}
The existence of a sequence of times $(n_k)_{k\in \mathbb{N}}$  which produce a \emph{positive} acceleration, namely whose incidence matrices satisfy the positivity in $(P)$,  is a well known property of all (standard) IETs which satisfy the Keane condition (first  proved by Marmi, Moussa and Yoccoz in \cite{MMY}, see also \cite{Yoc:Clay, Vi:IET}). Thus, if a Keane GIET $T$ is linearizable, i.e.~topologically conjugated to a standard (Keane) IET  $T_0$, the existence of a sequence for which  $(P)$ is automatically guaranteed. On the other hand,  the request that the matrices $(A_k)_k$ (and their products) satisfy the restriction on their growth rates imposed by $(S)$ and $(E)$, 
impose what we call a \emph{Diophantine-like condition} on the (G)IET (see e.g.~the \cite{Ul:ICM} for a survey of conditions of this type).     
The  type of \emph{growth} requested by the TPG of Definition~\ref{def:DC} is well known to be \emph{typical}, i.e.~satisfied by  a full measure set of linearizations $T_0$:

\begin{lemma}\label{lemma:fullDC}
Lebesgue almost every $T_0$ in $\mathcal{I}_d$ has TPG (Typical Positive Growth).  
\end{lemma}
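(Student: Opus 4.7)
\medskip
\noindent\textbf{Proof plan for Lemma~\ref{lemma:fullDC}.}
The plan is to deduce the three conditions (P), (S), (E) of Typical Positive Growth from classical ergodic and Lyapunov-theoretic properties of the Rauzy--Veech renormalisation on the parameter space $\mathcal{I}_d$. Property (P) is essentially tautological: by construction the positive acceleration is the first-return acceleration of $\mathcal{V}$ to the set of products of elementary Rauzy--Veech matrices with strictly positive entries, and Marmi--Moussa--Yoccoz \cite{MMY3} showed that every Keane IET admits such a sequence of acceleration times, so since a.e.~$T_0\in \mathcal{I}_d$ is Keane, (P) holds for a.e.~$T_0$.

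For (S) and (E), the idea is to realise the positive acceleration $\mathcal{R}$ as an induced map of the Zorich renormalisation $\mathcal{Z}:\mathcal{I}_d\to \mathcal{I}_d$ and then apply Birkhoff's ergodic theorem and the Oseledets multiplicative ergodic theorem. Recall that $\mathcal{Z}$ preserves an absolutely continuous ergodic probability measure $\mu_Z$ on $\mathcal{I}_d$ (Zorich--Veech). Let $\Omega\subset \mathcal{I}_d$ be the subset on which the elementary Zorich step produces a strictly positive matrix only after possibly several steps; the positive acceleration is the first return of $\mathcal{Z}$ to the pull-back by two returns (to capture the ``two steps of the positive acceleration'' alluded to in \S\ref{sec:strategy}). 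By Kac's formula the return time $\tau:\mathcal{I}_d\to \mathbb{N}$ from $\mathcal{Z}$ to this return set is integrable with respect to $\mu_Z$, so the induced system $(\mathcal{R},\mu_R)$ is ergodic with a probability invariant measure $\mu_R$ obtained by normalising $\mu_Z$ on the inducing set.

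To obtain (S), I would observe that each $A_k$ is a product of $\tau_k:=n_k-n_{k-1}$ Zorich matrices; since $\log \|Z_\cdot\|$ is $\mu_Z$-integrable (this is the classical log-integrability of the Zorich cocycle, see e.g.~Zorich \cite{Zo:dev}, Veech) and $\tau$ is $\mu_Z$-integrable, the function $T_0\mapsto \log\|A_1(T_0)\|$ is $\mu_R$-integrable. Applying Birkhoff to the stationary ergodic sequence $X_k:=\log\|A_k\|$, the partial averages $\frac{1}{k}\sum_{j\leq k} X_j$ converge a.s.~to a finite constant, and the elementary identity $X_k/k = \frac{1}{k}\sum_{j\leq k}X_j - \frac{k-1}{k}\cdot\frac{1}{k-1}\sum_{j\leq k-1}X_j$ then forces $X_k/k\to 0$ a.s. This gives, for any $\varepsilon>0$, a (random) $C_\varepsilon>0$ such that $\|A_k\|\leq C_\varepsilon e^{\varepsilon k}$ for all $k$, which is precisely (S).

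For (E), I would apply the Oseledets multiplicative ergodic theorem to the cocycle $k\mapsto A(0,k) = A_{k-1}\cdots A_0$ over the ergodic probability preserving system $(\mathcal{R},\mu_R)$. Since $\log\|A_1\|$ is $\mu_R$-integrable (by the argument above), the top Lyapunov exponent $\lambda_1$ exists, is finite, and for a.e.~$T_0$ one has $\frac{1}{k}\log\|A(0,k)\|\to \lambda_1$; in particular, for any $\rho>e^{\lambda_1}$ there exists $K=K(T_0)>0$ with $\|A(0,k)\|\leq K\rho^k$ for all $k\in\mathbb{N}$, which is (E). The main technical point to substantiate is the $\mu_Z$-integrability of $\tau$ and of $\log\|Z_1\|$; both are standard consequences of the explicit form of the Zorich invariant density and the fact that the positive acceleration hits $\Omega$ after a geometrically controlled number of Zorich steps, and it is here that I would cite \cite{MMY3, Zo:dev, Vi:IET} rather than reproving these facts.
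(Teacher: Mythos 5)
Your proposal is correct and follows essentially the same route as the paper: both realise the positive acceleration as an induced (return-map) acceleration of the Zorich renormalisation with its finite absolutely continuous ergodic invariant measure, deduce integrability of the accelerated cocycle from integrability of the Zorich cocycle, and then obtain (E) from Oseledets' theorem and (S) from the Birkhoff ergodic theorem, with (P) coming from the Marmi--Moussa--Yoccoz positivity of the acceleration. The only cosmetic difference is that the paper pins down the inducing set concretely (first return to a subsimplex $\{\pi\}\times\Delta_A$ determined by a fixed positive product $A$, raised to a power so that $A$ appears as a prefix of every incidence matrix), whereas you describe the inducing set more loosely, but the mechanism is the same.
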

\noindent The proof of this Lemma uses  rather classical tools in the study of IETs  and relies essentially on Oseledets theorem (for the Zorich acceleration of Rauzy-Veech induction). We include the proof, but assume some classical tools and results from the theory of IETs beyond those recalled in the background section \S~\ref{background}, since they are only locally used for this proof (the interested reader can find more details e.g.~in \cite{Ul:mix}, see also the lecture notes~\cite{Yoc:Clay}). 
\begin{proof}[Proof of Lemma~\ref{lemma:fullDC}] 
Let $\mathcal{Z}:\mathcal{I}_d' \to \mathcal{I}_d'$ be Zorich acceleration \cite{Zo:gau} of Rauzy-Veech induction (defined for a full measure subset  $\mathcal{I}_d'\subset \mathcal{I}_d$ of Keane IETs in $\mathcal{I}_d$, see \cite{Zo:gau}) and let $Z(T)$ be the matrix associated to one step of Zorich induction. 
The map  $Z: \mathcal{I}_d\to SL(d,\mathcal{Z})$ is then a cocycle over $\mathcal{Z}$ known as \emph{Zorich cocycle} \cite{Zo:gau}.  

Let  $A>0$ be any fixed positive matrix $A>0$ which can occur as product of matrices of Zorich induction, i.e.~such that $A= Z(T) \cdots Z(\mathcal{Z}^n T )$ for some (Keane) IET $T$  with combinatorics $\pi$ and $n\in \mathbb{N}$. 
Consider the following subsimplex of the standard simplex $\Delta_d$:
$$
\Delta_A := \left\{ A \lambda, \quad \lambda\in \Delta \} \subset \Delta_d := \{ \lambda=(\lambda_1,\dots, \lambda_d) : \quad \lambda_i\leq 0 \ \text{for all}\ 1\leq i\leq d \ \text{and} \ \sum_{i=1}^d\lambda_i=1\right\}.
$$ 
Consider the first return map $\mathcal{Z}_A$ of Zorich induction to the subset $\{\pi\} \times \Delta_A$ of $\mathcal{I}_d$, which is well defined on a full measure set by Poincar{\'e} recurrence since $\mathcal{Z}$ preserves a finite invariant measure $\mu_\mathcal{Z}$ (the \emph{Zorich measure} whose finiteness was proved in \cite{Zo:gau}). Notice first of all that if we consider the $n^{th}$-power $\mathcal{R}:= \mathcal{Z}_A^n$ (which can be  extended to an operator almost everywhere defined on $\mathcal{I}_d$ by defining $\mathcal{R}(T)=\mathcal{V}^{r(T)}(T)$ where $r(T)$ is the first entrance time of $T$ to $\{\pi\}\times \Delta_A$ if $T$ is not already in $\{\pi\}\times \Delta_A$), then $\mathcal{R}$ is a positive acceleration, i.e.~its incidence matrices satisfy $(P)$ (taking the $n^{th}$ power guarantees that the whole matrix $A>0$ appears as prefix of the incidence matrix, i.e.~for every $k$ we can write $A_k= A B_k$ for some $B_k\geq 0$, so that $A_k>0$). 
  $\mathcal{R}:=\mathcal{I}_d'\to \mathcal{I}_d'$ almost everywhere defined given by 
$\mathcal{R}(T)=\mathcal{V}^{r}$

Since Zorich cocycle is integrable, i.e.~$\int_{\mathcal{I}_d} \log \Vert Z(T)\Vert \mathrm{d}\mu_{\mathcal{Z}}<+\infty$, also the accelerated cocycle $T\to A(T)$ over the acceleration $\mathcal{R}$ is   integrable. Thus, we can apply Oseledets theorem to conclude that, for any $\rho>\lambda_1$ where  $\rho_1>0$ is the top Lypaunov exponent of the accelerated cocycle (which actually satisfy $\rho_1>1$, because of the form of the Rauzy-Veech matrices) the growth rate prescribed by condition $(E)$ holds for almost every $T \in \mathcal{I}_d$. Now, the growth rate prescribed by condition $(S)$ for the matrices $(A_k)_{k}$ follows simply by the Birkhoff ergodic theorem.  

\end{proof}

\subsection{Consequences of convergence of renormalisation.}\label{sec:consequences}
Let $\mathcal{R}$ be any renormalisation operator in the sense of \S~\ref{sec:renormalisation}. Let $T$ be an \emph{infinitely renormalizable} GIET, i.e.~a GIET such that 
$\mathcal{R}^n(T)$ is well defined for every ${n\in \mathbb{N}}$ and let $(I_k)_k$ be the (infinite by assumption) sequence of inducing intervals produced by the renormalisation algorithm.  We will prove in this section two important consequences of the assumption that the orbit $(\mathcal{R}^n(T))_{n\in \mathbb{N}}$ of $T$ converges (exponentially fast) to the subspace of $\mathcal{I}_d$ of standard IETs, in $\mathcal{C}^1$ or $\mathcal{C}^2$ sense respectively (namely Lemma~\ref{SBSviaR} and Lemma~\ref{nLdecay} respectively). 

\smallskip
In the special case in which $f:=\log D T$, there is an important connection between  convergence of renormalisation in the $\mathcal{C}^1$-sense and convergence of the sequence $(f_k)_{k}$ of special Birkhoff sums inducing $f$ on $(I_k)_{k}$.

\begin{lemma}[$\mathcal{C}^1$ convergence and decay of special Birkhoff sums of $f=\log DT$, see \cite{SU}]\label{SBSviaR}{\color{black}
Let $T$ be an infinitely renormalizable GIET and let $f:=\log DT$. If 
$d_{\mathcal{C}^1} (\mathcal{R}^k(T), \mathcal{I}_d)$ converges to zero exponentially, then there exists $K>0, \rho<1$ such that 
$$\| f_k\|_{\infty}\leq K \rho^k, $$
i.e.~the sup-norm $\| f_k\|_{\infty}$ of the special Birkhoff sums $f_k$ on their domain $I_k$ also converges to zero exponentially.}
\end{lemma}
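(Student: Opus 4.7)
The plan is to reduce the decay of $\|f_k\|_\infty$ directly to the $\mathcal{C}^1$-convergence of the renormalised maps, via the chain rule and the rescaling relation \eqref{eq:renormalizedmap}. The key observation is that if $T_n$ denotes the first return map of $T$ to $I_n$, then on each subinterval $I_n^j$ we have $T_n = T^{q_n^j}$, so by the chain rule
\[
f_n(y) \;=\; \sum_{\ell=0}^{q_n^j-1} \log DT(T^\ell y) \;=\; \log DT^{q_n^j}(y) \;=\; \log DT_n(y),\qquad y\in I_n^j.
\]
Since $\mathcal{R}^n T(x) = T_n(\lambda_n x)/\lambda_n$ with $\lambda_n=|I_n|$, differentiation gives $D(\mathcal{R}^n T)(x) = DT_n(\lambda_n x)$. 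Taking logarithms, the sup of $\log D(\mathcal{R}^n T)$ on $[0,1]$ equals the sup of $\log DT_n$ on $I_n$, so
\[
\|f_n\|_\infty \;=\; \bigl\|\log D(\mathcal{R}^n T)\bigr\|_\infty.
\]

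The second step is to observe that for any standard IET $T_0\in\mathcal{I}_d$ with the same combinatorial datum $\pi$, the branches of $T_0$ are translations, so in the shape-profile coordinates its log-slope vector $\rho_{T_0}$ is zero and its profile components $\varphi_{T_0}^i$ are all the identity map of $[0,1]$. Writing each branch of $\mathcal{R}^n T$ on its $i$-th continuity interval as $(\mathcal{R}^n T)_i = a_i^n\circ \varphi_{\mathcal{R}^n T}^i\circ b_i^n$ with affine $a_i^n,b_i^n$, the chain rule yields
\[
\log D(\mathcal{R}^n T)_i(x) \;=\; \log \rho_{\mathcal{R}^n T}^i \;+\; \log D\varphi_{\mathcal{R}^n T}^i\bigl(b_i^n(x)\bigr).
\]

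The third step is to feed the hypothesis into this decomposition. By assumption there exist constants $C>0$, $0<\lambda<1$ and, for each $n$, a standard IET $T^*_n\in\mathcal{I}_d$ with $d_{\mathcal{C}^1}(\mathcal{R}^n T,T^*_n)\le C\lambda^n$; by the definition of $d_{\mathcal{C}^2}$ given in \S~\ref{giet} (and analogously for $d_{\mathcal{C}^1}$) this forces every summand in that distance to be $\le C\lambda^n$. In particular $|\rho_{\mathcal{R}^n T}^i-1|\le C\lambda^n$ and $\|\varphi_{\mathcal{R}^n T}^i-\mathrm{id}\|_{\mathcal{C}^1}\le C\lambda^n$, the latter giving $\|D\varphi_{\mathcal{R}^n T}^i-1\|_\infty\le C\lambda^n$. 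Using $|\log(1+t)|\le 2|t|$ for $|t|\le 1/2$, both $|\log\rho_{\mathcal{R}^n T}^i|$ and $\|\log D\varphi_{\mathcal{R}^n T}^i\|_\infty$ are bounded by $2C\lambda^n$ for $n$ large enough, so by the displayed decomposition $\|\log D(\mathcal{R}^n T)\|_\infty \le K\rho^n$ for suitable $K>0$ and $\rho=\lambda<1$ (after adjusting $K$ to absorb finitely many initial $n$). Combined with the first displayed identity, this yields $\|f_n\|_\infty\le K\rho^n$ as required.

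I do not expect any serious obstacle here: the argument is essentially a careful unpacking of the shape-profile coordinates. The only subtle point is verifying that $\mathcal{C}^1$-closeness in the profile coordinates really does control $\|\log D(\mathcal{R}^n T)\|_\infty$ uniformly in the rescaling parameters, which is why one needs to keep both the log-slope term $\log\rho_{\mathcal{R}^n T}^i$ and the derivative-of-profile term in the decomposition; the affine conjugation $b_i^n$ does not affect sup norms, so it plays no role in the estimate.
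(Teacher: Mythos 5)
Your argument is correct. Note that the paper itself does not prove this lemma: it refers to \cite{SU} for the proof, so there is no in-text argument to compare against; your write-up is a clean self-contained substitute along the natural lines. The two pillars are exactly right: (a) on $I_k^j$ the special Birkhoff sum of $\log DT$ telescopes by the chain rule to $\log DT_k$ (this uses, implicitly but harmlessly, that each floor $T^\ell(I_k^j)$, $0\leq \ell<q_k^j$, lies inside a single continuity interval of $T$, which is part of the Rohlin-tower structure), and together with $D(\mathcal{R}^k T)(x)=DT_k(\lambda_k x)$ this gives the key identity $\|f_k\|_\infty=\|\log D(\mathcal{R}^k T)\|_\infty$; (b) in the shape-profile coordinates every standard IET with the same combinatorics has ratio vector $\rho\equiv(1,\dots,1)$ and identity profiles, so $\mathcal{C}^1$-closeness to $\mathcal{I}_d$ bounds both $|\rho^i_{\mathcal{R}^k T}-1|$ and $\|D\varphi^i_{\mathcal{R}^k T}-1\|_\infty$ by $C\lambda^k$, and the branch decomposition $\log D(\mathcal{R}^k T)_i=\log\rho^i_{\mathcal{R}^k T}+\log D\varphi^i_{\mathcal{R}^k T}\circ b_i^k$ converts this into the claimed exponential bound. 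Two cosmetic remarks: you say the ``log-slope vector $\rho_{T_0}$ is zero,'' whereas in the paper's notation $\rho_{T_0}=(1,\dots,1)$ and it is $\log\rho_{T_0}$ that vanishes (your subsequent estimate $|\rho^i_{\mathcal{R}^k T}-1|\leq C\lambda^k$ is the correct one, so nothing breaks); and since $d_{\mathcal{C}^1}(\mathcal{R}^k T,\mathcal{I}_d)$ is an infimum you should pick a witness $T_k^*$ within, say, $2C\lambda^k$, which is what you in effect do and which costs only a constant.
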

\noindent We refer to \cite{SU} for the proof of the Lemma.


\vspace{2mm}

\paragraph{\bf Non-linearity decay via renormalisation} 
Assuming that we have (exponential) convergence of renormalisation with respect to the $\mathcal{C}^2$ distance (defined in  \S~\ref{giet}) is necessary to control the contribution of non-linear terms, in particular to control the decay of the (total) non-linearity. 
For the regularity estimates in the following \S~\ref{sec:Holder}, we will in particular need the following result. 

\smallskip

\begin{lemma}[$\mathcal{C}^2$ convergence and decay of non-linearity]\label{nLdecay}
Assume that $T$ is such that
for some $K_1>0$ and {$0<\rho_2<1$},  we have that $ d_{\mathcal{C}^2}(\mathcal{R}^{n}(T), \mathcal{I}_d ) \leq K_1 \, \rho_2^n$ for every $k\in \mathbb{N}$. 
Then, we have that a constant $C'$ such that
$$
\Vert \eta_{\mathcal{R}^n T}\Vert_{\infty} \leq C' \rho_2^n.
$$
\end{lemma}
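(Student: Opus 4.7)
\bigskip

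The plan is to exploit that every standard IET $T_0 \in \mathcal{I}_d$ is piecewise a translation, so $\eta_{T_0}\equiv 0$; equivalently, in the profile coordinates of \S~\ref{giet}, each profile entry $\varphi_{T_0}^i$ is the identity map of $[0,1]$. Since $\eta_T = D^2 T / DT$ is a second-order differential invariant depending continuously on $T$ in the $\mathcal{C}^2$ topology, and vanishes identically on the subspace $\mathcal{I}_d$, the assumed exponential $\mathcal{C}^2$ closeness of $\mathcal{R}^n T$ to $\mathcal{I}_d$ should transfer to exponential decay of $\eta_{\mathcal{R}^n T}$.

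First I would unpack the $\mathcal{C}^2$ distance from \S~\ref{giet}. For each $n$, pick $T_n^\ast \in \mathcal{I}_d$ realizing the infimum (up to a factor $2$): it must share the combinatorics of $\mathcal{R}^n T$ (otherwise $d_{\mathcal{C}^2}=+\infty$), and one may always choose $\lambda_{T_n^\ast} = \lambda_{\mathcal{R}^n T}$ since any positive vector summing to $1$ is a valid IET length vector. Because each profile coordinate of an IET is $\mathrm{id}_{[0,1]}$, the hypothesis implies in particular
\[
\max_{i}\,\bigl\|\varphi_{\mathcal{R}^n T}^{\,i}-\mathrm{id}\bigr\|_{\mathcal{C}^2} \;\leq\; d_{\mathcal{C}^2}(\mathcal{R}^n T,\mathcal{I}_d) \;\leq\; K_1\,\rho_2^{\,n}.
\]
Setting $\psi_n^i := \varphi_{\mathcal{R}^n T}^{\,i}$, this yields both $\|D^2\psi_n^i\|_\infty\leq K_1\rho_2^{\,n}$ and $\|D\psi_n^i-1\|_\infty\leq K_1\rho_2^{\,n}$. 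For $n$ large enough that $K_1\rho_2^{\,n}<1/2$ one has $D\psi_n^i\geq 1/2$ pointwise, hence
\[
\|\eta_{\psi_n^i}\|_\infty \;=\; \bigl\| D^2\psi_n^i / D\psi_n^i \bigr\|_\infty \;\leq\; 2K_1\,\rho_2^{\,n}.
\]

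Next I would translate this bound from the profile back to the branches of $\mathcal{R}^n T$. Each branch $(\mathcal{R}^n T)_i$ is an affine conjugate of $\psi_n^i$ via the normalization maps $a_i,b_i$ of \S~\ref{giet}, with $b_i$ of slope $1/L_i^{(n)}$, $L_i^{(n)}:=|I_i^t(\mathcal{R}^n T)|$. A direct chain rule computation, using that the non-linearity is invariant under affine post-composition and scales by the slope of the inner affine map under affine pre-composition, gives
\[
\eta_{(\mathcal{R}^n T)_i}(y) \;=\; \frac{1}{L_i^{(n)}}\,\eta_{\psi_n^i}\bigl(b_i(y)\bigr), \qquad y\in I_i^t(\mathcal{R}^n T),
\]
and therefore $\|\eta_{\mathcal{R}^n T}\|_\infty \;\leq\; 2K_1\,\rho_2^{\,n}\,/\,\min_i L_i^{(n)}$.

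The main obstacle, and the only extra ingredient needed, is a uniform lower bound on $\min_i L_i^{(n)}$ up to factors that may be absorbed into the exponent. Under the Typical Positive Growth condition of Definition~\ref{def:DC}, the lower bound \eqref{lenghts_lowbound} yields $\min_i L_i^{(n)} \geq 1/(d\,\|A_n\|)$, while property (S) gives $\|A_n\|\leq C_\epsilon e^{\epsilon n}$ for any $\epsilon>0$; choosing $\epsilon$ with $\rho_2 e^{\epsilon}<1$ (equivalently, mildly enlarging $\rho_2$ inside $(0,1)$) absorbs the subexponential loss. Combined with the earlier profile estimate and a finite-$n$ adjustment of the constant, this produces $\|\eta_{\mathcal{R}^n T}\|_\infty \leq C'\rho_2^{\,n}$ as claimed.
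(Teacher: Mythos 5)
Your argument is correct, and it is considerably more detailed than what the paper actually does: the paper's entire proof of Lemma~\ref{nLdecay} is the one-line remark that the bound is ``a direct consequence of the fact that $\eta_T:=D^2T/DT$'', i.e.\ it implicitly treats the $\mathcal{C}^2$-closeness to $\mathcal{I}_d$ as direct control of $D^2(\mathcal{R}^nT)$ and of the lower bound on $D(\mathcal{R}^nT)$ on the branches themselves. You instead track the point the paper glosses over: $d_{\mathcal{C}^2}$ is defined through the \emph{profile} coordinates $\varphi^i_{\mathcal{R}^nT}$ (rescaled to self-maps of $[0,1]$, with the IET profile equal to the identity), while $\eta_{\mathcal{R}^nT}$ in the sense of \S~\ref{sec:nl} is computed on the actual continuity intervals of $\mathcal{R}^nT$, and the affine rescaling produces the factor $1/L_i^{(n)}$; your scaling identity $\eta_{(\mathcal{R}^nT)_i}=\frac{1}{L_i^{(n)}}\,\eta_{\psi^i_n}\circ b_i$ is exactly right and is consistent with how the paper later uses $\eta_{T_j}=\eta_{\mathcal{R}^jT}/|I_j|$ in the proof of Proposition~\ref{prop:reminderestimate}. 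Controlling $\min_i L_i^{(n)}$ via positivity/balance \eqref{lenghts_lowbound} and the subexponential growth (S) of the TPG condition is then a genuinely needed extra input (under the lemma's literal hypotheses alone the rescaling factor is not controlled), and it costs you only a subexponential loss. The one small discrepancy to flag is that your conclusion holds with any rate $\rho_2'\in(\rho_2,1)$ rather than with $\rho_2$ itself, whereas the lemma as stated keeps the same $\rho_2$; this is harmless for every use in the paper (only exponential decay at \emph{some} rate below $1$ is ever invoked, and Proposition~\ref{prop:reminderestimate} introduces a new constant $\rho'$ anyway), but it is worth stating explicitly that the exponent is being enlarged.
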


\begin{proof}

This is a direct consequence of the fact that $\eta_T := \frac{D^2T}{DT}$.

\end{proof}

\subsection{H{\"o}lder estimates}\label{sec:Holder}
In this section we now show that $\varphi:=\log DT$ is H{\"o}lder. Let us first give an overview the steps of the proof. 
The proof is split in three main steps (presented in separate subsections). 

\smallskip \noindent {\it Step 1 (\S~\ref{sec:spacescale}): choice of the space scale}. 
Given $x,y\in I_0:=[0,1]$, in \S~\ref{sec:spacescale} we first of all choose a renormalisation time $k_0$ (which we call renormalisation \emph{scale}) so that $|x-y|$ is comparable to the lenght $|I_{{k_0}}|$ of the $k_0^{th}$ inducing interval given by the chosen renormalisation algorithm (see \S~\ref{sec:DC}) and estimate the lenght $|x-y|$ using renormalisation (see Lemma~\ref{lemma:denom}). 

\smallskip \noindent {\it Step 2 (\S~\ref{sec:sameorbit}): $x,y$ in the same orbit}.  
In  \S~\ref{sec:sameorbit} we estimate $|\varphi(x)-\varphi(y)|$ in the special case in which $x,y$ both belong to the orbit $\mathcal{O}_T(0):= \{ T^n 0, n\in \mathbb{N}\}$ of $0$ under $T$.  If $x=T^i 0 $ and $y=T^j 0$, we  call \emph{time-distance} the difference  $|i-j|$ and we say that  \emph{order} of this difference is at least $n$ if $|i-j|\geq q_n$ where $q_n:=\max_{1\leq j\leq d} q_n^j$. 
 The estimate which we prove on $|\varphi(x)-\varphi(y)|$ depends on the \emph{order} $n$ of the \emph{time-distance} between $x,y$ and we show that they are exponentially small in $n$. 

\smallskip \noindent {\it Step 3 (\S~\ref{sec:appr}): General $x,y$}.   Finally, to reduce the general case where $x,y \in \I_0$ are arbitrary to the above special case in which $x,y\in \mathcal{O}_T(0)$, in \S~\ref{sec:appr} we  construct two sequences $(x_n)_n, (y_n)_n$ which approximate $x$ and $y$, in the sense that $\lim_{n\to \infty}x_n=x$ and $\lim_{n\to \infty}x_n=x$ and such that, for any $n\in\mathbb{N}$, $x_n,y_n\in \mathcal{O}_T(0)$ and have order at least $n_{k_0}$ (where $k_0$ is the scale of $|x-y|$, see above). Thus, using continuity and the previous step, we obtain the desired estimates for $|\varphi(x)-\varphi(y)|$ in the general case. 

\subsubsection{Space scale choice and space estimates}\label{sec:spacescale}
Let $x,y\in I^{(0)}:=[0,1]$. Let $(n_k)_k$ be the sequence of accelerating times of the doubly positive acceleration and let $(I_{n_k})_{k\in \mathbb{N}}$  be the corresponding sequence of inducing intervals, so that $\mathcal{R}^k(T)$ is obtained renormalizing the induced map of $T$ on $I_{n_k}$. We denote $(\mathcal{P}_k)_k$ the associated sequence of dynamical partitions in towers over $I_{n_k}$.

\smallskip
We define the \emph{scale} of the interval $[x,y]$ as follows and show below that it is well defined.
\begin{definition}\label{def:scale}
The \emph{scale} $k=k(x,y)$ of the interval $[x,y]$ with endpoints $x,y$ is the minimum  $k\in \mathbb{N}$ such that the interval $[x,y]$ contains at least one full floor of the partition $\mathcal{P}_k$, i.e.~there exists an atom $F$ of the partition $\mathcal{P}_k$ such that $F\subset [x,y]$.
\end{definition}
\noindent To see that $k(x,y)$ is well defined, we should show that there exists a such $k$. To see this, recall that since $T$ is linearizable (i.e.~topologically conjugated to a standard IET, namely $T_0$), then $\lim_{k\to\infty}\mesh(\mathcal{P}_k)=0$ (see  Remark~\ref{meshtozero} in \S~\ref{sec:renormalisation}). Therefore, if we choose $k$ sufficiently large so that $\mesh(\mathcal{P}_k)<|x-y|/2$, $[x,y]$ must contain a floor of $\mathcal{P}_k$. This shows that $k(x,y)<+\infty$. We record the following immediate consequence of the definition of $k(x,y)$ as a Lemma, since it will be important later.

\begin{lemma}\label{rk:order}
If $k_0=k(x,y)$ is the scale of  $[x,y]\subset [0,1]$, then $[x,y]$ is contained in the union of  \emph{at most two} floors of the partition $\mathcal{P}_{k_0}$.  When the floors are two, i.e.~$[x,y]\subset F_1\cup F_2$, where  $F_1,F_2$ are floors of $\mathcal{P}_{k(x,y)-1}$, $F_1$ and $F_2$ are \emph{adjiacent} in $[0,1]$, i.e.~they share a common endpoint.
\end{lemma}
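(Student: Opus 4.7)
My plan is to exploit directly the minimality clause built into Definition~\ref{def:scale}. By definition, $k_0 = k(x,y)$ is the \emph{smallest} index for which $[x,y]$ contains a floor of $\mathcal{P}_{k_0}$; equivalently, since $\mathcal{P}_{k_0}$ refines $\mathcal{P}_{k_0-1}$ (as noted in \S~\ref{sec:renormalisation}), the interval $[x,y]$ contains \emph{no} floor of the coarser partition $\mathcal{P}_{k_0-1}$. This single observation will drive the whole argument.

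Granted this, I would enumerate in left-to-right order along $[0,1]$ the floors of $\mathcal{P}_{k_0-1}$ that intersect $[x,y]$. Since $\mathcal{P}_{k_0-1}$ is a partition of $[0,1]$ into finitely many subintervals, these floors form a consecutive block $F_1 < F_2 < \cdots < F_m$, with $x \in F_1$ and $y \in F_m$. A short contradiction argument then gives $m \leq 2$: if $m \geq 3$, any intermediate floor $F_i$ with $1 < i < m$ lies strictly between $F_1$ and $F_m$, so $F_i \subset (x,y) \subset [x,y]$, contradicting the minimality of $k_0$.

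When $m = 2$, the two floors $F_1, F_2$ are by construction consecutive in the partition $\mathcal{P}_{k_0-1}$ of $[0,1]$ and hence share a common endpoint, which is the adjacency claim. I do not anticipate any substantive obstacle here: once the definition of the scale is unpacked, the statement reduces to the elementary fact that an interval inside $[0,1]$ can meet at most two consecutive atoms of a partition into intervals without entirely containing one of them. The only care needed is to remember that the partitions here are defined only up to finitely many endpoints (as noted in \S~\ref{giet}), but this does not affect the combinatorial count of how many floors $[x,y]$ can straddle.
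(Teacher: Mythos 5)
Your argument is correct and is essentially the paper's own proof: both rest on the observation that, by the minimality in the definition of $k_0=k(x,y)$, no full floor of $\mathcal{P}_{k_0-1}$ can be contained in $[x,y]$, followed by elementary combinatorics of intervals of a partition. The paper phrases this by counting endpoints of $\mathcal{P}_{k_0-1}$ in the interior of $[x,y]$ (at most one, whence one floor or two adjacent floors), while you count the floors meeting $[x,y]$ (at most two, since a third, intermediate floor would lie entirely inside $[x,y]$); these are equivalent formulations of the same argument.
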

\begin{proof} Either no endpoints of $\mathcal{P}_k$ belongs to the interior of $[x,y]$, in which case $[x,y]$ is fully contained in \emph{one} of the floors of $\mathcal{P}_{k_0-1}$, or $[x,y]$ contains at least  an endpoint, call it $f$. In this case, though, there cannot be \emph{two} endpoints of $\mathcal{P}_{k_0-1}$ in the interior of $[x,y]$, otherwise $[x,y]$ would contain a floor of  $\mathcal{P}_{k_0-1}$ contradiction the minimality in the definition of $k_0=k(x,y)$. Then, if $f$ is the common endpoint of some floors $F_1$ and $F_2$ of $\mathcal{P}_{k_0-1}$, we conclude that the other endpoints are not in $[x,y]$, so $[x,y]\subset F_1\cup F_2$.
\end{proof}

The scale $k(x,y)$ allows first of all to estimate the size $|x-y|$ of $[x,y]$ through renormalisation as follows. Notice that in the following Lemma we will use the assumption that $T$ and $T_0$ are differentiably conjugate, as well as the Roth-type growth which we assume for $T_0$ (recall Definition~\ref{def:DC}).

\begin{lemma}[Estimate of the interval length through renormalisation]\label{lemma:denom}
There exists $c>0$ and $0<\rho_1<1$ such that every $x,y\in [0,1]$ 
$$
|x-y|\geq c \rho_1^{k(x,y)}.
$$
\end{lemma}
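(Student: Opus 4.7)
The plan is to reduce the claim to a lower bound for the minimal length of a subinterval of the inducing partition for the linearization $T_0$, and then transfer this back to $T$ via the $\mathcal{C}^1$-conjugacy $h$. By definition of scale $k_0 := k(x,y)$, the interval $[x,y]$ contains at least one full floor $F$ of $\mathcal{P}_{k_0}$, hence $|x-y| \geq |F|$. Since $h \circ T = T_0 \circ h$ with $h$ a $\mathcal{C}^1$-diffeomorphism of $[0,1]$, the derivative $Dh$ is continuous and strictly positive and hence bounded between two positive constants; moreover $h$ intertwines $T$ and $T_0$ and therefore sends $\mathcal{P}_{k_0}$ to the analogous dynamical partition $\mathcal{P}_{k_0}^{T_0}$ for $T_0$ (whose incidence matrices and combinatorial data coincide with those of $T$, being determined by the topological conjugacy class). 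It follows that $|F| \geq c_h\, |h(F)|$ for some $c_h > 0$.

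Next, since $T_0$ is a standard IET, every floor of the $j$-th Rohlin tower of $\mathcal{P}_{k_0}^{T_0}$ has the \emph{same} length as its base, namely $\lambda^{j}_{k_0}$, the length of the $j$-th subinterval of the $n_{k_0}$-th inducing interval for $T_0$. Thus $|h(F)| \geq \min_j \lambda^{j}_{k_0}$, and the task is reduced to showing that there exist constants $c_0 > 0$ and $0 < \rho_1 < 1$ such that $\min_j \lambda^{j}_{k_0} \geq c_0\, \rho_1^{k_0}$.

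For this final step I would invoke the length estimate \eqref{lenghts_lowbound} with $m = 0$, noting that the positivity assumption (P) in the TPG condition (Definition~\ref{def:DC}) guarantees its applicability at every step; this yields
\[
\min_j \lambda^{j}_{k_0} \;\geq\; \frac{1}{d\,\Vert A_{k_0-1}\Vert\,\Vert A(0,k_0-1)\Vert}.
\]
Combining the sub-exponential growth (S) of the individual matrices with the exponential bound (E) on their products then gives, for any $\epsilon>0$, an estimate of the form $\min_j \lambda^{j}_{k_0} \geq c_0\,(\rho\, e^{\epsilon})^{-k_0}$. Since $\rho > 1$, we can fix $\epsilon>0$ once and set $\rho_1 := (\rho\, e^{\epsilon})^{-1} \in (0,1)$ and $c := c_h c_0$, which proves the lemma. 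The argument is mostly bookkeeping; the one conceptual point, and the reason the $\mathcal{C}^1$-conjugacy hypothesis is invoked here, is that \eqref{lenghts_lowbound} relies on the linear transformation of the length vector under the cocycle, a feature of the standard IET $T_0$ that has no direct nonlinear counterpart for $T$ itself.
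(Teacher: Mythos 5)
Your proposal is correct and follows essentially the same route as the paper's proof: bound $|x-y|$ below by the length of a floor of $\mathcal{P}_{k(x,y)}$ contained in $[x,y]$, transfer to the linear model $T_0$ via the bounded derivative of the $\mathcal{C}^1$-conjugacy, use that for a standard IET each floor has the length of its tower base, and then apply \eqref{lenghts_lowbound} together with properties (P), (S) and (E) of Definition~\ref{def:DC} to get the exponential lower bound. No substantive difference from the paper's argument.
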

\begin{proof}
By definition of $k(x,y)$, $[x,y]$ contain some $F$ floor of $\mathcal{P}_{k(x,y)}$. Therefore, 
$[x,y]\geq |F|$. Let $F_0 = h(F)$ be the image of the floor $F$ under the conjugacy between $T$ and $T_0$. Then $F_0$ is a floor of the dynamical partition for the standard IET $T_0$. Since $h$ is a  $\mathcal{C}^1$-diffeomorphism, the ratio $|F|/|F_0|$ (by $\Vert| Dh \Vert|_{\infty}$ and its inverse). 
It is therefore sufficient to estimate $|F_0|$ and to show that $F_0 \geq c_0 \rho_1^{k(x,y)}$ for some $c_0>0$ and $\rho_1<1$ to conclude. Let $(I_0)_k^j$ denote the base floors of the Rohlin towers for $T_0$ after $k$ steps of renormalisation and $(I_0)_k=\cup_{j=1}^d (I_0)_k^j$ their union, which is the $k^{th}$ inducing interval for $T_0$. Then, since $T_0$ is a standard IET, if $F_0$ belongs to the tower over $(I_0)_k^{j_0}$, $|F_0|=|(I_0)_k^j|$. Thus, since the matrices $(A_n)_n$ are positive, 
applying \eqref{lenghts_lowbound} (with $n=k$ and $m=0$, so $|(I_0)_0|=1$), we get 
$$
|F_0|= |(I_0)_k^{j_0}|\geq \min_{1\leq j\leq d}|(I_0)_k^{j}|\geq  
 \frac{|1}{d \Vert A(0,k) \Vert}.
$$
Thus, exploting Property $(E)$ as well as property $(S)$ of the PTG (see Definition~\ref{def:DC}), we get that for every $\epsilon>0$ there exist $C_\epsilon$ such that
$$
|F_0|\geq  \frac{|(I_0)_0|}{d \Vert A_1^t \cdots A_k^t \Vert \Vert A_k \Vert}\geq \frac{1}{d\, C_\epsilon\, e^{\epsilon k } \Vert (A_k \cdot A_1)^t\Vert} \geq \frac{1}{d\, K C_\epsilon\, \rho^k e^{\epsilon k }}.
$$
The above estimate gives the desired exponential decay estimate for any choice of $\rho_1 $ such that $0<\rho_1< 1/\rho$.
\end{proof}

\subsubsection{Estimates on the variation when $x,y$ belong to the same orbit.}\label{sec:sameorbit}
In this subsection we  estimate the variation $|\varphi(x)-\varphi(y)|$ of the function $\varphi=\log Dh$ in a special   case, assuming that $x,y$ both belong to the orbit $\mathcal{O}_T(0)$ of $0$. This special case will then be used in the next \S~\ref{sec:appr} to estimate (by approximation) the general case of generic $x,y$.  
\smallskip

Let $z_0:=0$ and let us denote by $z_\ell:= T_\ell(0)$ the points in the orbit $\mathcal{O}_T(z_0):=\{ T_\ell z_0, l\in \mathbb{N}\}$ of $z_0=0$ under $T$. We then assume that $x:=z_p$ and $y:=z_q$ for some $p,q\in\mathbb{N}$.
We will also assume without loss of generality that $q>p$. Let us define
\begin{equation}\label{def:N}
N_k(p,q):= Card \{ \ell : \ p\leq \ell <q \ \text{and}\  z_\ell \in I_k\}
\end{equation}
to be the number of intersections of the orbit segment  $\{ z_\ell, p\leq \ell <p\}$ with $I_k$.
\begin{proposition}[Variation of $\varphi$ for points in the orbit of zero]\label{prop:specialvariation}
Assume that $T$ satisfies the Diophantine-like condition and the exponential convergence of renormalisation $d_{\mathcal{C}^2}(\mathcal{R}^k T, \mathcal{I}_d)\leq C\rho^k$ holds. Then, for any two points $z_p,z_q\in \mathcal{O}_T(0)$, if $z_p, z_q$ belong to the same floor of the partition $\mathcal{P}_{k}$, we have
$$
|\varphi(z_p)-\varphi(z_q)|\leq C  \, N_k (p,q)\,
 \rho^{k}, 
$$ 
where $N_k(p,q)$ is as in \eqref{def:N}. 
\end{proposition}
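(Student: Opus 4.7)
\smallskip
\noindent\textbf{Proof plan.}
The starting point is the cohomological equation \eqref{eq:cohm}, which gives, for $f:=-\log \mathrm{D}T$ and any $p<q$,
\[
\varphi(z_q)-\varphi(z_p)=\sum_{\ell=p}^{q-1}\bigl(\varphi(Tz_\ell)-\varphi(z_\ell)\bigr)=S_{q-p}f(z_p).
\]
Thus the task is to estimate the Birkhoff sum $S_{q-p}f(z_p)$, and the plan is to produce an almost-complete decomposition of it into special Birkhoff sums $f_k$ at level $k$, using crucially the hypothesis that $z_p$ and $z_q$ lie in a common floor of $\mathcal{P}_k$.

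\smallskip
Write $z_p$ as a point at some height $m$ of the tower over an interval $I_k^{j}\in\mathcal{P}_k$; by hypothesis $z_q$ lies at the very same height $m$ of the very same tower over $I_k^{j}$. Let $\ell_1<\ell_2<\cdots<\ell_N$ be the times in $\{p,\dots,q-1\}$ at which the orbit returns to $I_k$, so that $N=N_k(p,q)$, and set $w_i:=z_{\ell_i}\in I_k$. By construction $\ell_1=p+(q_k^{j}-m)$ and $\ell_N=q-m$, and between two consecutive $\ell_i$'s the orbit executes exactly one full traversal of the tower over $w_i\in I_k^{j_i}$. Splitting the Birkhoff sum accordingly yields
\[
S_{q-p}f(z_p)=S_{q_k^{j}-m}f(z_p)+\sum_{i=1}^{N-1}f_k(w_i)+S_m f(w_N),
\]
where the first and last terms are \emph{broken} (partial) Birkhoff sums in the sense of Definition~\ref{def:brokenBS}. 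The middle sum has $N-1\le N_k(p,q)$ terms, and Lemma~\ref{SBSviaR} (which applies because the hypothesis of $\mathcal{C}^2$ exponential convergence implies $\mathcal{C}^1$ exponential convergence) gives $\|f_k\|_\infty\le K\rho^k$, so this contribution is already bounded by $K\,N_k(p,q)\,\rho^k$.

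\smallskip
The decisive step is to control the two boundary broken sums. Here the alignment between $z_p$ and $z_q$ (same floor, same height $m$) is crucial: setting $w_0:=T^{-m}(z_p)\in I_k^{j}$, the cocycle identity gives
\[
S_{q_k^{j}-m}f(z_p)=f_k(w_0)-S_m f(w_0),
\]
so that the sum of the two broken terms reduces to $f_k(w_0)+\bigl(S_m f(w_N)-S_m f(w_0)\bigr)$. The first summand is again bounded by $\|f_k\|_\infty\le K\rho^k$. The remaining difference measures the distortion of $T^m$ between the two nearby base points $w_0,w_N\in I_k^{j}$: by definition of $f$, $S_m f = -\log \mathrm{D}T^m$, and by the distribution property of the non-linearity \eqref{NLdistribution},
\[
\bigl|S_m f(w_N)-S_m f(w_0)\bigr|
\;\le\;\int_{I_k^{j}}|\eta_{T^m}|
\;=\;\sum_{i=0}^{m-1}\int_{T^i(I_k^{j})}|\eta_T|,
\]
which is precisely the kind of quantity estimated by the broken Birkhoff sum bound of Proposition~\ref{prop:reminderestimate} (whose input is exactly the $\mathcal{C}^2$-exponential convergence assumption, used via Lemma~\ref{nLdecay}). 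This gives a bound of order $\rho^k$ on the boundary contribution.

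\smallskip
Collecting the three pieces yields the desired estimate $|\varphi(z_p)-\varphi(z_q)|\le C\,N_k(p,q)\,\rho^k$ (after possibly replacing $\rho$ by a slightly larger constant to absorb subexponential factors from the TPG condition). The main technical obstacle is the distortion estimate for the broken boundary sums: a naive bound using $\|f_k\|_\infty$ is not available because these sums are \emph{incomplete} traversals of the Rohlin towers, and only the non-linear control afforded by Proposition~\ref{prop:reminderestimate} — which is where the $\mathcal{C}^2$ (rather than $\mathcal{C}^1$) convergence hypothesis enters in an essential way — is strong enough to deliver the required exponential decay.
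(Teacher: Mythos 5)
Your argument is correct and follows essentially the same route as the paper: the cohomological-equation reduction to the Birkhoff sum $S_{q-p}f(z_p)$, its decomposition into $N_k(p,q)$ special Birkhoff sums at level $k$ plus two boundary pieces forming a broken Birkhoff sum along the tower (the paper's Lemma~\ref{lemma:basicestimate}), and the control of that broken piece through comparison with a special Birkhoff sum at a base point via the nonlinearity/$\mathcal{C}^2$-convergence estimates of Proposition~\ref{prop:reminderestimate}, together with Lemma~\ref{SBSviaR} for $\Vert f_k\Vert_\infty$. Your cocycle-identity rewriting $S_{q_k^{j}-m}f(z_p)=f_k(w_0)-S_mf(w_0)$ merely makes explicit the comparison that the paper carries out inside the proof of Proposition~\ref{prop:reminderestimate}, so the two proofs coincide in substance.
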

We prove this Proposition at the end of this subsection. 
As we explained in the outline in \S~\ref{sec:strategy}, the reason why we consider points in the same orbit first is that, in this case, setting $f:=\log DT$, in view of the relation $\varphi\circ T- \varphi = f$, we can reduce the study of the variation $|\varphi(z_p)-\varphi(z_q)|$ to the study of the Birkhoff sum $S_{q-p} \, f (z_p)$. We will decompose this Birkhoff sum into a number of special Birkhoff sums, plus two \emph{reminders}. The reminders have the following special form, that we call \emph{broken Birkhoff sum along a tower} (see Figure~\ref{fig:brokenBS} and the explanation after Definition~\ref{def:brokenBS}).

\begin{definition}[Broken Birkhoff sum along a tower]\label{def:brokenBS}
Let $\mathcal{P}_k^j$ be the Rohlin tower with base $I_k^j$ and height $q_k^j$. A broken Birkhoff sum along the tower $\mathcal{P}_k^j$ is sum  $B_k(x,y,m)$ of the form
$$
B_k(x,y,m):= S_m f(x) + S_{q_k^j -m + 1} f(T^m y), \qquad \text{where}\ \  x,y \in I^j_k , \ \ 0\leq m< q_k^j .
$$
\end{definition}
\noindent Thus, $\mathcal{P}_k^j$ and the sum is over two orbit segments, namely 
$$ \{x, T x, \dots, T^{m-1}x\}, \qquad \text{and} \qquad \{T^m y, T^{m+1} y, \dots, T^{q_k^j-1}y \}; 
$$
the first is the orbit of  a point $x$ in the base of the tower, 
 up the level $m-1$, while the second  the orbit 
of a point 
$T^m y$ belonging to the successive  level $m$ of the tower, up to the top of the tower. This motives the choice of the name \emph{broken Birkhoff sum along a tower}\footnote{In comparison to  \emph{Birkhoff sums along the tower}, which are sums over the orbit segment starting from a point in the base, up to the top of the tower, these sums also sums \emph{along a tower}, i.e.~sums over points which each belong to one and only floor of the tower, but are \emph{broken} in two parts, each of which is a sum over an orbit, see Figure~\ref{fig:brokenBS}.}. 
Equivalently, using the notation $S_{-m}\, f$ for negative Birkhoff sums introduced in \S~\ref{sec:renormalisation}, if $u:= T^{m} x$ and  $v:= T^m y$, we can write the sum $B_k(x,y,m)$ (recalling \eqref{backwardsBS}) 
also as
$$ 
S_{-m} f(u) + S_{q_k^j -m+1} f(v), \qquad \text{where}\ \   0\leq m< q_k^j \ \ \text{and}\ \ u,v\in T^m (I_k^j).
$$


\begin{figure}[!h]
	\begin{center}
		\def\svgwidth{0.6 \columnwidth}
\begingroup%
  \makeatletter%
  \providecommand\color[2][]{%
    \errmessage{(Inkscape) Color is used for the text in Inkscape, but the package 'color.sty' is not loaded}%
    \renewcommand\color[2][]{}%
  }%
  \providecommand\transparent[1]{%
    \errmessage{(Inkscape) Transparency is used (non-zero) for the text in Inkscape, but the package 'transparent.sty' is not loaded}%
    \renewcommand\transparent[1]{}%
  }%
  \providecommand\rotatebox[2]{#2}%
  \newcommand*\fsize{\dimexpr\f@size pt\relax}%
  \newcommand*\lineheight[1]{\fontsize{\fsize}{#1\fsize}\selectfont}%
  \ifx\svgwidth\undefined%
    \setlength{\unitlength}{445.7602521bp}%
    \ifx\svgscale\undefined%
      \relax%
    \else%
      \setlength{\unitlength}{\unitlength * \real{\svgscale}}%
    \fi%
  \else%
    \setlength{\unitlength}{\svgwidth}%
  \fi%
  \global\let\svgwidth\undefined%
  \global\let\svgscale\undefined%
  \makeatother%
  \begin{picture}(1,1.16864829)%
    \lineheight{1}%
    \setlength\tabcolsep{0pt}%
    \put(0,0){\includegraphics[width=\unitlength,page=1]{brokenBS.pdf}}%
    \put(0.39726291,0.01991807){\color[rgb]{0,0,0}\makebox(0,0)[lt]{\lineheight{1.25}\smash{\begin{tabular}[t]{l}$x$\end{tabular}}}}%
    \put(0.63166831,0.01177983){\color[rgb]{0,0,0}\makebox(0,0)[lt]{\lineheight{1.25}\smash{\begin{tabular}[t]{l}$y$\end{tabular}}}}%
    \put(0,0){\includegraphics[width=\unitlength,page=2]{brokenBS.pdf}}%
    \put(-0.00183994,0.83249097){\color[rgb]{0,0,0}\makebox(0,0)[lt]{\lineheight{1.25}\smash{\begin{tabular}[t]{l}$q^j_k - 1 - m$\end{tabular}}}}%
    \put(0.90053735,0.32391293){\color[rgb]{0,0,0}\makebox(0,0)[lt]{\lineheight{1.25}\smash{\begin{tabular}[t]{l}$m$\end{tabular}}}}%
  \end{picture}%
\endgroup%

	\end{center}
	\caption{A broken Birkhoff sum}
			\label{fig:brokenBS}
\end{figure}

\noindent We then define the supremum $R^j_k$ over all broken Birkhoff sums contained in the tower $\mathcal{P}_k^j$ (i.e.~over all choices 
 of level $m$  of a floor $F$ of $\mathcal{P}^j_k$ and of points $u,v \in F$): 
\begin{equation}\label{def:R}
R_k^j:= 
\sup_{0\leq m<q_k^j}\ \  \sup_{u,v \in T^m (I_k^j)} \left( S_{-m+1} f (u) + S_{q_k^{i}+1-m} f (v) \right).
\end{equation}

The following Lemma~\ref{lemma:basicestimate} provides the basic estimate that we will use 
 to estimate  Birkhoff sums of $f=\log DT$ 
in terms of special Birkhoff sums and broken Birkhoff sums of level $k$. 
\begin{lemma}[Birkhoff sums decomposition at level $k$]\label{lemma:basicestimate}
If $z_p, z_q \in \mathcal{O}_T(0)$ with
 $ p<q$. If $k$ is such that both $x_p$ and $x_q$ belong to the same floor $F$ of some tower $\mathcal{P}_{k}^j$ of  $\mathcal{P}_{k}$ and Let $N_k(p,q)$ is, as in \eqref{def:N},  
 the number of intersections between the orbit segment $\{ z_p, z_{p+1}, \dots, z_{q-1}\}$ and $I_k$, we have
\begin{equation}\label{specialestimate}
| S_{q-p} \, f (z_p)  | \leq N_k(p,q)\, \Vert f_k \Vert_{\infty} + R_k^j (f), 
\end{equation}
where 
$R^j_k(f)$ is the supremum over broken Birkhoff sums contained in $\mathcal{P}^j_k$, see \eqref{def:R}.
\end{lemma}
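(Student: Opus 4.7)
The plan is to decompose the orbit segment $\{z_p, z_{p+1}, \ldots, z_{q-1}\}$ according to its visits to the inducing interval $I_k$, and to recognize each complete inter-visit piece as a special Birkhoff sum of level $k$, while the two extremal pieces will combine into a single broken Birkhoff sum along the tower containing the common floor $F$.

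First, let $F = T^m(I_k^j)$ be the common floor with $0 \leq m < q_k^j$, so that $z_p = T^m u_1$ and $z_q = T^m u_2$ for uniquely determined $u_1, u_2 \in I_k^j$. I would then list the visit times $p_0 < p_1 < \cdots < p_{N-1}$ of the orbit segment to $I_k$, where $N = N_k(p,q)$. Using the Rohlin tower structure, the first visit time satisfies $p_0 = p + (q_k^j - m)$ when $m > 0$ (and $p_0 = p$ when $m = 0$), and each successive $p_{i+1} - p_i$ equals the height of the tower based at the interval of $I_k$ containing $z_{p_i}$. The key structural observation, which uses the hypothesis that $z_p$ and $z_q$ lie in the \emph{same} floor $F$ of tower $j$, is that the last visit $p_{N-1}$ must occur in the base interval $I_k^j$ of that very tower, with $q - p_{N-1} = m$; this follows because $T^m$ injectively maps each $I_k^{j'}$ to a distinct floor at level $m$, so the only way the orbit can return to $F$ after $m$ more steps is if $z_{p_{N-1}} = u_2 \in I_k^j$.

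Given this decomposition of indices, the Birkhoff sum splits as
\begin{equation*}
S_{q-p}f(z_p) \;=\; \sum_{\ell = p}^{p_0 - 1} f(z_\ell) \;+\; \sum_{i=0}^{N-2} \sum_{\ell = p_i}^{p_{i+1} - 1} f(z_\ell) \;+\; \sum_{\ell = p_{N-1}}^{q-1} f(z_\ell).
\end{equation*}
Each inner sum in the middle term is, by definition of special Birkhoff sums \eqref{eq:sBS}, exactly $f_k(z_{p_i})$, so the middle contribution is bounded in absolute value by $(N-1)\|f_k\|_\infty \leq N_k(p,q)\|f_k\|_\infty$. The initial piece equals $S_{q_k^j - m} f(z_p)$, a sum over the levels $m, m+1, \ldots, q_k^j - 1$ of the tower $\mathcal{P}_k^j$ starting from $z_p = T^m u_1$, while the final piece equals $S_m f(z_{p_{N-1}})$, a sum over the levels $0, 1, \ldots, m-1$ of the \emph{same} tower starting from $z_{p_{N-1}} \in I_k^j$. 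Consequently, setting $x = z_{p_{N-1}}$ and $y = u_1$ (so that $T^m y = z_p$), the sum of the initial and final pieces is precisely a broken Birkhoff sum along $\mathcal{P}_k^j$ in the sense of Definition \ref{def:brokenBS}, and thus bounded in absolute value by $R_k^j(f)$. Combining the two bounds yields the desired inequality.

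The main obstacle, and the step that requires the most care, is the book-keeping of indices needed to justify that the extremal fragments really do combine into a single broken sum along one tower — that is, that $z_{p_{N-1}}$ lies in the \emph{same} base interval $I_k^j$ as $u_1$. This is where the hypothesis that $z_p$ and $z_q$ are in a common floor is used essentially. The edge case $m=0$ (when $z_p, z_q \in I_k^j$ directly) should also be checked separately: there the initial piece is empty and the final piece is itself a full special Birkhoff sum, so one obtains $N_k(p,q)\|f_k\|_\infty$ with no broken-sum remainder, which is covered by the stated bound since we can take $R_k^j \geq 0$.
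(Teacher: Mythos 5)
Your proposal is correct and follows essentially the same route as the paper: decompose $S_{q-p}f(z_p)$ at the visit times of the orbit to $I_k$, bound the complete inter-visit pieces by $N_k(p,q)\Vert f_k\Vert_\infty$ as special Birkhoff sums, and observe that the initial and final fragments combine (using that $z_p,z_q$ lie in the same floor, so the last visit is the projection of $z_q$ to the base $I_k^j$) into a single broken Birkhoff sum bounded by $R_k^j(f)$. The only discrepancy is an off-by-one in the length of the upper fragment ($q_k^j-m$ versus the paper's $q_k^j-m+1$), which traces back to the paper's own indexing in Definition~\ref{def:brokenBS} and \eqref{def:R} rather than to any gap in your argument.
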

\begin{proof}
To keep the notation simple, let us write $N:=N_k(p,q)$. Let $\ell_1<\ell_2<\dots<\ell_N$ the be labels of the $N$ intersections of the orbit segment $\{ z_p, z_{p+1}, \dots, z_{q-1}\}$ and $I_k$. 
 Thus, $z_{\ell_1}$ is the first intersection of the forward orbit $\mathcal{O}_T(z_p)$ with $I_k$,  for any $1\leq n\leq N-1$, $z_{\ell_{n+1}}=T_k (z_{\ell_n})$ are the successive returns and $z_{\ell_N}$ is the last intersection before $z_q$. Thus, we can  decompose the Birkhoff sum $S_{q-p} \, f (z_p)$ as
\begin{equation}\label{1stepdecomp}
 S_{q-p} \, f (z_p) =  S_{\ell_1-p} f (z_{p}) + \sum_{n=1}^{N-1} S_{\ell_{n+1}-\ell_n}f (z_{\ell_n}) +S_{q-\ell_{N}} f (z_{\ell_N}).
\end{equation}
Notice now that for any $1\leq n<N$, $\ell_{n+1}-\ell_n$ is exactly the height $q_k^{j(n)}$ of the tower $\mathcal{P}_k^{j(n)} $ which contains $z_{\ell_n}$ in its base.  Thus, recalling that, by definition the special Birkhoff sum (see \S~\ref{sec:renormalisation}), $f_k(z_\ell)=S_{q_{k}^{j(n)}} f (z_{\ell_n})$ when $z_{\ell_n}\in I_k^{j(n)}$, we get
\begin{equation}\label{SBSestimate}
|\sum_{n=1}^{N-1} S_{\ell_{n+1}-\ell_n}f (z_{\ell_n})|= |\sum_{n=1}^{N-1} f_k (z_{\ell_n})|\leq N \Vert f_k\Vert_{\infty}.
\end{equation}
We claim that the first and the last term in \eqref{1stepdecomp} together form a broken Birkohff sum along the tower $\mathcal{P}_k^j$ and therefore can be estimated by the supremum $R_k(f) $ defined in \eqref{def:R}. Let $F= T^mI_k^j$ for some $0\leq m< q^j_k$ be the floor of $\mathcal{P}^j_k$ which, by assumption, contains both $x_p$ and $x_q$. 
Then it suffices to remark that, since  $z_p \in  T^{m}I_k^{i} $, 
we have that $\ell_1-p = q^{i}_k+1-m$ (which is the number of floors of the tower  $\mathcal{P}_k^{i}$ \emph{above} the floor which contains $z_p$) and, respectively, since we also have that $ z_q \in  T^{m}I_k^{i} $, 
then the previous (and last) visit $z_{\ell_n}$ to $I_k$ was the projection of $z_q$ to the base $I_k^{i}$, i.e.~
$\ell_N= q- m$. Thus, $S_{q-\ell_{N}} f (z_{\ell_N})=S_{m-1} f (z_{\ell_N})$ and furthermore, since $z_q=T^{m-1}(z_{\ell_n})$,  we can  write
$S_{m-1} f (z_{\ell_N})= S_{-m} f(z_q)$ (by \eqref{backwardsBS} in \S~\ref{sec:renormalisation}).
 Thus
$$| S_{\ell_1-p} f (z_{p}) +S_{q-\ell_{N}} f (z_{\ell_N})|  = 
| S_{q_k^{i}+1-m} f (z_p)  + S_{-m} f(z_q) |\leq  R^j_k(f), 
$$
where the last estimate (since $z_p, z_q\in T^m(I_k)^j$ and hence the two sums form a broken Birkhoff sum along the tower $\mathcal{P}_k^j$) follows from the definition \eqref{def:R} of $R^j_k(f)$. 
Combining this estimate with \eqref{SBSestimate}, we now see that \eqref{1stepdecomp} can be estimated by triangle inequality as claimed.
\end{proof} 

The norms $\Vert f_k \Vert_{\infty} $ in \eqref{specialestimate} decay exponentially when there is exponential convergence of renormalisation in view of Lemma~\ref{SBSviaR}. Let us show now that also the term $R^j_k(f)$ that controls broken Birkoff of level $k $ can be shown to decay exponentially in $k$, by exploiting exponential convergence of renormalisation (in particular convergence in $d_{\mathcal{C}^2}$ plays a crucial role  here). 


\begin{proposition}[Exponential estimate of broken special Birkhoff sums]\label{prop:reminderestimate}
Under the exponential convergence of renormalisation assumption that $d_{\mathcal{C}^2 }(\mathcal{R}^k T, \mathcal{I}_d)\leq C \rho^k$,  for every $1\leq j\leq d$, we have that there exists $\rho'$
$$
\sup_{z\in I_k} |R_k^j(f)-f_k (z)| \leq C \rho'^k, \qquad \text{for}\ \text{every} \ k\in \mathbb{N}.
$$
Thus, in particular $|R_k^j(f)|\leq \Vert f_k\Vert_\infty + C \rho'^k$ for every  $k\in \mathbb{N}$. 
\end{proposition}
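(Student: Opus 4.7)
The key step is to decompose each broken Birkhoff sum into the corresponding special Birkhoff sum plus a pure distortion error, and then bound each piece using the $\mathcal{C}^2$-convergence of renormalization via Lemma~\ref{nLdecay}. Writing out $B_k(x,y,m) = S_m f(x) + S_{q_k^j - m + 1} f(T^m y)$ with $f = \log DT$ and comparing with $f_k(y) = S_{q_k^j} f(y)$, I would first verify that, modulo a single boundary term which is exponentially small by Lemma~\ref{SBSviaR}, the difference $B_k(x,y,m) - f_k(y)$ reduces to the partial-tower log-distortion $\log DT^m(x)/DT^m(y)$. Taking an auxiliary point $z \in I_k^j$, the triangle inequality then splits the proof into bounding (i) the variation of $f_k$ on $I_k^j$, and (ii) the partial-tower distortion $|\log DT^m(x)/DT^m(y)|$, uniformly in $0 \le m < q_k^j$ and $x,y \in I_k^j$.

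Part (i) is immediate: the affine conjugacy between $T_k|_{I_k^j}$ and the $j$-th branch of $\mathcal{R}^k T$ rescales the non-linearity by the factor $1/|I_k|$, so Lemma~\ref{nLdecay} yields $\|\eta_{T_k}|_{I_k^j}\|_\infty \le C\rho^k/|I_k|$. Combined with $|y-z| \le |I_k^j|$ and the ratio $|I_k^j|/|I_k|$ being bounded by the TPG condition (Definition~\ref{def:DC}), this yields $|f_k(y) - f_k(z)| = \bigl|\int_z^y \eta_{T_k}(u)\,du\bigr| \le C\rho^k$. For part (ii), I would apply the distribution identity \eqref{NLdistribution} to the factorization $T_k = T^{q_k^j - m} \circ T^m$ on $I_k^j$, which gives
\begin{equation*}
\log\frac{DT^m(x)}{DT^m(y)} \;=\; \int_y^x \eta_{T_k}(u)\,du \;-\; \int_{T^m y}^{T^m x} \eta_{T^{q_k^j - m}}(v)\,dv.
\end{equation*}
The first integral is $O(\rho^k)$ by the reasoning in (i); the second is the complementary partial-tower distortion.

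The main obstacle is precisely controlling this residual integral, since $T^{q_k^j - m}$ restricted to the floor $T^m(I_k^j)$ is \emph{not} itself a renormalization branch and its non-linearity is not handed to us directly by Lemma~\ref{nLdecay}. My plan to close the estimate is to combine a uniform Koebe-type bounded-distortion principle for GIET renormalization (available thanks to the $\mathcal{C}^2$-closeness of $\mathcal{R}^n T$ to $\mathcal{I}_d$ at \emph{every} scale $n \le k$, not just at scale $k$) with the cutting-and-stacking structure of the towers. Concretely, I would expand the second integral as the sum $\sum_{i=m}^{q_k^j - 1}\int_{T^i y}^{T^i x}\eta_T(v)\,dv$ over the remaining floors, use the disjointness of the floors $\{T^i(I_k^j)\}_i$ in $[0,1]$, and pull each term back to the base $I_k^j$ through bounded distortion, so that the resulting sum becomes comparable, up to a bounded multiplicative factor, to $\int_{I_k^j}\eta_{T_k}(u)\,du$, which is $O(\rho^k)$. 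The positivity of the incidence matrices under the TPG condition is crucial here, as it ensures that this pull-back absorbs only a bounded multiplicative constant rather than an unbounded geometric factor, giving the desired exponential bound with some $\rho' < 1$ depending only on $\rho$.
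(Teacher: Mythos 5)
Your reduction is set up correctly (the identity splitting the broken sum into the variation of $f_k$ on the base plus the partial-tower distortion, and part (i), are fine), but the crucial step (ii) has a genuine gap. By the change of variables underlying \eqref{NLdistribution}, the \emph{full} sum $\sum_{i=0}^{q_k^j-1}\int_{T^iy}^{T^ix}\eta_T(v)\,dv$ equals $\int_y^x\eta_{T_k}(u)\,du$ exactly; but your residual term is the \emph{partial} sum over $i\geq m$, and smallness of the signed full-tower non-linearity $\eta_{T_k}$ (which is all that Lemma~\ref{nLdecay} gives, after rescaling) says nothing about a partial sum, because the individual terms can cancel: the non-linearity of $T$ could be $O(1)$ on the lower half of the tower and compensated on the upper half. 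If instead you bound termwise in absolute value and use disjointness of the floors, you only get $\int_{\bigcup_{i\geq m}T^i[y,x]}|\eta_T|\leq |N|(T)$, a fixed constant independent of $k$, with no decay. The "Koebe-type bounded distortion pull-back to the base" does not repair this: pulling $\int_{T^i[y,x]}|\eta_T|$ back to $I_k^j$ is an exact change of variables and produces $\int_{[y,x]}\sum_i|\eta_T(T^iu)|DT^i(u)\,du$, which dominates but is in no way comparable to $\bigl|\int_{[y,x]}\eta_{T_k}\bigr|$; and positivity of the incidence matrices plays no role of the kind you invoke (it never yields a bounded multiplicative comparison between absolute and signed tower non-linearities). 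Incidentally, the "single boundary term" cannot be dismissed via Lemma~\ref{SBSviaR} either: that lemma bounds special Birkhoff sums $f_k$, not individual values of $f=\log DT$, which are only $O(1)$.

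The missing idea, which is how the paper proceeds, is a \emph{multi-scale} decomposition of the partial tower rather than a comparison at the single scale $k$. Each orbit-segment difference $\sum_{\ell}(f(T^\ell y)-f(T^\ell z))$ with $0\leq m<q_k^j$ is decomposed, via the geometric (cutting-and-stacking) decomposition of Birkhoff sums recalled in \S~\ref{sec:renormalisation}, into at most $\|A_n\|$ \emph{complete} special Birkhoff sum differences $f_n(y_n^i)-f_n(z_n^i)$ at every level $n=0,\dots,k$. Each such difference is $\int_{y_n^i}^{z_n^i}\eta_{T_n}$ over an interval which is an image of $[y,z]\subset I_k$, hence of length at most $|I_k|$, sitting inside $I_n$, and is therefore bounded by $\frac{|I_k|}{|I_n|}\,\|\eta_{\mathcal{R}^nT}\|_\infty$. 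One then trades two decays against each other: for $n$ comparable to $k$ the factor $\|\eta_{\mathcal{R}^nT}\|_\infty\leq C\rho_2^n$ is exponentially small (Lemma~\ref{nLdecay}), while for small $n$ positivity of the matrices gives $|I_n|\geq d^{\,k-n}|I_k|$, so the ratio $|I_k|/|I_n|$ is exponentially small in $k-n$. Summing the resulting terms $\lambda_1^{k-n}\lambda_2^{n}$ over $n\leq k$, with the subexponential bound (S) on $\|A_n\|$ absorbing the multiplicities, yields a bound of order $k\lambda^{k/2}$, i.e.\ the claimed exponential estimate. Without this splitting into complete towers of all lower levels, the partial-tower distortion cannot be controlled from the hypotheses.
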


\begin{proof}
Choose 
any $0\leq m <q^j_k$ and consider any two points $u,v\in T^m I_k^j$. 
Let $x,y$ be the projections to the base of the tower of $u,v$ respectively, namely $x=T^{-m}u, y:=T^{-m}v$ (see Figure~\ref{fig:brokenBS}, left).  Choose $z\in I_k^j$ which belong to the interval with endpoints $x$ and $y$. 
Assume without loss of generality that $x<z<y$. We want to compare the broken Birkhoff sum 
$B_k (u,v,m):= S_{-m} f(u) + S_{q_k^j -m+1} f(v)$ 
with the special Birkhoff sum $f_k(z)$ (looking at Figure~\ref{fig:brokenBS} may help the reader to follow). Then, using the definition of broken Birkhoff sums and special Birkhoff sums and rearranging the terms,
\begin{align}
B_k (u,v,m) - f_k(z)  = \nonumber 
& \left(\sum_{\ell=0}^{m-1}f( T^\ell y) +\sum_{\ell=0}^{q_k^{j}+1-m} T^\ell f(T^m x)\right) -  \sum_{\ell=0}^{q_k^j-1} f(T^\ell z) \\
 =& \nonumber
\left(\sum_{\ell=0}^{m-1}f( T^\ell y) + \sum_{\ell=0}^{q_k^{j}+1-m} T^\ell f(T^m x) \right)- \left (\sum_{\ell=0}^{m-1} f(T^\ell z) - \sum_{\ell=0}^{q_k^{j}+1-m}  f(T^\ell z) \right) \\
 =&
 \sum_{\ell=0}^{m-1}\left( f (T^\ell y)- f( T^\ell z ) \right) +  \sum_{\ell=0}^{q_k^j-1} \left( f( T^\ell z)-f (T^\ell x) \right).\label{differences}
\end{align}
We now want to exploit that,  for any interval $[a,b]$, since $f=\log DT$ and $\eta_T=D\log DT$,  we have that $f(b)-f(a) = \int_{[a,b]} \eta_T(\chi) \mathrm{d}\chi$. Remark  that the intervals involved in the sum, namely
$$\{ T^\ell\, [y,z], \ 0\leq \ell <m\}, \quad \text{and} \quad \{  T^\ell\, [y,z], \ 0\leq \ell <m\}$$ 
 are pairwise disjoint (since they belong to distinct floors of the Rohlin tower $\mathcal{P}_k^j$). We will now  group  these integrals into blocks which correspond to Rohlin towers that can be controlled by $|N|(\mathcal{R}^n T)$. 

Let us first estimate the first sum in \eqref{differences}. The second sum in \eqref{differences} can be treated in an analogous way. Recalling the geometric decomposition of a Birkhoff sum $S_m f $  into special Birkhoff sums (see \S~\ref{sec:renormalisation}),  for any $u\in [y,z]$, since $0\leq m<q_k^j$ we can write
$$\sum_{\ell=0}^{m-1} f (T^\ell u) = \sum_{n=0}^{k} \sum_{i=0}^{a_n-1} \left( f_n ( y_n^i) - f_n ( z_n^i) \right), 
$$
where $|a_n|\leq \Vert A_n\Vert $ and $u_n^i\in I_n$ for each $1\leq i\leq a_0$.  Let $[y_n^i, z_n^i]$ denote the image of $[y,z]$ under an iterate of $T$ to which $u_n^i$ belong.
 Since $f=\log DT$ and therefore $f_n= \log D T_n$ 
$$
f_n ( y_n^i) - f_n ( z_n^i) =   \int_{y_n^i}^{z_n^i}D \log D T_n (\chi) \mathrm{d} \chi = \int_{y_n^i}^{z_n^i} \eta_{T_n} (\chi) \mathrm{d} \chi.
$$
Thus, since $[y_n^i, z_n^i]$ is a subinterval of the inducing interval $I_n$, by the assumptions on $\mathcal{C}^2$-exponential convergence and Lemma~\ref{nLdecay}  we get that 
$$
\left| \sum_{\ell=0}^{m-1}\left( f (T^\ell y)- f( T^\ell z ) \right)\right| =  \left| \sum_{n=0}^{k}\sum_{i=0}^{a_n-1} \int_{y_n^i}^{z_n^i} \eta_{T_n} (\chi) \mathrm{d} \chi  \right| \leq  \sum_{n=0}^{k}\sum_{i=0}^{a_n-1} \left|\int_{y_n^i}^{z_n^i} \eta_{T_n} (\chi) \mathrm{d} \chi  \right|
$$

\noindent
 We now estimate $\left|\int_{y_j^i}^{z_j^i} \eta_{T_j} (\chi) \mathrm{d} \chi  \right|$.
 Since $\eta_{T_j} = {\eta_{\mathcal{R}^jT}}/{|I_j|}$ and $[y_j^i, z_j^i]\subset I_k$, we get 
\begin{equation}\label{toestimate} \left|\int_{y_j^i}^{z_j^i} \eta_{T_j} (\chi) \mathrm{d} \chi  \right| \leq \frac{|z_j^i - y_j^i|}{|I_j|} || \eta_{\mathcal{R}^jT} ||_{\infty} \leq  \frac{|I_k|}{|I_j|} || \eta_{\mathcal{R}^jT} ||_{\infty} .
\end{equation}
Since $\Vert \eta_{\mathcal{R}^jT} ||_{\infty} $ descreases exponentially by Lemma \ref{nLdecay} and the sequence $(|I_j|)_j$ is decreasing so that since $j\leq k$ we have $|I_k|\leq |I_j|$, this immediately gives 
$$ \left|\int_{y_j^i}^{z_j^i} \eta_{T_j} (\chi) \mathrm{d} \chi  \right| \leq C' \rho_2^j,$$
which will be used to estimate values of $j$ comparable with $k$. To estimate small values of $j$, 
let us recall that the lenght vectors are transformed by the cocycle  by $\lambda_j = A^\dag(j,k) \lambda_k$, see \eqref{lengthsrelation},  so that, since the matrices $(A_{j})_j$ and hence also the matrices $(A_{j}^\dag)_j$ are positive (see Condition (P) of Definition \ref{def:DC}), the lenghts  $|I_j|=\Vert \lambda_j \Vert$ satisfy
$
|I_j|= \Vert   A(j,k)^\dag \lambda_k \Vert\geq d^{k-j} |I_k|
$. 
Thus, combining this  estimate with  Lemma \ref{nLdecay} to estimate \eqref{toestimate},  we obtain that 
$$ \left|\int_{y_j^i}^{z_j^i} \eta_{T_j} (\chi) \mathrm{d} \chi  \right| \leq C' \lambda_1^{k- j}{\lambda_2}^j.$$
where $C' > 0$, $\lambda_2<1$ and $\lambda_1<1/d$,  

\vspace{2mm}
We now consider the sum $\sum_{j=0}^{k}\sum_{i=0}^{a_j-1} \left|\int_{y_j^i}^{z_j^i} \eta_{T_j} (\chi) \mathrm{d} \chi  \right|$. Since  $a_j\leq \Vert A_j\Vert $  (see \S~\ref{sec:renormalisation}) and  the matrices norms $\Vert A_j\Vert $ grow subexponentially by (S) of Definition \ref{def:DC},  for any $\lambda_3<\lambda_2$, for some $C_0>0$ we have
$$\sum_{j=0}^{k}\sum_{i=0}^{a_j-1} \left|\int_{y_j^i}^{z_j^i} \eta_{T_j} (\chi) \mathrm{d} \chi  \right|\leq C'  \sum_{j=0}^{k} a_j\, \lambda_1^{k- j}{\lambda_2}^j\leq C_0  \sum_{j=0}^{k}   \lambda_1^{k- j}{\lambda_3}^j. $$

We now consider two cases. If $j\geq k/2$, then, since $\lambda_1<1$,  $\lambda_1^{k- j}{\lambda_3}^j\leq {\lambda_3}^j\leq  \lambda_3^{k/2}. $
 If $j\leq k/2$, then, since $\lambda_3<1$,  $\lambda_1^{k- j}{\lambda_3}^j\leq\lambda_1^{k- j}\leq  \lambda_1^{k/2} $. Thus,  taking $\lambda:=  \max \{\lambda_1, \lambda_3 \}$, we can estimate
 $$
  \sum_{j=0}^{k}  \lambda_1^{k- j}{\lambda_3}^j\leq \sum_{j=0}^{k}  \lambda^{k/2}\leq k  \lambda^{k/2} . $$
Since this term decreases exponentially,  we get the main statement of the Lemma.

\end{proof}

We now have all the ingredients to prove Proposition~\ref{prop:specialvariation}.
\begin{proof}[Proof of Proposition~\ref{prop:specialvariation}]
Assume WLOG that $q>p$ and let $N:= N_k(p,q)$ be the number of intersections of $\{  z_p, T z_p, \dots , z_{q-1}\} $ with $I_k$.  
Since $\varphi$ satisfies the cohomological equation $\varphi\circ T -\varphi = f$ where $f=\log DT$, a telescopic sum argument gives that, if $q>p$, $\varphi(z_q)-\varphi(z_p)=  S_{q-p} \, f (z_p)$. Thus,   combining the estimates given by the basic decomposition in Lemma~\ref{lemma:basicestimate} (whose assumptions hold since both $z_p$ and $z_q$ belong to the same floor of $\mathcal{P}_k$) and 
by Proposition~\ref{prop:reminderestimate} (which can be applied since we assume that $d_{\mathcal{C}^2} (\mathcal{R}^k T, \mathcal{I}_d)\leq C \lambda^k$),  gives that 
 we have that
$$
|\varphi(z_p)-\varphi(z_q)|= | S_{q-p} \, f (z_p)  | \leq N \Vert f_k \Vert_{\infty} + |R_k^j(f)| \leq (N+1) \Vert f_k \Vert_{\infty} + C\lambda^k. 
$$
Since, by the convergence of renormalisation assumption, in view of Lemma~\ref{SBSviaR}, also $\Vert f_k \Vert_{\infty}$ decay exponentially in $k$, we get the desired conclusion.  
\end{proof}

\subsubsection{General case via single orbit approximations.}\label{sec:appr}
We will now build approximations of any pair $x,y$ of points in $[0,1]$ though sequences of points in $\mathcal{O}_T(0)$. Our approximations will have the properties listed in the following Proposition. An illustration of the approximations and the relative positions of the points is given in Figure~\ref{fig:appr}.


\begin{figure}[!h]
	\begin{center}
		\def\svgwidth{0.8 \columnwidth}
			\input{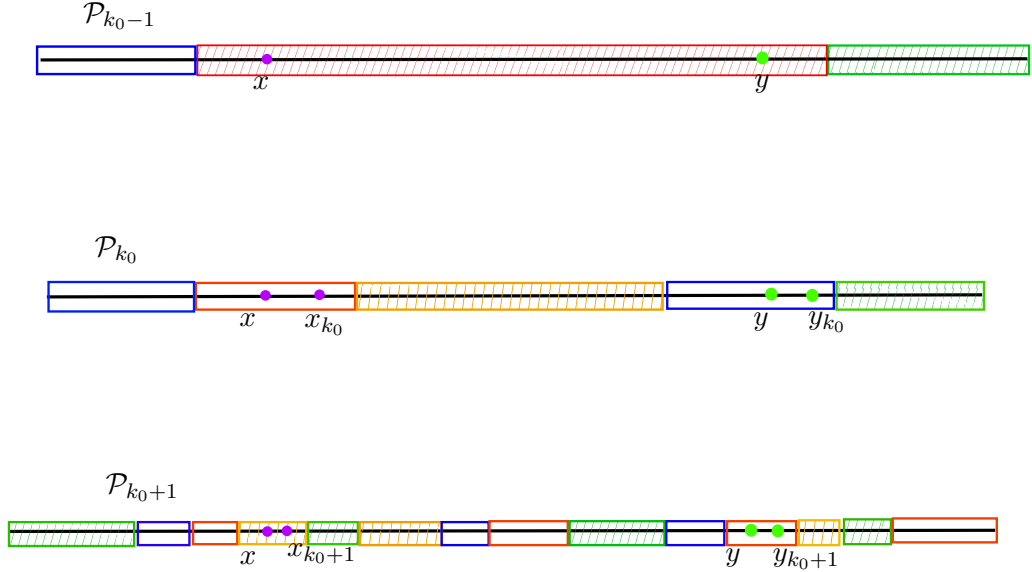}
	\end{center}
	\caption{Schematic representation of the approximating sequences.}
			\label{fig:appr}
\end{figure}

\begin{proposition}[Fixed scale single orbit approximation.]\label{prop:appr}
Let $T_0$ be a Keane IET and let $T$ be topologically conjugate to $T_0$. 
Given any $x,y\in [0,1]$, assume that $k_0\in \mathbb{N}$ is such that $x$ and $y$ belong to the same floor $F$ of $\mathcal{P}_{k_0-1}$ but distinct floors of $\mathcal{P}_{k_0}$. Then  
there exists sequences $(x_k)_{k\in \N}$, $(y_k)_{k\in \N}$ such that:
\begin{itemize}
\item[(i)] all points $x_k, y_k$, $k\in \mathbb{N}$ belong to the orbit $\mathcal{O}_T(0)$ of $0$;
\item[(ii)] the sequences approximate $x$ and $y$, i.e.~$\lim_{k\to \infty} x_k = x$ and $\lim_{k\to \infty} y_k = y$;
\item[(iii)] for any $k\geq k_0$, $x_k$ belongs to the floor of $\mathcal{P}_{k}$ which contains $x$ and  $y_k$  to the floor of $\mathcal{P}_{k}$ which contains $y$;
\item[(iv)]   the number of intersections of the orbit segments which join $x_{k_0}$ with $y_{k_0}$ with $I_{k_0-1}$ is at most $2 \Vert A_{k_0-1}\Vert \Vert A_{k_0}\Vert$; 
\item[(v)] for any $k\geq k_0$, the number of intersections of the orbit segments which join $x_k$ with $x_{k+1}$ and $y_k$ and $y_{k+1}$ with $I_{k}$ is at most $2 \Vert A_k\Vert \Vert A_{k+1}\Vert$.
\end{itemize}
\end{proposition}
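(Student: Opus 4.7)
The construction relies on two facts: $(a)$ the orbit $\mathcal{O}_T(0)$ is dense in $[0,1]$, since the topological conjugacy with the Keane IET $T_0$ makes $T$ minimal and hence $\mesh(\mathcal{P}_k)\to 0$ by Remark~\ref{meshtozero}; and $(b)$ the tower $\mathcal{P}_{k+1}^{j'}$ is obtained by stacking at most $\Vert A_k\Vert$ sub-towers of level-$k$ towers. For $w\in\{x,y\}$, write the floor of $\mathcal{P}_k$ containing $w$ as $F^w_k = T^{\mu^w_k}(I_k^{\alpha^w_k})$ with $0\leq \mu^w_k<q_k^{\alpha^w_k}$. The inclusion $F^w_{k+1}\subset F^w_k$ forces the sub-tower of $\mathcal{P}_{k+1}^{\alpha^w_{k+1}}$ carrying $F^w_{k+1}$ to be a copy of $\mathcal{P}_k^{\alpha^w_k}$; consequently $\mu^w_{k+1}\geq \mu^w_k$ and for any $u\in I_{k+1}^{\alpha^w_{k+1}}$, the point $T^{\mu^w_{k+1}-\mu^w_k}(u)$ lies in $I_k^{\alpha^w_k}$ and is reached from $u$ by at most $\Vert A_k\Vert$ returns to $I_k$ under $T_k$.

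The plan is to define $x_k := T^{\mu^x_k}(\tilde z^x_k)$ with base projections $\tilde z^x_k \in I_k^{\alpha^x_k}\cap \mathcal{O}_T(0)$ built inductively so that the $T_k$-distance between consecutive base projections is controlled; properties (i) and (iii) will hold by construction, while (ii) will follow from $\mesh(\mathcal{P}_k)\to 0$ together with $x\in F^x_k$, $x_k\in F^x_k$. Given $\tilde z^x_k$, I construct $\tilde z^x_{k+1}$ via a two-stage procedure: first iterate $T_k$ from $\tilde z^x_k$ until the orbit first meets $I_{k+1}$, which requires at most $\Vert A_k\Vert$ steps of $T_k$ by $(b)$; then, from the resulting point of $I_{k+1}$, iterate $T_{k+1}$ until one first hits $I_{k+1}^{\alpha^x_{k+1}}$. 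The second stage terminates in at most $2\Vert A_{k+1}\Vert$ iterates of $T_{k+1}$: by the positivity condition $(P)$ of Definition~\ref{def:DC}, each $T_{k+2}$-return from $I_{k+2}$ visits every $I_{k+1}^i$ at least once while consisting of at most $\Vert A_{k+1}\Vert$ $T_{k+1}$-steps, and reaching $I_{k+2}$ from a generic point of $I_{k+1}$ adds at most another $\Vert A_{k+1}\Vert$ $T_{k+1}$-steps. Since each $T_{k+1}$-step costs at most $\Vert A_k\Vert$ $T_k$-steps, the total number of $T_k$-iterations from $\tilde z^x_k$ to $\tilde z^x_{k+1}$ is bounded by a constant multiple of $\Vert A_k\Vert\Vert A_{k+1}\Vert$. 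Because the orbit segment joining $x_k$ to $x_{k+1}$ is the $T^{\mu^x_k}$-translate of the orbit segment joining $\tilde z^x_k$ to $\tilde v^x_k := T^{\mu^x_{k+1}-\mu^x_k}(\tilde z^x_{k+1}) \in I_k^{\alpha^x_k}$, it visits $I_k$ the same number of times, yielding (v). The sequence $(y_k)_k$ is built identically.

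For (iv), at level $k_0$ I instead anchor $x_{k_0}$ and $y_{k_0}$ to a common base point $\tilde z\in I_{k_0-1}^{\alpha_{k_0-1}}\cap \mathcal{O}_T(0)$ in the shared level-$(k_0-1)$ floor $F$. Applying the two-stage procedure above with indices $k_0-1$, $k_0$ in place of $k$, $k+1$, once towards $I_{k_0}^{\alpha^x_{k_0}}$ and once towards $I_{k_0}^{\alpha^y_{k_0}}$, produces base projections $\tilde z^x_{k_0}$ and $\tilde z^y_{k_0}$ each reachable from $\tilde z$ in at most a constant multiple of $\Vert A_{k_0-1}\Vert\Vert A_{k_0}\Vert$ iterates of $T_{k_0-1}$. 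Concatenating the two orbit segments through the common point $\tilde z$ provides a single segment from $x_{k_0}$ to $y_{k_0}$ meeting $I_{k_0-1}$ at most $2\Vert A_{k_0-1}\Vert\Vert A_{k_0}\Vert$ times.

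The main technical obstacle is controlling the second stage of the procedure: bounding, purely in terms of $\Vert A_{k+1}\Vert$, the first $T_{k+1}$-hitting time from an arbitrary point of $I_{k+1}$ to the specific sub-interval $I_{k+1}^{\alpha^x_{k+1}}$. Without the positivity $(P)$ of $A_{k+1}$ ensured by TPG, this hitting time could be arbitrarily large compared to $\Vert A_{k+1}\Vert$, since a vanishing entry in the relevant column of $A_{k+1}$ would mean some $I_{k+1}^i$ never visits $I_{k+1}^{\alpha^x_{k+1}}$ within a single $T_{k+2}$-return. The positivity of the acceleration matrices, guaranteed by the TPG Diophantine-like condition of Definition~\ref{def:DC}, is therefore what underlies the quantitative bound and the whole construction.
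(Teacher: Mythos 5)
Your construction is essentially the paper's: the paper proves an inductive-step lemma which, starting from a point of $\mathcal{O}_T(0)$ in a floor of $\mathcal{P}_k$, finds a later orbit point in any prescribed sub-floor of $\mathcal{P}_{k+1}$ after at most $2\Vert A_k\Vert\,\Vert A_{k+1}\Vert$ visits to $I_k$, by letting the orbit cross a full tower of level $k+2$ and using positivity of the acceleration matrices to guarantee the hit; it then iterates this lemma, seeds the induction with a point of $\mathcal{O}_T(0)$ supplied by minimality, and gets (ii) from $\mesh(\mathcal{P}_k)\to 0$. Your two-stage procedure (reach $I_{k+2}$, then use one full return, which by positivity of the relevant matrix meets every $I_{k+1}^i$) is the same mechanism with different bookkeeping: you track base projections $\tilde z^w_k\in I_k^{\alpha^w_k}$ and push up by $T^{\mu^w_k}$, whereas the paper targets the floor containing $x$ directly. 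Three small points to tighten. First, your bound is a constant multiple of $\Vert A_k\Vert\,\Vert A_{k+1}\Vert$ rather than the stated $2\Vert A_k\Vert\,\Vert A_{k+1}\Vert$ (the final climb from $\tilde z^x_{k+1}$ to height $\mu^x_{k+1}$ adds up to $\Vert A_k\Vert$ further visits); this is harmless for the application in Proposition~\ref{prop:numerator}, but strictly weaker than the statement. Second, a time-translate of an orbit segment does not in general meet $I_k$ the same number of times; here the counts do agree, but only because the visit at the initial base point $\tilde z^x_k$ which leaves the window is exactly compensated by the visit at $\tilde v^x_k\in I_k^{\alpha^x_k}$ which enters it (both residual climbs staying inside a single level-$k$ tower), and this is worth saying explicitly rather than invoking translation. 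Third, for (iv) the concatenation through $\tilde z$ is not itself an orbit segment from $x_{k_0}$ to $y_{k_0}$; the correct phrasing (as in the paper) is that the orbit segment between $x_{k_0}$ and $y_{k_0}$ is contained in the longer of the two segments issued from $\tilde z$, so its visit count is bounded by the larger of the two bounds (in particular by their sum).
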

\noindent The Proposition is proved shortly below. Let us first state and prove an auxiliary Lemma which will allow to build the approximations inductively.

\begin{lemma}[Inductive step]\label{lemma:inductivestep} Under the assumption of Proposition~\ref{prop:appr}, 
given a point $z_p\in \mathcal{O}_T(0)$, let $ F_k$ be the floor of $\mathcal{P}_k$ such that $z_p\in F_k$. Choose any floor $F_{k+1}$
 in  $\mathcal{P}_{k+1}$ such that $F_{k+1}\subset F_k$. Then we can find another point $z_q\in \mathcal{O}_T(0)$ with $q>p$ such that $z_q\in F_{k+1}$ and  the number of intersections of $\{ z_p, z_{p+1}, \dots, z_{q-1}\}$ with $I_{k}$ is at most $2 \Vert A_k\Vert \Vert A_{k+1}\Vert $.
\end{lemma}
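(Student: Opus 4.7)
The plan is to construct $z_q$ by tracking the forward $T$-orbit of $z_p$ through three successive phases, exploiting the Rohlin-tower cutting-and-stacking description of $\mathcal{P}_{k+1}$ in terms of $\mathcal{P}_k$ recalled in \S\ref{sec:renormalisation} together with the strict positivity condition (P) of Definition~\ref{def:DC}.

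In the first phase, since $z_p\in F_k = T^m(I_k^j)$ for some $0\le m<q_k^j$, I would iterate $T$ exactly $q_k^j - m$ times to reach the first return point $w_0 := z_{p + q_k^j - m}\in I_k$; the intermediate orbit points lie strictly above the base of the tower $\mathcal{P}_k^j$ and hence miss $I_k$, so this phase contributes only the single intersection $w_0$. In the second phase, I would iterate the first-return map $T_k\colon I_k\to I_k$ starting from $w_0$ until reaching the target base $I_{k+1}^{j'}$: the cutting-and-stacking description exhibits $I_k$ as a $T_k$-skyscraper over $I_{k+1}$ whose columns have heights $\sum_i(A_k)_{ij}\le \Vert A_k\Vert$, so from $w_0$ we reach $I_{k+1}$ within $\Vert A_k\Vert$ iterates of $T_k$; from there the analogous $T_{k+1}$-skyscraper of $I_{k+1}$ over $I_{k+2}$ has columns of height bounded by $\Vert A_{k+1}\Vert$, each of which contains at least one floor inside $I_{k+1}^{j'}$ thanks to the strict positivity of $A_{k+1}$, so $I_{k+1}^{j'}$ is met within $O(\Vert A_{k+1}\Vert)$ iterates of $T_{k+1}$, i.e.~within $O(\Vert A_k\Vert \Vert A_{k+1}\Vert)$ iterates of $T_k$; each such iterate records one intersection with $I_k$. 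In the third phase, I would set $z_q := T^{m'}(w_L)\in F_{k+1}$, where $w_L\in I_{k+1}^{j'}$ is the point produced in the second phase and $m'$ is the level of $F_{k+1}$ inside $\mathcal{P}_{k+1}^{j'}$; since the tower $\mathcal{P}_{k+1}^{j'}$ is a stack of exactly $\sum_i(A_k)_{ij'}\le \Vert A_k\Vert$ subtowers of $\mathcal{P}_k$, its floors meeting $I_k$ are precisely the bases of these subtowers, so this phase contributes at most $\Vert A_k\Vert$ further intersections. A careful accounting of the three contributions, using $\Vert A_k\Vert, \Vert A_{k+1}\Vert\ge 1$ to absorb the lower-order summands, then yields the claimed bound $2\Vert A_k\Vert \Vert A_{k+1}\Vert$.

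The main difficulty lies in the second phase: controlling, uniformly in terms of $\Vert A_{k+1}\Vert$, the first hitting time of the \emph{specific} subinterval $I_{k+1}^{j'}\subset I_{k+1}$ under the induced map $T_{k+1}$. Without the strict positivity of $A_{k+1}$ guaranteed by condition (P) of Definition~\ref{def:DC}, this hitting time could be arbitrarily large relative to $\Vert A_{k+1}\Vert$ and the inductive construction in Proposition~\ref{prop:appr} would break down. This is precisely why the renormalisation operator chosen in \S\ref{sec:DC} is taken to be the positive acceleration of Rauzy--Veech induction rather than $\mathcal{V}$ or $\mathcal{Z}$ themselves.
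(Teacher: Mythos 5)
Your construction is essentially the paper's: the paper also produces $z_q$ by letting the orbit of $z_p$ climb to the top of the tower of $\mathcal{P}_{k+2}$ containing it and then cross the next tower of $\mathcal{P}_{k+2}$, using positivity of the incidence matrix between levels $k+1$ and $k+2$ to guarantee that a full crossing of any level-$(k+2)$ tower sweeps through an entire subtower of the target tower $\mathcal{P}_{k+1}^{j'}$ (hence through $F_{k+1}$); your phases 1, 2a and the ``finish the current column'' part of 2b are exactly the climb to the top of the first level-$(k+2)$ tower, and the rest of 2b together with phase 3 is contained in the crossing of the second one, since entering a subtower of $\mathcal{P}_{k+1}^{j'}$ at its base and climbing $m'$ floors is the same as waiting for the orbit to enter $F_{k+1}$. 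So the route is the same; only the bookkeeping (induced maps $T_k$, $T_{k+1}$ versus direct $T$-orbit crossings of $\mathcal{P}_{k+2}$-towers) differs.

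The one step that does not work as written is the final accounting. Your three phases give at most $\Vert A_k\Vert$ (climb to $I_{k+1}$), plus $2\Vert A_{k+1}\Vert\cdot\Vert A_k\Vert$ (at most $2\Vert A_{k+1}\Vert$ iterates of $T_{k+1}$, each worth at most $\Vert A_k\Vert$ visits to $I_k$), plus $\Vert A_k\Vert$ (climb inside $\mathcal{P}_{k+1}^{j'}$), i.e.\ $2\Vert A_k\Vert\Vert A_{k+1}\Vert + 2\Vert A_k\Vert$; the extra $2\Vert A_k\Vert$ cannot be ``absorbed'' into $2\Vert A_k\Vert\Vert A_{k+1}\Vert$ using $\Vert A_k\Vert,\Vert A_{k+1}\Vert\geq 1$, so as stated the last sentence is false. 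The repair is the observation above: the whole orbit segment from $z_p$ to $z_q$ is contained in (the part above $z_p$ of) one tower of $\mathcal{P}_{k+2}$ together with part of one further tower of $\mathcal{P}_{k+2}$, and each tower of $\mathcal{P}_{k+2}$ contains at most $\Vert A_{k+1}A_k\Vert\leq\Vert A_{k+1}\Vert\,\Vert A_k\Vert$ floors lying in $I_k$ (the column sums of $A_{k+1}A_k$), which yields the stated bound $2\Vert A_k\Vert\Vert A_{k+1}\Vert$ — this is precisely how the paper counts. (The overshoot by $2\Vert A_k\Vert$ would in any case be harmless for Proposition~\ref{prop:appr} and the estimates downstream, where only the subexponential growth of $\Vert A_k\Vert$ matters, but the lemma as stated requires the sharper count.)
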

\begin{proof}
Fix $F_{k+1}\subset F_k$ as in the statement. Consider an orbit segment $\{ z_p, T z_p , \dots, T^{n-1} z_p\}$ of length $n$. We claim that if $n:=2\max_l q_{k+2}^l$ there exists a point $z_q$ with $p\leq q < p+n$ such that $z_q\in F_{k+1}$.
 Let  $i, j \in \{1,\dots, d\}$ be the two indexes such that, respectively, $F_{k+1}\in \mathcal{P}_k^i$ and $F_{k+2}\in \mathcal{P}_k^j$. Notice that, by the dynamics in a Rohlin towers, if a point belongs to $ I_{k+1}^j$, then its orbit up to time $q_{k+1}^j$ enters $F_{k+1}$. Moreover, since  $z_p\in F_{k+2}$, the orbit of $x$ up to time $2\max_l q_{k+2}^l$ moves across at least one full tower of $\mathcal{P}_{k+2}$ (since in time at most $q_{k+2}^j$, $z_p$ reaches the top of the tower $\mathcal{P}_{k+1}^j$ to which it belongs and then it has time to cross from bottom to top the next tower of $\mathcal{P}_{k+1}$ which it visits). 
 
 Since we are considering a positive acceleration, $A_{k+2}>0$, which (by the dynamical interpretation of the entries, see \S~\ref{sec:renormalisation}) means that, in each tower of $\mathcal{P}_{k+2}$, there are levels which belong to $I_{k+1}^i$. Thus, the orbit of $z_p$ up to time $n$ will enter $I_{k+1}^i$ as well as each floor of the tower $\mathcal{P}_{k+1}^i$,  so in particular it will visit $F_{k+1}\subset \mathcal{P}_{k+1}^i$. If $m$ is then the index $p\leq m<p+n$ such that $T^m (x_p) \in F_{k+1} $, we set $q:= p+m$. 


\begin{figure}[!h]
	\begin{center}
		\def\svgwidth{0.6 \columnwidth}
			\input{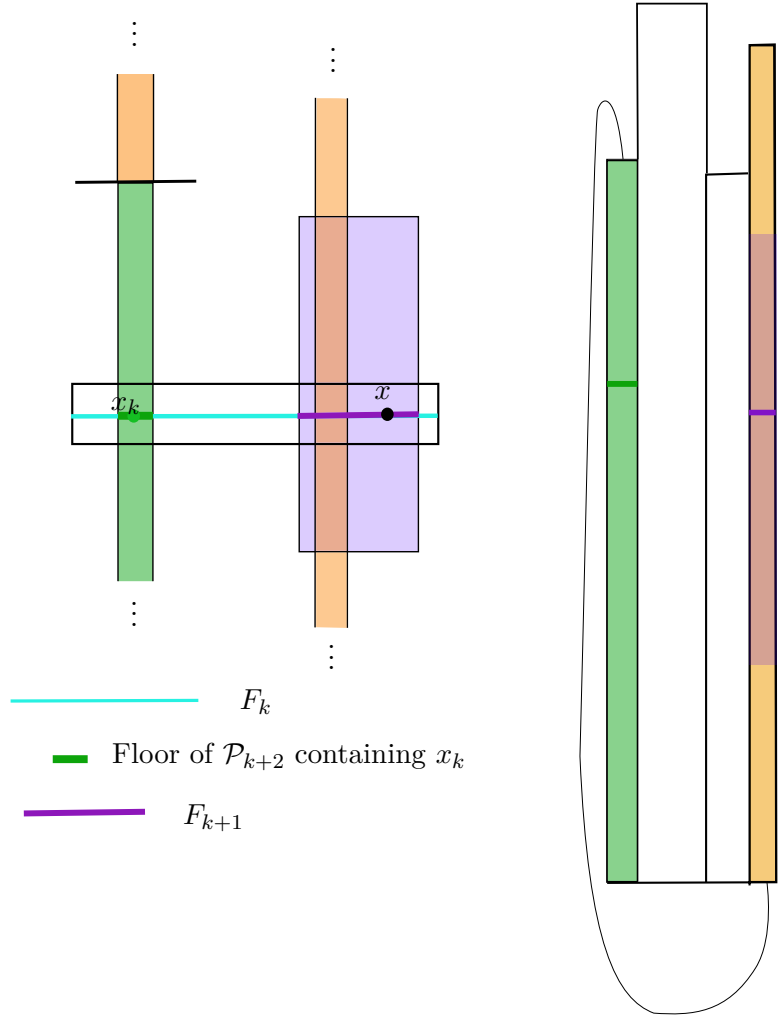}
	\end{center}
\caption{Inductive step of the approximation construction (picture typo: $F_{k+2}$ should have been green small floor which contains $x_k$). The index $j$ is the light green color (of the tower of level $k+2$ which contains $x_k$ in figure), the index $i$ is the light blue color of level $k+1$ which contains the chosen floor of level $k+1$ in figure).  In time $n$, $x$ reaches the top of the green tower to which it belongs and goes through all of the next tower of level $k+2$, orange in figure.  So its orbits runs through all of the light blue tower and hence enters the desired floor before time $n$.}
			\label{fig:step}
\end{figure}

To conclude, we only need to estimate the cardinality of $\{ z_p, z_{p+1}, \dots, z_{q-1}\}\cap I_{k}$. 
Notice first of all that, 
 by the above construction, the orbit of $z_p$ of length $q-p$ crosses at most two towers of $\mathcal{P}_{k+2}$; more precisely  $q-p\leq q_{k+2}^j+ q_{k+2}^l$ where $q_{k+2}^j$ is the height of the tower  $\mathcal{P}_{k+2}^j$ which contains $x$ and $q_{k+2}^l$ is the height of the successive tower of  $\mathcal{P}_{k+2}$ which contains the orbit $\mathcal{O}_T(x_p)$ (in particular it contains the first visit of $\mathcal{O}_T(x_p)$  to $I_{k+2}$). 
Therefore, the number of visits we want to estimate is bounded above by twice the number of floors of towers of $\mathcal{P}_{k+2}$ which belong to $I_k$. This, by the dynamical interpretation of the cocycle matrices entries, is estimated by twice the maximum  norm of the columns of $A_{k+1}\, A_k $ (see \S~\ref{sec:renormalisation} and recall also that, by \eqref{heightsrelation},   $q_{k+1}= A_{k+1}\, A_k\, q_k$)
 and therefore is bounded by $2 \Vert A_{k+1} A_{k} \Vert\leq  2 \Vert A_{k+1} \Vert \Vert A_{k} \Vert$. This gives the desired estimate  hence concludes the proof.
\end{proof}

We can now prove by induction the existence of the approximation sequences in Proposition~\ref{prop:appr}.
\begin{proof}[Proof of Prop.~\ref{prop:appr}] 
Let $F$ be the floor of $\mathcal{P}_{k_0-1}$ which by assumption contains both $x$ and $y$ and let us denote by $F(x)$ and $F(y)$ respectively the two floors of  $\mathcal{P}_{k_0}$ which contain $x$ and $y$ respectively. 
Since $T$ is minimal (being topologically conjugate to a Keane IET, which is minimal by \cite{Ke:int}, see \S~\ref{giet}), the orbit $\mathcal{O}_T(0)$ is dense. Let $p_0$ be such that $z_{p_0}:=T^{p_0}(0)\in F(x)$. 

\smallskip
\noindent {\it First $k_0$ steps of the decomposition.} Let us first define the points $x_{k_0}, y_{k_0}$ in the decomposition. Let 
 Applying Lemma~\ref{lemma:inductivestep} twice (taking both times $z_p:=z_{p_0}$ and $F_{k}=F$, but taking $F_{k+1}$  to be $F(x)$ or $F(y)$ respectively), we find two points in $\mathcal{O}_T(z_{p_0})\subset \mathcal{O}_T(0)$, that we will call $x_{k_0}:= T^{i_0}(0)$ and  $y_{k_0}:= T^{j_0}(x)$ respectively (where by construction $i_0,j_0\geq p_0$) such that $x_{k_0}\in F(x)$ and $y_{k_0}\in F(y)$ and the cardinality  of  the intersections with $I_{k_0-1}$ and the orbit segments 
$$
\{  T^{p_0}(0), T^{p_0+1}(0),  \dots, T^{i_0-1}(0)\} \quad \textrm{and} \quad  \{ T^{p_0}(0),  T^{p_0+1}(0),   \dots, T^{j_0-1}(0)\} 
$$
are both bounded above by $\Vert A_{k_0-1}\Vert\, \Vert A_{k_0}\Vert$. Therefore the same bound holds for the number of intersections between $I_{k_0-1}$ and the orbit segment between $x_{k_0}$ and $y_{k_0}$, which is contained in one of the above segments (since if  $i_0\geq j_0$, 
it has the form $\{ T^{j_0}(0), T^{j_0+1}(0), \dots, T^{i_0-1} 0\} $ and it is contained in the first segment above, or if $i_0<j_0$, it is similarly contained in the second). 
We can then set $x_k=x_{k_0}$ and  $y_k=y_{k_0}$ for all $0\leq k\leq k_0$. 
Notice that property $(iv)$, as well as property $(i)$ for $0\leq k\leq k_0$, so far hold by construction. 
Furthermore, property $(iii)$ for $0\leq k\leq k_0$ holds since it holds for $k=k_0$ and the partitions $(\mathcal{P}_n)_{n}$ are nested, see \S~\ref{sec:renormalisation} (so if $x_k=x_{k_0}$ belongs to the same floor $F_{k_0}$ of $\mathcal{P}_{k_0}$ which contain $x$, $x$ and $x_k$ also belong to the same floor of $\mathcal{P}_{k}$, since $k\leq k_0$).

\smallskip
\noindent {\it Successive steps $k>k_0$ of the decomposition.} We will now define $x_k$ and $y_k$ for $k> k_0$ inductively. The construction is identical for $(x_k)_{k>  k_0}$ and $(y_k)_{k > k_0}$, so we will do it only for  $(x_k)_{k>  k_0}$. Assume  we have already define $x_{k}$ for some $k\geq k_0$ and that, as part of inductive assumption, $x_{k}\in F_{k}$ where $F_{k}$ is the floor of $\mathcal{P}_{k}$ which contains $x$. Let $F_{k+1}$ be the floor of $\mathcal{P}_{k+1}$ which contains $x$. Clearly $F_{k+1}\subset F_{k}$. Thus, by applying Lemma~\ref{lemma:inductivestep} (taking as $z_p$ in the statement of the Lemma the point $x_{k}$), we find another point $x_{k+1}\in \mathcal{O}_T(x_k)\subset \mathcal{O}_T(0)$ such that the number of intersections of the orbit segment between $x_{k} $ and $x_{k+1}$ with $I_{k}$ is at most $\Vert A_k\Vert \Vert A_{k+1}\Vert$.  
Thus, this construction gives shows that property $(iii)$ and $(v)$, as well as property $(i)$ for the remaining $ k\geq k_0$, hold. 

\smallskip
\noindent {\it Approximation property.}  We are only left to check $(ii)$; this property though follows from the construction and 
 Remark~\ref{meshtozero}, that shows that, since $T$ is by assumption conjugate to a Keane IET (which is therefore minimal, see \S~\ref{giet}),  $\mesh(\mathcal{P}_k)$ goes to zero as $k$ grows; indeed, if we denote by $F_k(x)$ and respectively $F_k(y)$ the floors of $\mathcal{P}_k$ which contain respectively $x$ and $y$, from property $(iii)$ we have that
$$
|x_k-x|\leq |F_k(x)|\leq  \mesh(\mathcal{P}_k), \qquad |y_k-y|\leq |F_k(y)|\leq  \mesh(\mathcal{P}_k) $$
and therefore $0\leq \limsup_{k\to \infty}|x_k-x| \leq \lim_{k\to \infty} \mesh(\mathcal{P}_k)=0$, so $\lim_{k\to \infty}x_k=x$ and, analogously, $\lim_{k\to \infty}y_k=y$. This proves also $(ii)$ and therefore concludes the proof.
\end{proof}

We can now combine Proposition~\ref{prop:appr}, which provides approximation sequences, with what proved in the previous section in the special case of points in $\mathcal{O}_T(0)$ (namely Proposition~\ref{prop:specialvariation}), to estimate $|\varphi(x)-\varphi(y)|$ any $x,y\in [0,1]$ as follows. 
\begin{proposition}\label{prop:numerator}
There exists $C>0$ and $\lambda_2<1$ (with $\lambda_2>\lambda_1$ where $\lambda_1 $ appears in Lemma~\ref{lemma:denom}) such that,  
for any $0<x< y<1$, 
$$
|\varphi(x)-\varphi(y)|\leq C \lambda_2^{k(x,y)}.
$$
\end{proposition}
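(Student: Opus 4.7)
Set $k_0 := k(x,y)$. By Lemma~\ref{rk:order}, $[x,y]$ is contained in the union of at most two floors of $\mathcal{P}_{k_0-1}$, which are adjacent when distinct. I will treat in detail the main case in which $x$ and $y$ belong to a \emph{common} floor of $\mathcal{P}_{k_0-1}$, so that Proposition~\ref{prop:appr} applies directly; the remaining (adjacent) case reduces to it by inserting a bridging point $z \in \mathcal{O}_T(0)$ close to the shared endpoint, splitting $[x,y] = [x,z]\cup[z,y]$ so that each sub-interval lies in a common floor of $\mathcal{P}_{k_0-1}$, and invoking the triangle inequality (both scales are then comparable to $k_0$).

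In the main case, invoke Proposition~\ref{prop:appr} to obtain sequences $(x_k), (y_k) \subset \mathcal{O}_T(0)$ with $x_k \to x$, $y_k \to y$ and satisfying (iii)--(v). Continuity of $\varphi = \log Dh$ gives the absolutely convergent telescoping identity
\begin{equation*}
\varphi(x)-\varphi(y) = \bigl[\varphi(x_{k_0})-\varphi(y_{k_0})\bigr] + \sum_{k\geq k_0}\Bigl(\bigl[\varphi(x_{k+1})-\varphi(x_k)\bigr] - \bigl[\varphi(y_{k+1})-\varphi(y_k)\bigr]\Bigr).
\end{equation*}
Property (iii) of Proposition~\ref{prop:appr} places $x_{k_0}, y_{k_0}$ in a common floor of $\mathcal{P}_{k_0-1}$, and places $x_{k+1}$ in a sub-floor of the $\mathcal{P}_k$-floor that contains $x_k$ (similarly for the $y$-sequence). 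Therefore Proposition~\ref{prop:specialvariation} applies at level $k_0-1$ to the first bracket and at level $k$ to each of the subsequent ones; combined with the intersection-count bounds from (iv) and (v), this yields
\begin{equation*}
|\varphi(x_{k_0})-\varphi(y_{k_0})| \leq 2C\,\Vert A_{k_0-1}\Vert\,\Vert A_{k_0}\Vert\,\rho^{\,k_0-1}, \quad |\varphi(x_{k+1})-\varphi(x_k)| \leq 2C\,\Vert A_k\Vert\,\Vert A_{k+1}\Vert\,\rho^{\,k},
\end{equation*}
together with the analogous bound for the $y$-telescoping terms.

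Finally, condition (S) of Definition~\ref{def:DC} provides, for every $\epsilon>0$, a constant $C_\epsilon$ with $\Vert A_k\Vert \leq C_\epsilon\, e^{\epsilon k}$. Choosing $\epsilon$ so small that $\tilde\rho := \rho\, e^{2\epsilon} < 1$, the series collapses to a geometric bound
\begin{equation*}
|\varphi(x)-\varphi(y)| \leq C'\sum_{k\geq k_0-1}\tilde\rho^{\,k} \leq C''\,\tilde\rho^{\,k_0}.
\end{equation*}
Taking $\lambda_2$ to be any number strictly between $\max(\tilde\rho,\lambda_1)$ and $1$ produces the required inequality: since $\tilde\rho \leq \lambda_2 < 1$, one has $\tilde\rho^{k_0} \leq \lambda_2^{k_0}$, and by construction $\lambda_2 > \lambda_1$. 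The main obstacle in this final step is essentially combinatorial bookkeeping: one must carefully verify via property (iii) that each telescoping pair really sits in a common floor at the appropriate level, and use (S) to prevent the sub-exponential factors $\Vert A_k\Vert\,\Vert A_{k+1}\Vert$ from overwhelming the decay $\rho^k$. The genuinely hard analytic and combinatorial work has already been packaged into Propositions~\ref{prop:reminderestimate} and~\ref{prop:appr}, which together supply both the exponential control of broken Birkhoff sums and the single-orbit approximation with the precise intersection-count bounds needed here.
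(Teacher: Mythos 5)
Your treatment of the main case (both points in one floor of $\mathcal{P}_{k_0-1}$) is exactly the paper's argument: Proposition~\ref{prop:appr} for the single-orbit approximations, the telescoping decomposition, Proposition~\ref{prop:specialvariation} applied at level $k_0-1$ to the central term and at level $k$ to each increment with the intersection counts from (iv) and (v), and condition (S) to beat the subexponential factors $\Vert A_k\Vert\,\Vert A_{k+1}\Vert$ and sum a geometric series. That part is correct and needs no change.

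The gap is in your reduction of the two-floor case. You propose to insert a bridging point $z\in\mathcal{O}_T(0)$ \emph{close to} the shared endpoint $f$ of $F_1$ and $F_2$ and to split $[x,y]=[x,z]\cup[z,y]$ ``so that each sub-interval lies in a common floor of $\mathcal{P}_{k_0-1}$''. As stated this cannot work: any $z\neq f$ lies strictly inside one of the two floors, so exactly one of $[x,z]$, $[z,y]$ contains $f$ in its interior and therefore straddles both floors; and requiring $z=f$ with $z\in\mathcal{O}_T(0)$ is generically impossible, since floor endpoints need not belong to the orbit of $0$. The fix is simply to split at $f$ itself: the whole main-case machinery (Proposition~\ref{prop:appr} plus the telescoping estimate) applies to \emph{arbitrary} points in a common floor, so there is no need for the bridging point to lie in $\mathcal{O}_T(0)$ — this is what the paper does. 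You also assert without proof that ``both scales are then comparable to $k_0$''; this is the step one must actually verify to keep the exponent $k(x,y)$ in the final bound. The paper checks that $k(x,f)=k(f,y)=k_0$ exactly: on one hand $[x,f]\subset F_1$ and $[f,y]\subset F_2$ force both scales to be at least $k_0$; on the other hand $f$ is a partition point of $\mathcal{P}_{k_0-1}$, hence of $\mathcal{P}_{k_0}$, and since $[x,y]$ contains a full floor of $\mathcal{P}_{k_0}$ one finds full floors of $\mathcal{P}_{k_0}$ inside $[x,f]$ and inside $[f,y]$, so both scales are at most $k_0$. With these two adjustments your argument coincides with the paper's proof.
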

\begin{proof}  Let $k_0:=k(x,y)$ be the scale of $[x,y]$ (see Definition~\ref{def:scale}). By Lemma~\ref{rk:order}, there exists either at most two floors of $\mathcal{P}_{k_0-1}$ whose union contains $[x,y]$. Let us assume first that there is a \emph{unique} such floor $F \supset [x,y]$. The other case will be reduced to this at the very end of the proof. 
 
\smallskip
\noindent {\it Case $[x,y]\subset F$.} In this case, the assumptions of Proposition~\ref{prop:appr} hold, so the Proposition gives two sequences  $(x_n)_n, (y_n)_n$ approximating $x$ and $y$ and satisfying the properties $(i)-(v)$ of Proposition~\ref{prop:appr}.  Since $h$ and hence $\varphi:=\log DT$ are continuous and $\lim_k x_k= x$ and $\lim_k y_k= y$ (by property $(ii)$),
$$
|\varphi(x)-\varphi(y)|= \lim_{k\to \infty} |\varphi(x_k)-\varphi(y_k)|.
$$
Thus it suffices to show that for every $k$, $|\varphi(x_k)-\varphi(y_k)|\leq C_0 \lambda_2^{k_0}$ for some constant $C_0=C_0(x,y)>0$  and  exponenent $0<\lambda_2<1$ which are independent on $k$. To see this, we consider the two telescopic series
$$
\varphi(x_k) = \varphi({x_{k_0}})+ \sum_{k_0 \leq \ell <  k} \varphi(x_{\ell+1})-\varphi(x_\ell) , \qquad \varphi(y_k) = \varphi ({y_{k_0}})+ \sum_{k_0 \leq \ell <  k} \varphi(y_{\ell+1})-\varphi(y_\ell) 
$$
and therefore get the estimate 
$$
|\varphi(x_k)-\varphi(y_k)|\leq  \sum_{k_0 \leq \ell <  k} |\varphi(x_{\ell+1})-\varphi(x_\ell)| + |\varphi(x_{k_0})-\varphi(y_{k_0})| + \sum_{k_0\leq\ell < k} |\varphi(y_{\ell+1})-\varphi(y_\ell)|
$$
Let us estimate first the central term of this estimate, then the two series.  
For the central term, let us apply Proposition~\ref{prop:specialvariation} to   $x_{k_0}:=T^{i_0}0 $ and $y_{k_0}:=T^{j_0}0$, which by construction belong to $F$ (which by assumption in this case is the unique floor of $\mathcal{P}_{k_0-1}$ which contain $x$ and $y$ respectively). Since the quantity $N_{k_0-1}(i_0,j_0)$ (defined in \eqref{def:N}) satisfies $N_{k_0-1}(i_0,j_0)\leq 2 \Vert A_{k_0-1}\Vert \Vert A_{k_0}\Vert $ by property $(iv)$ of Proposition~\ref{prop:appr}, we  get that
$$
|\varphi(x_{k_0})-\varphi(y_{k_0})|\leq  C N_{k_0-1}(i_0,j_0)  \lambda^{k_0-1}\leq  2 C \Vert A_{k_0-1}\Vert \Vert A_{k_0}\Vert  \lambda^{k_0-1}.
$$
We will estimate now the series with terms $|\varphi(x_{\ell+1})-\varphi(x_\ell)|$. The series with terms of the form $|\varphi(y_{\ell+1})-\varphi(y_\ell)|$ can be estimated in an identical way. For each $k_0\leq \ell\leq k_0$, each term of the series can be estimated similarly to the central term above: since by property $(ii)$ $x_\ell$ and $x_{\ell+1}$ both belong to the same floor of $\mathcal{P}_{\ell}$ (namely the floor of $\mathcal{P}_{\ell}$ which contains $x$, directly by property $(ii)$ for $x_\ell$ and by property $(ii)$ together with the fact that $\mathcal{P}_{\ell+1}$ refines $\mathcal{P}_\ell$ in the case of $x_{\ell+1}$) and the number of intersections of the orbit segment between $x_\ell$ and $x_{\ell+1}$ with $\mathcal{P}_\ell$ is controlled by property $(v)$, we get by Proposition~\ref{prop:specialvariation} that
$$
|\varphi(x_\ell)-\varphi(x_{\ell+1})|\leq 2 C \Vert A_{\ell}\Vert \Vert A_{\ell+1}\Vert  \lambda^{\ell}.
$$
The differences $|\varphi(y_\ell)-\varphi(y_{\ell+1})|$ can be estimated in the same way. 

Combining the estimates of the central term and the series terms, and then recalling condition $(S)$ of Definition~\ref{def:DC}, we get
\begin{align*}
|\varphi(x_k)-\varphi(y_k)|& \leq 4 C \sum_{k_0\leq \ell \leq k}  \Vert A_{\ell}\Vert \Vert A_{\ell+1}\Vert  \lambda^{\ell}+2 C \Vert A_{k_0-1}\Vert \Vert A_{k_0}\Vert  \lambda^{k_0-1} \\ & \leq  4 C \sum_{k_0-1\leq \ell \leq k}  \Vert A_{\ell}\Vert \Vert A_{\ell+1}\Vert  \lambda^{\ell} \leq 4 C  (C_\epsilon)^2 \sum_{k_0-1\leq \ell \leq k} e^{\ell \epsilon} e^{(\ell+1)\epsilon}  \lambda^{\ell}.
\end{align*}
Since for any choice of $\lambda_2$ such that $ \lambda\leq \lambda_2<1$, choosing $\epsilon>0$ sufficiently small, this expression controlled by the tail $\sum_{\ell \geq k_0-1}\lambda_2^\ell$ of geometric series of step $\lambda_2<1$.  
Since this is a geometric series and recalling that $k_0=k(x,y)$, this gives the desired estimate.
\smallskip

\noindent {\it Case $[x,y]\subset F_1\cup F_2$.} We consider now the case in which there are \emph{two} floors $F_1, F_2$ of $\mathcal{P}_{k_0-1}$ such that $[x,y]\subset F_1\cup F_2$. In this case $F_1$ and $F_2$ have to be adjacent.  Let $f$ be their common endopoint and assume WLOG that $x\in F_1$ and $y\in F_2$.  
We can then estimate the difference we want to consider by 
$$
|\varphi(x)-\varphi(y)|\leq |\varphi(x)-\varphi(f)|+|\varphi(f)-\varphi(y)|,
$$
where now $x,f\in F_1$ and $f,y\in F_2$. Therefore, we can apply the proof in the special case considered above separately to the intervals  $[x,f]$ (i.e.~considering $y=f$ in the previous case) and $[f,y]$ (i.e.~setting $x=f$ in the previous case) respectively (in particular, applying  Proposition~\ref{prop:appr} twice and producing 
two pairs of approximating sequences, $(x_n)_n $ and $(f_n)_n$ approximating $[x,f]$ first and $(y_n)_n $ and $(f'_n)_n$ approximating $[f,y]$ then). The arguments of the previous step then give that there exists constants $C_1:= C_0(x,f)$ and $C_2:= C_0(f,y)$ such that
\begin{equation}\label{eq:halfintervalsestimates}
|\varphi(x)-\varphi(f)|\leq C_1 \lambda_1^{k(x,f)}, \qquad |\varphi(f)-\varphi(y)|\leq C_2 \lambda_1^{k(f,y)}.
\end{equation}
We now claim that, by construction,  $k(x,f)=k(f,y)=k(x,y)=k_0$. To see this, on one hand, by construction $[x,f]\subset F_1$ and  $[f,y]\subset F_2$ where $F_1,F_2$ are floors of $\mathcal{P}_{k_0-1}$, so both $k(x,f)$ and $k(f,y)$ are at least $k_0$. On the other hand, since $[x,y]$ contains at least a full floor of $\mathcal{P}_{k_0}$, but $f$ is by definition a point of the partition $\mathcal{P}_{k_0-1}$ and, therefore, also of the partition $\mathcal{P}_{k_0}$, there are actually \emph{two} full floors of $\mathcal{P}_{k_0}$ with common endopoint $f$ both contained in $[x,y]$. Thus, one of them is contained in $[x,f]$ and the other in $[f,y]$, which shows that $k(x,f)$ and $k(f,y)$ are at most $k_0$. This proves the claim.
 
Therefore,  both estimates  in \eqref{eq:halfintervalsestimates} have the same exponent $k_0$ and, summing them up,  
 we get the desired estimate (with $C:=2\max \{ C_1, C_2\}$). This concludes the proof.
\end{proof}

We can now exploit the Outline and the results proven in this section to conclude the proof of Theorem~\ref{thmb}.
\begin{proof}[Proof of Theorem~\ref{thmb}]
Assume that $T, T_0$ are as in the assumptions. 
Combining Lemma~\ref{lemma:denom} and Proposition~\ref{prop:numerator}, we can conclude, as explained in the Outline in \S~\ref{sec:strategy}, that $\varphi$ is H{\"o}lder with $\alpha=:\log \lambda_2/\log \lambda_1\in (0,1]$. This in turns implies (see again the arguments in the  Outline in \S~\ref{sec:strategy}) that 
\end{proof}

\subsection{A quantitative refinement of Theorem~\ref{thmb}.}\label{sec:quantitative}
In this last paragraph, we record a more precise result, which provides a quantitative version of Theorem \ref{thmb} and  may prove useful  for future applications, in particular to show that conjugacy classes are (locally) smooth submanifolds. 

\begin{thm}
\label{thm:quantitative}
For any $d\geq 2$, for a.e.~IET $T_0$ in $\mathcal{X}^r_d$ the following holds. Assume that $T$ is a GIET in  $\mathcal{X}^r_d$, $r\geq 3$ whose the orbit  $(\mathcal{R}^m(T))_{m\in\mathbb{N}}$ of $T$ under renormalisation converges exponentially fast, in the $\mathcal{C}^2$ distance, to the subspace $\mathcal{I}_d$ of (standard) IETs, i.e.
$$
d_{\mathcal{C}^2} (\mathcal{R}^m(T),  \mathcal{I}_d)\leq C(T) \rho^m, \qquad \, \forall m \in \N,
$$
for some $C(T)>0$ and $0<\rho<1$, then there exists $0<\alpha(T_0)<1$  such  that the conjugacy $\varphi$ between $T$ and $T_0$ is actually a diffeomorphism of $[0,1]$ of class $\mathcal{C}^{1+\alpha}$, and there exists a constant $D(T_0) > 0$ such that 
$$d_{\mathcal{C}^{1+\alpha}}(\varphi, \mathrm{Id}) \leq D(T_0) C(T).$$
\end{thm}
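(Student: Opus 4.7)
The plan is to retrace the proof of Theorem~\ref{thmb}, tracking the precise dependence of every constant on $C(T)$. The key claim is that this dependence is always \emph{linear} in $C(T)$, while the remaining constants depend only on $T_0$ (through the Diophantine-like growth of its accelerated Rauzy--Veech matrices and the geometry of its dynamical partitions). Since the symbol $\varphi$ denotes $\log Dh$ throughout Section~\ref{sec:proof} but denotes the conjugacy itself in Theorem~\ref{thm:quantitative}, let me write $\psi := \log D\varphi$ to avoid a conflict. The first step is to upgrade Lemmas~\ref{SBSviaR} and~\ref{nLdecay} to quantitative statements: since $f_k = \log D(\mathcal{R}^k T)$ vanishes identically on $\mathcal{I}_d$, a first-order expansion in the $\mathcal{C}^1$-topology yields $\|f_k\|_\infty \leq K_0 \cdot d_{\mathcal{C}^1}(\mathcal{R}^k T,\mathcal{I}_d) \leq K_0 \, C(T)\rho^k$ with $K_0$ absolute; similarly, since $\eta_{\mathcal{R}^k T} = D^2(\mathcal{R}^k T)/D(\mathcal{R}^k T)$ vanishes on $\mathcal{I}_d$, one obtains $\|\eta_{\mathcal{R}^k T}\|_\infty \leq K_1 \, C(T)\rho^k$ with $K_1$ absolute.

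Feeding these refined bounds through the arguments of Sections~\ref{sec:sameorbit} and~\ref{sec:appr}, I would obtain successively: in Proposition~\ref{prop:reminderestimate}, since $\eta_{T_n}$ enters linearly through the estimate~\eqref{toestimate}, the bound improves to $\sup_{z\in I_k}|R_k^j(f)-f_k(z)| \leq K_2(T_0) \, C(T) (\rho')^k$; in Proposition~\ref{prop:specialvariation}, the decomposition Lemma~\ref{lemma:basicestimate} together with the refined bounds on $\|f_k\|_\infty$ and on $R_k^j(f)$ gives $|\psi(z_p)-\psi(z_q)| \leq K_3(T_0) \, C(T) \, N_k(p,q) \, \rho^k$; and the telescopic bookkeeping in the proof of Proposition~\ref{prop:numerator} then yields $|\psi(x)-\psi(y)| \leq K_4(T_0) \, C(T) \, \lambda_2^{k(x,y)}$. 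Combining with Lemma~\ref{lemma:denom}, which depends only on $T_0$, produces the H\"older estimate $[\psi]_\alpha \leq D_0(T_0) \, C(T)$ for the same exponent $\alpha$ as in Theorem~\ref{thmb}.

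It then remains to convert the H\"older bound on $\psi = \log D\varphi$ into a $\mathcal{C}^{1+\alpha}$ bound on $\varphi-\mathrm{Id}$. Since $\varphi:[0,1]\to[0,1]$ fixes the endpoints, the mean value theorem provides some $x_* \in [0,1]$ with $D\varphi(x_*)=1$, hence $\psi(x_*)=0$; combined with the H\"older bound this gives $\|\psi\|_\infty \leq D_0(T_0) \, C(T)$. Exponentiating (and using $|e^s-1|\leq 2|s|$ for small $s$, absorbing any large-$C(T)$ regime into the constant, in which regime the $\mathcal{C}^{1+\alpha}$ bound is trivial) yields $\|D\varphi-1\|_\infty \leq D_1(T_0) \, C(T)$, while $[D\varphi]_\alpha \leq D_2(T_0) \, C(T)$ follows from the bound on $[\psi]_\alpha$ and the chain rule. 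Integrating, $\|\varphi-\mathrm{Id}\|_\infty \leq D_1(T_0) \, C(T)$, and summing the three contributions produces the claimed bound $d_{\mathcal{C}^{1+\alpha}}(\varphi,\mathrm{Id}) \leq D(T_0) \, C(T)$.

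The main obstacle I anticipate is the careful bookkeeping in the proof of Proposition~\ref{prop:reminderestimate}: its estimate splits into two regimes ($j\geq k/2$ and $j<k/2$), and one must check that both regimes produce constants factoring linearly through $C(T)$ uniformly. Once this verification is done, all remaining steps amount to tracking a linear dependence on $C(T)$ through the already-established chain of estimates, together with the standard H\"older-to-$\mathcal{C}^{1+\alpha}$ conversion described above.
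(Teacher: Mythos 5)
Your proposal follows exactly the paper's route: the paper proves Theorem~\ref{thm:quantitative} in one line, by observing that the proof of Theorem~\ref{thmb} "yields also the proof of Theorem~\ref{thm:quantitative} simply by recording the quantitative dependence of constants", which is precisely the constant-tracking (linearity in $C(T)$ through Lemmas~\ref{SBSviaR} and~\ref{nLdecay} and Propositions~\ref{prop:reminderestimate}--\ref{prop:numerator}, with all other constants depending only on $T_0$) that you carry out. The only ingredient you add, the normalization $\int_0^1 D\varphi =1$ (so $\log D\varphi$ vanishes somewhere) used to convert the H\"older bound on $\log D\varphi$ into the stated $\mathcal{C}^{1+\alpha}$ bound on $\varphi-\mathrm{Id}$, is the routine final step the paper leaves implicit, so your argument is essentially the paper's own.
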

\noindent 
\noindent The proof of  Theorem \ref{thmb} we just presented yield also the proof Theorem \ref{thm:quantitative} simply by recording the quantitative dependence of constants. Notice that the exponent $\alpha$ depend in principle on $T_0$ and $\rho$, but in practice, when one is able to prove such convergence of renormalisation, the exponent $\rho$ depends on $T_0$, so to all practical purpose a double-dependence of $\alpha$ on both $T_0$ and $\rho$ is redundant.


\subsection{Deduction of the other results.}\label{sec:final}
In this final short section, we briefly summarize how to deduce from Theorem~\ref{thmb} that we just proved and the results in \cite{SU} the other results stated in the introduction (i.e.~Theorem~\ref{thma} and Corollary~\ref{cor:foliationsrigidity}).
 We begin with~Theorem~\ref{thma}. 
\begin{proof}[Proof of Theorem~\ref{thma}]
Given a GIET of class $\mathcal{C}^3$ such that $\mathcal{B}(T)$ vanishes which is topologically conjugate to a standard IET in the full measure set of $\mathcal{I}_4 \cup \mathcal{I}_5$ given by Therem~\ref{thmc}, the conjugacy is differentiable and the orbit of $T$ under renormalisation convergence exponentially and satisfies \eqref{exp:conv} by Theorem~\ref{thmc}. Therefore, the assumptions of Theorem~\ref{thmb} hold and, intersecting with the full measure set of Theorem~\ref{thmb}, we get Theorem~\ref{thma}. 
\end{proof}
Before we sketch how to deduce the result on regularity of foliations rigidity (i.e.~Corollary~\ref{cor:foliationsrigidity}), let us first  briefly recall some basic definitions and refer the reader to \cite{SU} for more details (see in particular Section $6$ \S~6.3 of \cite{SU} formore  background concerning foliations, their regularity and their holonomies).

\medskip \noindent {\it Foliations and regularity.}
Let $S$ be a closed orientable smooth surface. We consider foliations on $S$ with a finite number of singularities, and we further ask that those singularities are of (possibly degenerate) \emph{saddle type}. When the saddle is simple, this means that, locally (i.e.~in a neighbourhood of $p$) there are charts for which the  {\it topological} model of the foliation is given by the level sets in $\R^2$ of the function $(x,y) \longmapsto xy$ around $0$; saddles with $2k$-prongs are locally modelled on the foliations given by level sets of the function $(x,y)\longmapsto \Im m ((x+i y )^{k})$. Let us denote by $\mathcal{F}$ the singular foliation on $S_g$ and by $Sing_\mathcal{F} \subset S$ be the finite set of (saddle-like) singular points of $\mathcal{F}$. When $S$ has genus two, by Poincar{\'e}-Hopf theorem either $Sing_\mathcal{F} $ consists of two simple saddles (each with $4$ prongs), or it consists of only one degenerate saddle with $6$ prongs ($k=3$). 
 Following Levitt \cite{Le:dec}, 
we say that the foliation $\mathcal{F}$ is of class $\mathcal{C}^r$ iff: 
\begin{enumerate}
\item[(r1)] the  leaves of $\mathcal{F}$ in $S_g \setminus Sing_\mathcal{F}$ are locally embedded $\mathcal{C}^r$-curves;
\item[(r2)] for any two smooth open transverse arcs $I$ and $J$ which are joined by leaves of $\mathcal{F}$, the holonomy map $I \longrightarrow J$ is a $\mathcal{C}^r$ diffeomorphism on its image and extends to the boundary of $I$ to a $\mathcal{C}^r$-diffeomorphism.
\end{enumerate} 
We stress that not every $\mathcal{C}^r$  (or even \emph{smooth}) vector field  gives rise to a $\mathcal{C}^r$ (or smooth) foliation, see Appendix $A.4$ of \cite{SU}.
The obstruction for a foliation to be $\mathcal{C}^r$-smooth (in the sense above) can be encoded through \emph{holonomies around singular points} (see \cite{SU} for definitions and details):  foliations are smooth in this sense  
as long as the holomomy around each singularity vanishes. Notice however that foliations defined by $\mathcal{C}^r$ vector fields with \emph{non-degenerate} critical points (which equivalently implies   exactly that the leaves in a neighbourhood of each critical points are locally defined by $\mathcal{C}^r$ \emph{Morse} functions) are automatically of class $\mathcal{C}^{r}$  in the above sense. 

\medskip \noindent {\it Measured foliations and Katok measure class.} A special and much studied class of foliations is given by \emph{measured foliations}, namely foliations endowed with an absolutely continuous transverse measure invariant by holonomy. These  include foliations whose leaves are  trajectories of linear flows on translation surfaces and more generally foliations given by  by a smooth, closed $1$-form $\eta$ such that $\eta$ vanishes at only finitely many points which are (multi)saddles (described by level sets of smooth functions near a zero of finite multiplicity).  
Katok showed in \cite{Ka:inv} that (orientable) foliations with only Morse saddles and a non-atomic invariant measure is locally determined (up to smooth isotopy fixing the set of singular points $Sing_\mathcal{F}$) by a (relative) cohomology class $\omega$ in  $\mathrm{H}^1(S_g, Sing_\mathcal{F}, \mathbb{R})$, known in the literature as 
\emph{Katok fundamental class}. 
  We can therefore endow the space of orientable measured foliations with fixed Morse-type singularities, up to isotopy, with the affine structure of $\mathrm{H}^1(S_g, Sing_\mathcal{F}, \mathbb{R}) =  \mathbb{R}^d$. 
The Lebesgue measure on $\mathbb{R}^d$ then induces a \emph{measure class} (i.e.~a notion of measure zero sets). In the statement of  Corollary~\ref{cor:foliationsrigidity}, full measure means that the complement has measure zero in this sense. 


\medskip \noindent {\it  Foliations $\mathcal{C}^{1+\alpha}$-rigidity in genus two.} We can now summarize the proof of Corollary~\ref{cor:foliationsrigidity}. Recall that the existence of a topological conjugacy as well as the notion of regularity of conjugacies between (minimal) foliations is equivalent to the existence of a topological conjugacy and the notion of regularity for their Poincar{\'e} maps.

\begin{proof}[Proof of Corollary~\ref{cor:foliationsrigidity}]  
Given a minimal orientable measured foliation on a surface $S$ of genus two, one can choose a transversals $I\subset X$ and coordinates such that 
the  corresponding first return map to $I$ are (standard) IET with $d=4$ or $d=5$ exchanged subintervals and irreducible combinatorial datum. We will call such transversals \emph{normal}\footnote{In general, the Poincar{\'e} map on a transversal can be an IET with $d, d+1 $ or $d+2$ continuity intervals, where $d=2g+\kappa-1$,  $g$ is the genus of $S$ and $\kappa$ the cardinality of $\Sing_{\mathcal{F}}$. Normal transversals (which minimize the number of exchanged intervals) are chosen so that the enpoints of $I$ belong to saddle separatrices.}.   
Consider the full measure class of IETs with $d=4$ or $d=5$ given Theorem~A. Let us say that an orientable measured foliations on $S$ belongs to $\mathcal{D}$ if there exists a normal transversal such that the IET which arise as Poincar{\'e} section  belongs to this class of IETs. Since a result holds for a full measure set of foliations with fundamental class in  $\mathrm{H}^1(S, Sing_\mathcal{F}, \mathbb{R}) =  \mathbb{R}^d$ if it holds for almost every IET with $d$ continuity interval and irreducible combinatorial datum (see e.g.~\cite{Ul:abs}), considering $d=4$ or $d=5$ (which correspond to foliations on $S$ of genus two with one or two Morse singularities respectively), we deduce from Theorem~A that $\mathcal{D}$ has full measure.  

If $\mathcal{F}$ is a foliation of class $\mathcal{C}^3$ which by assumption $(i)$ is topologically conjugated to a foliation $\mathcal{F}_0$ in $\mathcal{D}$,   by definition there exists a transverals $I\subset S$ and coordinates such that the Poincar{\'e} map of $\mathcal{F}$ to $I$
is a GIET of class $\mathcal{C}^3$ which is topologically conjugated to an IET $T_0$ which satisfy the conclusion of Theorem~A. Furthermore, the assumption $(ii)$ that the holonomies are zero imply that the boundary $\mathcal{B}(T)=0$ (see  \S~6.3 of \cite{SU} for the proof). 
Thus, we can apply Theorem~A to conclude that there exists $0<\alpha<1 $ such that $T$ and $T_0$, and hence equivalently $\mathcal{F}$ and $\mathcal{F}_0$, are 
 $\mathcal{C}^{1+\alpha}$ conjugated.
\end{proof}

\subsubsection{Acknowledgements} C.U.~is partially supported by the {\it Swiss National Science Foundation} through Grant $200021\_188617/1$. {\color{blue}}

\bibliographystyle{plain}

\end{document}